\newcommand{\Real}{\mathbb R}
\newcommand{\Torus}{\mathbb T}
\newcommand{\norm}[1]{\left\lVert #1 \right\rVert}
\newcommand{\abs}[1]{\left\vert#1\right\vert}
\newcommand{\set}[1]{\left\{#1\right\}}
\newcommand{\grad}{\nabla}
\newcommand{\jap}[1]{\langle{#1}\rangle}
\numberwithin{equation}{section}
\begin{document}  

\newtheorem{defi}{Definition}[section]  
\newtheorem{rema}{Remark}[section]  
\newtheorem{remark}{Remark}[section]  
\newtheorem{prop}{Proposition}[section]  
\newtheorem{proposition}{Proposition}[section]  
\newtheorem{lem}{Lemma}[section]  
\newtheorem{lemma}{Lemma}[section]  
\newtheorem{theo}{Theorem}[section]  
\newtheorem{cor}{Corollary}[section]  
\newtheorem{conc}{Conclusion}[section]  

\title{On the stability threshold for the 3D Couette flow in Sobolev regularity} 

\author{J. Bedrossian, P. Germain, N. Masmoudi}

\maketitle

\begin{abstract} 
We study Sobolev regularity disturbances to the periodic, plane Couette flow in the 3D incompressible Navier-Stokes equations at high Reynolds number \textbf{Re}.  
Our goal is to estimate how the stability threshold scales in $\textbf{Re}$: the largest the initial perturbation can be while still resulting in a solution that does not 
transition away from Couette flow.  
In this work we prove that initial data which satisfies $\norm{u_{in}}_{H^\sigma} \leq \delta\textbf{Re}^{-3/2}$ for any $\sigma > 9/2$ and some $\delta = \delta(\sigma) > 0$ depending only on $\sigma$, 
is global in time, remains within $O(\textbf{Re}^{-1/2})$ of the Couette flow in $L^2$ for all time, and converges to the class of ``2.5 dimensional'' streamwise-independent solutions referred to as \emph{streaks} for times $t \gtrsim \textbf{Re}^{1/3}$. 
Numerical experiments performed by Reddy et. al. with ``rough'' initial data estimated a threshold of $\sim \textbf{Re}^{-31/20}$, which shows very close agreement with our estimate. 
\end{abstract} 

\setcounter{tocdepth}{1}
{\small\tableofcontents}

\section{Introduction}
\subsection{Presentation of the problem}

We consider the three-dimensional Navier-Stokes equation with inverse Reynolds number $\nu = \textbf{Re}^{-1}> 0$ 
\begin{equation*}
\partial_t v - \nu \Delta v + v \cdot \nabla v = - \nabla p \\
\end{equation*}
set on $\mathbb{T} \times \mathbb{R} \times \mathbb{T}$, in other words $v(t,x,y,z) \in \mathbb{R}^3$ and $p(t,x,y,z) \in \mathbb{R}$ are functions of $(t,x,y,z) \in \mathbb{R}_+ \times \mathbb{T} \times \mathbb{R} \times \mathbb{T}$ (the torus $\mathbb{T}$ is the periodized interval $[0,1]$).
The simplest non-trivial stationary solution is the \emph{Couette flow} $(y,0,0)^t$. 
Despite the apparent simplicity, understanding the stability of this flow at high Reynolds number ($\nu \rightarrow 0$) is of enduring interest as a canonical, but subtle, problem in hydrodynamic stability, and has been studied regularly throughout the history of fluid mechanics (along with several variants); see e.g. \cite{Kelvin87,Romanov73,Orszag80,Tillmark92,TTRD93,ReddySchmidEtAl98,Chapman02,Liefvendahl2002} for a small representative subset or the texts \cite{DrazinReid81,SchmidHenningson2001,Yaglom12} and the references therein.    

Denoting $u$ for the perturbation of the Couette flow (that is, we set $v = (y, 0, 0)^t + u$), then it satisfies
\begin{equation}
\label{NSC}
\left\{ \begin{array}{l}
\partial_t u - \nu \Delta u + y \partial_x u +  \begin{pmatrix} u^2 \\ 0 \\ 0 \end{pmatrix} - \nabla \Delta^{-1} 2\partial_x u^2  = - u\cdot \nabla u + \nabla \Delta^{-1} (\partial_i u^j \partial_j u^i)\\ 
u(t=0) = u_{in}.
\end{array} \right.
\end{equation}
In this work, we want to answer the following question in the inviscid limit $\nu \rightarrow 0$: \textit{\textbf{Given $\sigma$, what is the smallest $\gamma>0$ such that: if the initial perturbation is such that $ \| u_{in} \|_{H^\sigma} = \epsilon < \nu^\gamma$, then $u$ remains close to the Couette flow (in a suitable sense) and converges back to the Couette flow as $t \to \infty$?}} 
Hence, the goal is not just to prove that the 3D Couette flow is nonlinearly stable in a suitable sense (this is straightforward for \eqref{NSC}) but to estimate the \emph{stability threshold} -- the size of the \emph{largest} ball around zero in $H^\sigma$ such that all solutions remain close to Couette. It is also of interest to determine the dynamics of solutions near the threshold \cite{SchmidHenningson2001}. 

\subsection{Background and previous work} 

Understanding the stability and instability of laminar shear flows at high Reynolds number has been a classical question in applied fluid mechanics since the early experiments of Reynolds \cite{Reynolds83} (see e.g. the texts \cite{DrazinReid81,SchmidHenningson2001,Yaglom12}). 
In 3D hydrodynamics, one of the most ubiquitous phenomena is that of \emph{subcritical transition}: when a laminar flow becomes unstable and transitions to turbulence in experiments or computer simulations at sufficiently high Reynolds number despite perhaps being spectrally stable.  
In fact, the flows in question can be nonlinearly asymptotically stable at all Reynolds number, despite being unstable for all practical purposes \cite{Romanov73} (see also \cite{KreissEtAl94,Liefvendahl2002}). 
It was suggested by Lord Kelvin \cite{Kelvin87} that indeed the flow may be stable, but the stability threshold is decreasing as $\nu \rightarrow 0$, resulting in transition at a finite Reynolds number in any real system. Hence, the goal is, \emph{given a norm} $\norm{\cdot}_X$, to determine a $\gamma = \gamma(X)$ such that 
\begin{align*}
\norm{u_{in}}_{X} & \lesssim \nu^\gamma \quad \Rightarrow \quad \textup{stability} \\ 
\norm{u_{in}}_{X} & \gg \nu^\gamma \quad \Rightarrow \quad \textup{possible instability}. 
\end{align*} 
Of course we do not know a priori that the stability threshold is a power law. In the applied literature, $\gamma$ is often referred to as the \emph{transition threshold}. 
The $\gamma$ is expected to depend non-trivially on the norm $X$ (as observed in, for example, the numerical experiments of \cite{ReddySchmidEtAl98}).

Many works in applied mathematics and physics have been devoted to estimating $\gamma$; see e.g. \cite{BGM15I,SchmidHenningson2001,Yaglom12} and the references therein. 
The linearized problem is non-normal and permits several kinds of transient growth mechanisms: (A) a transient un-mixing effect known as the Orr mechanism, noticed by Orr in 1907 in the context of 2D Couette flow \cite{Orr07}, (B) the 3D lift-up effect, which rearranges mean streamwise momentum to deform the shear flow away from Couette, noticed first by Ellingsen and Palm \cite{EllingsenPalm75} (see also \cite{landahl80}), (C) the transient growth of higher derivatives due to mixing, and (D) a transient vorticity stretching. 
Trefethen et. al. \cite{TTRD93} considered the implications that non-normal effects could have in the weakly nonlinear regime, in particular, forwarding the idea that the nonlinearity could repeatedly re-excite the transient growth, producing a ``nonlinear bootstrap'' scenario.
The authors of \cite{TTRD93} conjecture that $\gamma > 1$ for \eqref{NSC};  a number of works have taken these, and related, ideas further to make conjectures generally giving $1 \leq \gamma \leq 7/4$ (see e.g. \cite{Gebhardt1994,BDT95,Waleffe95,BT97,LHRS94,Chapman02}). 
Unfortunately, many of these authors do not carefully consider how the regularity of the initial data may affect the answer, despite the fact that the strength of the transient growth mechanisms is deeply tied to the regularity since the Couette flow can move information from small scales to large scales (see \S\ref{sec:lin} or \cite{BM13,BGM15I} -- in fact, the sensitivity was noted by Reynolds \cite{Reynolds83}).      
However, a few take the regularity into account, in particular Reddy et. al. \cite{ReddySchmidEtAl98}, where numerical experiments estimated $\gamma \approx 5/4$ for smooth initial data and $\gamma \approx 31/20$ for ``noisy'' data. More recent numerical experiments have since suggested $\gamma \approx 1$ for smooth data \cite{DuguetEtAl10}. 

In this paper we consider Sobolev regularity data and prove that if the initial perturbation satisfies $\norm{u_{in}}_{H^\sigma} \leq \delta\nu^{3/2}$ for $\sigma > 9/2$ and $\delta$ depending only on $\sigma$, then the solution stays within $O(\nu^{1/2})$ of the Couette flow, is attracted back to the class of $x$-independent solutions (referred to here as \emph{streaks}) for $t \gtrsim \nu^{-1/3}$, and finally converges back to equilibrium as $t \rightarrow \infty$.  
Note that this result is very closely matched by the numerical estimate $\gamma \approx 31/20$ of \cite{ReddySchmidEtAl98}; see Remark \ref{rmk:sharp} below for more discussions on regularity and the over-estimations in numerical experiments.  
The main result is stated in Theorem \ref{mainthm} below, the main bootstrap argument is set up in \S\ref{sec:Outline}, and the requisite estimates constitute the remainder of the paper.

The main stabilizing effect is the \emph{mixing-enhanced dissipation} wherein the mixing due to the Couette flow results in anomalously fast dissipation time-scales (first derived by Lord Kelvin \cite{Kelvin87}); see \S\ref{sec:lin} for more discussion or previous works such as \cite{RhinesYoung83,DubrulleNazarenko94,LatiniBernoff01,BernoffLingevitch94,Bajer2001,BeckWayne11,ConstantinEtAl08,BMV14,BCZ15} (\cite{DubrulleNazarenko94} are the first to the authors' knowledge to observe that this is important for understanding \eqref{NSC}). \emph{Inviscid damping}, first derived by Orr \cite{Orr07} in 2D and later noticed to be a hydrodynamic analogue of Landau damping (see e.g. \cite{BoydSanderson,Ryutov99,MouhotVillani11}), also plays a role in suppressing certain nonlinear effects. 

Nonlinear stability of the Couette flow in Sobolev topology has been considered previously in the case of the bounded, infinite channel, that is, $y \in [-1,1]$ and $x \in \Real$ (which can of course lead to further complications, due to the presence of boundary layers), first by Romanov \cite{Romanov73}, with later improvements by \cite{KreissEtAl94} and~\cite{Liefvendahl2002}. This last paper seems to give the best mathematically rigorous result to date for this geometry, namely $\gamma \leq 4$.
In \cite{BGM15I,BGM15II}, we study the stability threshold in Gevrey-$\alpha$ for $\alpha \in (1,2)$ for \eqref{NSC} (Gevrey class was first introduced in \cite{Gevrey18}). 
Roughly speaking, in \cite{BGM15I} we prove that $\gamma = 1$ in these topologies (consistent with the numerical results of \cite{DuguetEtAl10}) and in \cite{BGM15II} we study the dynamics of solutions which are as large as $\nu^{2/3-\delta}$.
Note that the numerical over-estimation of \cite{ReddySchmidEtAl98}, $5/4$ vs $1$, is more pronounced in Gevrey than in Sobolev; see Remark \ref{rmk:sharp}.

All previous work in fluid mechanics and kinetic theory that depend on mixing as the stabilizing mechanism in models with strong nonlinear resonances are in infinite regularity (indeed, the resonances in \eqref{NSC} are far more problematic than those in 2D Navier-Stokes/Euler \cite{BM13} or Vlasov-Poisson \cite{MouhotVillani11}). 
In this work we are looking for the boundary (in terms of $\gamma$) between when finite regularity results are possible and when infinite regularity seems to be required; see \S\ref{sec:disc} for a more in-depth discussion of the relationship between this work and previous related infinite regularity results in \cite{MouhotVillani11,BMM13,BM13,BMV14,Young14,BGM15I,BGM15II}. 
We remark that there exists some finite regularity results in certain kinetic theory models \cite{FaouRousset14,FernandezEtAl2014,Dietert2014}, however, this is possible only because the nonlinearities being studied satisfy stringent non-resonance conditions.

\subsection{Streak solutions}
The first basic property to notice about \eqref{NSC} is that it admits a wide class of so-called ``2.5 dimensional'' solutions, which are often referred to as \emph{streaks}, due to the streak-like appearance of the relatively fast fluid in experiments and computations \cite{TTRD93,SchmidHenningson2001,Trefethen2005,bottin98}.
We will see that all solutions below the threshold converge to these streak solutions for $t \gtrsim \nu^{-1/3}$ and hence these solutions describe the fully 3D nonlinear dynamics for long times.  

\begin{proposition}[Streak solutions] \label{prop:Streak} 
Let $\nu \in [0,\infty)$, $u_{in} \in H^{5/2+}$ be divergence free and independent of $x$, that is, $u_{in}(x,y,z) = u_{in}(y,z)$, and denote by $u(t)$ the corresponding unique strong solution to \eqref{NSC} with initial data $u_{in}$. 
Then $u(t)$ is global in time and for all $T > 0$, $u(t) \in L^\infty( (0,T);H^{5/2+}(\Real^3))$. 
Moreover, the pair $(u^2(t),u^3(t))$ solves the 2D Navier-Stokes/Euler equations in $(y,z) \in \Real \times \Torus$:
\begin{subequations} \label{def:2DNSEStreak} 
\begin{align} 
\partial_t u^i + (u^2,u^3)\cdot \grad u^i &  = -\partial_i p + \nu \Delta u^i, \quad\quad i \in \set{2,3} \\ 
\partial_y u^2 + \partial_z u^3 & = 0,  
\end{align} 
\end{subequations} 
and $u^1$ solves the (linear) forced advection-diffusion equation
\begin{align} 
\partial_t u^1 + (u^2,u^3)\cdot \grad u^1 = -u^2 + \nu \Delta u^1. \label{eq:u1streak}
\end{align}  
\end{proposition}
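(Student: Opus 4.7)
The plan is to exploit the fact that the class of $x$-independent divergence-free fields is invariant under \eqref{NSC}, then read off the announced decoupled system and invoke classical 2D theory.

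\textbf{Step 1: derivation of the reduced system.} Assuming $\partial_x u \equiv 0$, I compute term by term in \eqref{NSC}: the Couette transport $y\partial_x u$ vanishes; the nonlocal term $\nabla\Delta^{-1}(2\partial_x u^2)$ vanishes because $\partial_x u^2 = 0$; the convection collapses to $u\cdot\nabla u = u^2\partial_y u + u^3\partial_z u$; and in the pressure, $\partial_i u^j\partial_j u^i$ reduces to the sum over $i,j\in\{2,3\}$, which depends only on $(u^2,u^3)$. The second and third components of \eqref{NSC} then become exactly \eqref{def:2DNSEStreak} (with pressure given by the 2D Leray projection in $(y,z)$, and using $\Delta=\partial_y^2+\partial_z^2$ when acting on $x$-independent functions), while the first component reduces to \eqref{eq:u1streak}: the Couette forcing leaves the $+u^2$ term, and no pressure appears because $\partial_x$ of an $x$-independent quantity is zero.

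\textbf{Step 2: construction of the candidate $x$-independent solution.} I first solve \eqref{def:2DNSEStreak} on $(y,z)\in\Real\times\T$ with initial data $(u_{in}^2,u_{in}^3)$, which is $(y,z)$-divergence-free and in $H^{5/2+}$. For $\nu>0$ this is global well-posedness of 2D Navier--Stokes in $H^{5/2+}$; for $\nu=0$, this is global well-posedness of 2D Euler in $H^{5/2+}$ via Beale--Kato--Majda combined with the a priori bound on $\|\omega\|_{L^\infty}$ from vorticity transport. Either way $(u^2,u^3)\in L^\infty_{\mathrm{loc}}([0,\infty);H^{5/2+})$. Given this pair, \eqref{eq:u1streak} is a linear advection(--diffusion) equation with a Lipschitz drift and a forcing $-u^2\in L^\infty_{\mathrm{loc}}H^{5/2+}$; standard linear theory yields a unique global $u^1\in L^\infty_{\mathrm{loc}}([0,\infty);H^{5/2+})$ starting from $u_{in}^1$.

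\textbf{Step 3: verification and 3D uniqueness.} Set $u=(u^1,u^2,u^3)$. By construction $u$ is $x$-independent, divergence-free (since $\partial_y u^2+\partial_z u^3=0$ from the 2D continuity equation and $\partial_x u^1=0$), lies in $L^\infty_{\mathrm{loc}}([0,\infty);H^{5/2+}(\T\times\Real\times\T))$, and, by reversing the calculation of Step 1, satisfies \eqref{NSC} in the strong sense with initial data $u_{in}$. Invoking the classical uniqueness of strong solutions of \eqref{NSC} at this regularity (standard for 3D Navier--Stokes; for $\nu=0$ one uses local-in-time Euler uniqueness together with the global a priori $H^{5/2+}$ bound just built to iterate in time), this is the unique strong solution, completing the proposition.

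The proof has no serious obstacle: the one item to treat with care is the accurate bookkeeping in Step 1 of which terms in the nonlinearity and the nonlocal pressure survive under $x$-independence. All the analytic ingredients in Step 2 (global 2D NSE/Euler theory in Sobolev classes and linear transport--diffusion theory with an $H^{5/2+}$ drift) are off-the-shelf.
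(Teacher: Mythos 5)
Your proposal is correct and is essentially the paper's (omitted) argument: under $x$-independence the Couette transport, the linear nonlocal term, and all pressure/nonlinear contributions involving $\partial_x$ drop out, so the system decouples into 2D Navier--Stokes/Euler for $(u^2,u^3)$ plus the linear forced advection--diffusion equation for $u^1$, and globality follows from classical 2D theory together with uniqueness of strong solutions to \eqref{NSC}. The bookkeeping in your Step 1 and the off-the-shelf ingredients in Steps 2--3 are exactly what the proposition relies on, so there is nothing to add.
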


\subsection{Linear effects} \label{sec:lin}

Four linear effects will play a key role in the analysis to come: lift up, inviscid damping, enhanced dissipation, and vortex stretching. 
We present quickly the linearized problem, and how these four effects arise.

\subsubsection{The linearized problem} 
The linearized problem reads
\begin{align*}
\left\{ \begin{array}{l}
\partial_t u - \nu \Delta u + y \partial_x u +  \begin{pmatrix} u^2 \\ 0 \\ 0 \end{pmatrix} - \nabla \Delta^{-1} 2 \partial_x u^2  =0\\
u(t=0) = u_{in}.
\end{array} \right.
\end{align*} 
Switch to the independent variables $(\overline{x},y,z) = (x-ty,y,z)$ by setting $\overline{u}(t,\overline{x},y,z) = u(t,x,y,z)$; it solves
\begin{align} \label{eq:linbaru} 
\left\{ \begin{array}{l}
\partial_t  \overline u - \nu \Delta_L  \overline u +  \begin{pmatrix}  \overline u^2 \\ 0 \\ 0 \end{pmatrix} - \nabla_L \Delta_L^{-1} 2 \partial_{ \overline x}  \overline u^2  =0\\
 \overline u(t=0) =  \overline u_{in}.
\end{array} \right.
\end{align} 
where $\nabla_L = (\partial_{\overline{x}},\partial_y - t \partial_{\overline{x}},\partial_z)$, and $\Delta_L = \nabla_L \cdot \grad_L$.

\subsubsection{Lift up} Consider first the projection on zero frequencies of the above equation (for a function $f(t,x,y,z)$, we denote $f_0 (y,z)= \int f(x,y,z) \,dx$). 
Note that $\bar{u}_0 = u_0$ and hence it reads
\begin{equation*}
\left\{ \begin{array}{l}
 \partial_t u_0 - \nu \Delta u_0 = - \begin{pmatrix}  u^2_0 \\ 0 \\ 0 \end{pmatrix} \\
 u_0(t=0) =  (u_{in})_0.
\end{array} \right.
\end{equation*}
The solution of this linear problem is given by
$$
u = \left( \begin{array}{c} e^{\nu t \Delta} \left[ (u^1_{in})_0 - t (u^2_{in})_0 \right] \\ e^{\nu t \Delta}(u^2_{in})_0 \\ e^{\nu t \Delta}(u^3_{in})_0 \end{array} \right)
$$
The linear growth predicted by this formula for times $t \lesssim \frac{1}{\nu}$ is known as the \textit{lift up effect}, and was first noticed by Ellingsen and Palm \cite{EllingsenPalm75} (see also \cite{landahl80}). 
This non-normal transient growth turns out to be a primary source of instability in \eqref{NSC} for small data -- note also that this effect is not present in 2D due to the vanishing of $u_0^2$ by incompressibility in that case.   
For smooth data of size $\epsilon$, we can expect at best the bounds, 
$$
\| u_0^1 \|^2_{L^\infty H^s} + \nu \| u_0^1 \|^2_{L^2 H^s} \lesssim \left( \frac{\epsilon}{\nu}\right)^2.
$$

\subsubsection{Inviscid damping} 
Turning now to non zero frequencies in $\bar{x}$, denoted for a function $f(\bar{x},y,z)$ by $f_{\neq} = f - f_0$, observe that the linearized problem satisfied by $\overline q^2_{\neq} = \Delta_L \overline u^2_{\neq}$ reads
\begin{equation} \label{eq:q2lin}
\left\{ \begin{array}{l}
\partial_t  \overline q^2 - \nu \Delta_L  \overline q^2 = 0 \\
\overline q^2 (t=0) = \overline q^2_{in}.
\end{array} \right.
\end{equation}
For smooth data of size $\epsilon$, this gives a global bound on $q^2$ of order $\epsilon$. 
This unknown was first introduced by Kelvin \cite{Kelvin87}, and is often used when studying the stability of parallel shear flows (see e.g. \cite{Chapman02,SchmidHenningson2001}).   
The velocity field can be recovered by the formula $ \bar{u}^2_{\neq} = \Delta_L^{-1} \overline q^2_{\neq}$, or, in Fourier  (denoting $k$, $\eta$, $l$ for the dual variables of $\bar{x}$, $y$, $z$ respectively)
\begin{align}
\widehat{ \overline u^2_{\neq}} = \frac{1}{k^2 + (\eta - kt)^2 + l^2} \widehat{ \overline q^2_{\neq}}.\label{eq:u2lin}
\end{align} 
Due to the bound $ \frac{1}{k^2 + (\eta - kt)^2 + l^2} \lesssim \frac{\jap{\eta}^2}{\jap{kt}^2}$, this leads to a decay estimate of the type
\begin{align} 
\| \overline u^2_{\neq} \|_{H^s} \lesssim \frac{1}{t^2}\norm{\overline{q}^2_{\neq}}_{H^{s+2}}. \label{ineq:LinID}
\end{align} 
This decay mechanism is known as \textit{inviscid damping}; indeed, notice that the decay rate is independent of $\nu$ and is true also for the linearized 3D Euler equations. For the nonlinear problem, we will mostly depend on $L^2_t H^s_x$ estimates, in which case we can expect estimates such as 
\begin{align} 
\| t \overline u^2_{\neq} \|_{L^2 H^s} \lesssim \epsilon
\end{align} 
($\epsilon$ standing for the size of the data). 
The regularity loss in \eqref{ineq:LinID} is required to control the transient growth in \eqref{eq:u2lin} for $\eta k > 0$; modes that are tilted against the shear and are subsequently un-mixed to large scales before being mixed to small scales. This non-normal effect was pointed out by Orr \cite{Orr07} in 2D, however, it will remain important in 3D.  
Orr referred to the time $t= \eta/k$ as the \emph{critical time}, a terminology we also use below.

\subsubsection{Enhanced dissipation}
In order to understand enhanced dissipation better, consider the model scalar problem, such as that solved by $\bar{q}^2$ above in \eqref{eq:q2lin}, 
\begin{equation*}
\left\{ \begin{array}{l}
\partial_t  w_{\neq} - \nu \Delta_L  w_{\neq} = 0 \\
w_{\neq} (t=0) =  (w_{in})_{\neq}.
\end{array} \right.
\end{equation*}
Taking the Fourier transform, the problem can be recast as
\begin{equation*}
\left\{ \begin{array}{l}
\partial_t  \widehat{w_{\neq}} - \nu (k^2 + (\eta- kt)^2 + l^2) \widehat{w_{\neq}} = 0 \\
\widehat{w_{\neq}} (t=0) =\widehat{(w_{in})_{\neq}}.
\end{array} \right.
\end{equation*}
Thus $ \widehat{w_{\neq}}(t,k,\eta,l) = e^{-\nu \int_0^t  (k^2 + (\eta- k\tau)^2 + l^2) d\tau}\widehat{ (w_{in})_{\neq}}$. Due to the inequality $\int_0^t  (k^2 + (\eta- k\tau)^2 + l^2) d\tau \gtrsim t^3$, for the linear problem we get the decay
$$
\| w_{\neq} \|_{H^s} \lesssim \epsilon e^{-c\nu t^3}.
$$
This decay is much faster than the standard viscous dissipation, indeed, the characteristic time scale for dissipation in non-zero-in-$x$ modes is order $\sim \nu^{-1/3}$ instead of $\nu^{-1}$. 
We refer to this phenomenon as \textit{enhanced dissipation}; as mentioned above, it has been studied in several contexts previously, see e.g. \cite{RhinesYoung83,DubrulleNazarenko94,LatiniBernoff01,ConstantinEtAl08,BeckWayne11,BMV14,BCZ15}.   
In this work, we will use $L^2$ time-integrated estimates of the type 
$$
\| w_{\neq} \|_{L^2 H^s} \lesssim \frac{\epsilon}{\nu^{1/6}} \quad \mbox{and} \quad \| tw_{\neq} \|_{L^2 H^s} \lesssim \frac{\epsilon}{\sqrt \nu}.
$$

\subsubsection{Vorticity stretching and kinetic energy cascade} 
The control of $\bar{q}^2$ provides the rapid decay of $\bar{u}^2$ via inviscid damping, which can then be integrated to understand the evolution of $\bar{u}^1$ and $\bar{u}^3$ in 
 \eqref{eq:linbaru}. In particular, we see that for times $1 \ll t \ll \nu^{-1/3}$, $\bar{u}^{1,3}_{\neq}$ are essentially time-independent, and hence over these times $u^{1,3}_{\neq}$ are being mixed like a passive scalar by the Couette flow. Hence, over these time scales we see a forward cascade of \emph{kinetic energy} (this persists on the nonlinear level as well \cite{BGM15I}).   
Due to the negative order of the Biot-Savart law, it is easy to see that a forward cascade of kinetic energy is only possible if there is an accompanying vorticity stretching; this can also be confirmed by studying \eqref{eq:linbaru} in vorticity form.  

Finally, we summarize the linear behavior here. 
 \begin{proposition}[Linearized Navier-Stokes] \label{prop:LinNSE} 
Let $u_{in}$ be a divergence free, smooth vector field. The solution to the linearized Navier-Stokes $u(t)$ with initial data $u_{in}$ satisfies the following for some $c \in (0,1/3)$
\begin{subequations} 
\begin{align} 
\norm{\bar{u}^{2}_{\neq}(t)}_{H^\sigma}  & \lesssim \jap{t}^{-2} e^{-c\nu t^3} \norm{u^2_{in}}_{H^{\sigma+2}} \label{ineq:U2LinearID_vs} \\ 
\norm{\bar{u}^{1,3}_{\neq}(t)}_{H^\sigma} & \lesssim e^{-c\nu t^3}  \norm{u_{in}}_{H^{\sigma+7}},   \label{ineq:U1LinearID_vs}
\end{align} 
\end{subequations} 
and the formulas
\begin{subequations} \label{def:NSEstreak}
\begin{align}
u^1_0(t,y,z) & = e^{\nu t \Delta}\left(u^1_{in \; 0} -  t u_{in \; 0}^2\right) \label{eq:liftup_vs} \\ 
u^2_0(t,y,z) & = e^{\nu t \Delta} u^2_{in \; 0} \\
u^3_0(t,y,z) & = e^{\nu t \Delta} u^3_{in \; 0}. 
\end{align}
\end{subequations}
\end{proposition}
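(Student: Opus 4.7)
For the zero-in-$x$ mode, projecting \eqref{eq:linbaru} onto $k = 0$ kills the pressure correction $\nabla \Delta^{-1} 2 \partial_x \bar u^2$ (whose Fourier symbol carries a factor of $ik$), so $u_0$ decouples: $u_0^2$ and $u_0^3$ solve pure heat equations $\partial_t u_0^{2,3} = \nu\Delta u_0^{2,3}$, while $u_0^1$ solves the forced heat equation $\partial_t u_0^1 - \nu \Delta u_0^1 = -u_0^2$. Hence $u_0^{2,3}(t) = e^{\nu t\Delta}(u_{in})_0^{2,3}$, and Duhamel for $u_0^1$ combined with the commutativity of the heat semigroup with itself gives
\[
u_0^1(t) = e^{\nu t \Delta}(u_{in})_0^1 - \int_0^t e^{\nu(t-s)\Delta} e^{\nu s \Delta} (u_{in})_0^2 \, ds = e^{\nu t\Delta}\bigl((u_{in})_0^1 - t (u_{in})_0^2\bigr),
\]
which is exactly \eqref{def:NSEstreak}.

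For the nonzero-in-$\bar x$ modes, the central device is the Kelvin unknown $\bar q^2 = \Delta_L \bar u^2$. A direct Fourier-side computation---extracting the second component of \eqref{eq:linbaru} and then differentiating $\widehat{\bar q^2} = -(k^2 + (\eta-kt)^2 + l^2)\widehat{\bar u^2}$ in $t$---shows that the pressure and zeroth-order lift-up contributions exactly cancel, so $\bar q^2$ solves the clean equation \eqref{eq:q2lin}. Fourier inversion then yields
\[
\widehat{\bar q^2_{\neq}}(t, k, \eta, l) = \exp\!\Bigl(-\nu \int_0^t (k^2 + (\eta - k\tau)^2 + l^2)\,d\tau\Bigr) \widehat{q^2_{in,\neq}}(k, \eta, l),
\]
and the elementary identity $\int_0^t (\eta - k\tau)^2 \, d\tau = k^2 \int_0^t (\tau - \eta/k)^2 d\tau \geq k^2 t^3/12$ (from a case analysis on whether $\tau^* := \eta/k$ lies in $[0, t]$), combined with $|k| \geq 1$ on the torus, produces the enhanced-dissipation factor $e^{-c\nu t^3}$ for $\bar q^2_{\neq}$ in every Sobolev norm.

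To recover $\bar u^2_{\neq}$ I will invert $\Delta_L$ via $\widehat{\bar u^2_{\neq}} = -(k^2 + (\eta-kt)^2 + l^2)^{-1} \widehat{\bar q^2_{\neq}}$ and apply the pointwise multiplier bound $(k^2 + (\eta-kt)^2 + l^2)^{-1} \lesssim \jap{\eta}^2/\jap{kt}^2$, obtained by splitting on whether $|\eta - kt| \leq |kt|/2$ as sketched in the excerpt; this converts the enhanced dissipation into algebraic $\jap{t}^{-2}$ decay at a fixed cost of $\eta$-derivatives, and combining with $\widehat{q^2_{in}} = -(k^2 + \eta^2 + l^2)\widehat{u^2_{in}}$ gives \eqref{ineq:U2LinearID_vs}. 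For $\bar u^{1,3}_{\neq}$, extracting the corresponding components of \eqref{eq:linbaru} exhibits each as a heat equation in $\nu\Delta_L$ with source linear in $\bar u^2_{\neq}$ (via the lift-up term and the pressure $\partial_{\bar x,z}\Delta_L^{-1} 2\partial_{\bar x}\bar u^2$); inserting the decay just established into a Duhamel representation against the associated time-dependent semigroup then produces \eqref{ineq:U1LinearID_vs}. The only real subtlety is bookkeeping: the $\jap{\eta}^2/\jap{kt}^2$ multiplier plus the additional inverse powers of $\Delta_L$ in the pressure, together with the Duhamel time-integration, account for the $+2$ and $+7$ derivative losses, but no hard analytic step is required once $\bar q^2$ has been identified as the right variable.
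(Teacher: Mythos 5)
Your route is the same as the paper's implicit one (the proposition just packages the linear analysis of \S\ref{sec:lin}): the zero-mode projection killing the pressure and giving \eqref{def:NSEstreak} is correct, the verification that $\bar q^2=\Delta_L\bar u^2$ solves \eqref{eq:q2lin} is correct (one terminological slip: the cancellation there is between the pressure term and the time-derivative of the symbol of $\Delta_L$, i.e.\ the linear stretching; the lift-up term does not appear in the $\bar u^2$ equation at all), the bound $\int_0^t(\tau-\eta/k)^2\,d\tau\geq t^3/12$ is right, and your Duhamel argument for $\bar u^{1,3}_{\neq}$ is sound since the sources $-\bar u^2+2\partial_{\bar x}^2\Delta_L^{-1}\bar u^2$ and $2\partial_{z}\partial_{\bar x}\Delta_L^{-1}\bar u^2$ have symbols bounded by a constant times $|\widehat{\bar u^2}|$, and $e^{-c\nu(t-s)^3}e^{-c\nu s^3}\leq e^{-c\nu t^3/4}$ lets you keep the stretched-exponential factor; the resulting loss sits comfortably inside the $\sigma+7$ of \eqref{ineq:U1LinearID_vs}.

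The genuine gap is the derivative count in your last step for \eqref{ineq:U2LinearID_vs}. Your own chain loses $\jap{\eta}^2$ from the multiplier bound $(k^2+(\eta-kt)^2+l^2)^{-1}\lesssim \jap{\eta}^2\jap{kt}^{-2}$ \emph{and} a further factor $k^2+\eta^2+l^2$ from converting $q^2_{in}$ into $u^2_{in}$, so what it proves is $\norm{\bar u^2_{\neq}(t)}_{H^\sigma}\lesssim \jap{t}^{-2}e^{-c\nu t^3}\norm{u^2_{in}}_{H^{\sigma+4}}$, not the stated $H^{\sigma+2}$; the sentence ``combining with $\widehat{q^2_{in}}=-(k^2+\eta^2+l^2)\widehat{u^2_{in}}$ gives \eqref{ineq:U2LinearID_vs}'' does not follow. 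Moreover, no refinement of the pointwise multiplier argument can reach $\sigma+2$ with constants uniform in $\nu$: the evolution is diagonal in frequency, and at $k=1$, $l=0$, $\eta=t$ the solution symbol is $(1+t^2)\,e^{-\nu(t+t^3/3)}$ while the claimed bound supplies only $\jap{t}^{-2}e^{-c\nu t^3}\jap{(1,t,0)}^{2}\approx e^{-c\nu t^3}$, so one would need $\jap{t}^{2}\lesssim e^{(1/3-c)\nu t^3}$, which fails for $1\ll t\ll\nu^{-1/3}$ -- exactly the Orr transient growth the paper flags after \eqref{ineq:LinID}. So you must either prove (and state) the estimate with $\norm{u^2_{in}}_{H^{\sigma+4}}$, or phrase it in terms of $\norm{q^2_{in}}_{H^{\sigma+2}}=\norm{\Delta u^2_{in}}_{H^{\sigma}}$ as \eqref{ineq:LinID} does, and note explicitly that this still suffices for the $\bar u^{1,3}$ bound; as written, the $u^2$ part of your proof asserts more than the argument delivers.
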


\subsection{Statement of results}
We now state our main results. 
\begin{theo} \label{mainthm}
For all $\sigma > 9/2$, there exists $\delta = \delta(\sigma)$ such that: if $\nu \in (0,1)$ and $u_{in}$ is divergence free with   
\begin{align} 
\epsilon = \| u_{in} \|_{H^{\sigma}} < \delta \nu^{3/2}, \label{ineq:smallness}
\end{align}
then the resulting strong solution to \eqref{NSC} is global in time and there exists a function $\psi(t,y,z)$ satisfying 
$$
\| \psi \|_{L^\infty H^{\sigma}}^2 + \nu \| \nabla \psi \|_{L^2 H^{\sigma}}^2 \lesssim \frac{\epsilon^2}{\nu^2},
$$
such that, denoting by $U^i$, $i\in \set{1,2,3}$ the velocity field $u^i$ in the new coordinates
$$
U^i (t,x-ty-t\psi(t,y,z),y+\psi(t,y,z),z) = u^i(t,x,y,z),
$$
the solution $u(t)$ to ~\eqref{NSC} with initial data $u_{in}$ is global in time and satisfies the following estimates: 
\begin{subequations}
\begin{align}
& \norm{u_0^1}_{L^\infty H^{\sigma}} +   \sqrt{\nu} \norm{\grad u^{1}_0}_{L^2 H^{\sigma}} \lesssim \frac{\epsilon}{\nu} \\ 
& \norm{u_0^{2,3}}_{L^\infty H^\sigma} + \sqrt{\nu} \norm{\grad u^{2,3}_0}_{L^2 H^{\sigma}} \lesssim \epsilon & \\
&  \norm{U^{2}_{\neq}}_{L^\infty H^{\sigma-2}} + \norm{\grad_L U^{2}_{\neq}}_{L^2 H^{\sigma-3}} + \norm{t U^{2}_{\neq}}_{L^2 H^{\sigma-4}} \lesssim \epsilon \label{ineq:U2ID} \\ 
& \norm{U^1_{\neq}}_{L^\infty H^{\sigma-3}} + \sqrt{\nu}\norm{t U^{1}_{\neq}}_{L^2 H^{\sigma-4}}  \lesssim \epsilon \label{ineq:U1ED} \\ 
& \norm{U^3_{\neq}}_{L^\infty H^{\sigma-2}} + \sqrt{\nu}\norm{t U^{3}_{\neq}}_{L^2 H^{\sigma-3}}  \lesssim \epsilon \label{ineq:U3ED} 
\end{align}
\end{subequations} 
\end{theo}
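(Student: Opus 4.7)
The plan is to set up a bootstrap argument on a maximal interval $[0,T^\ast)$ where the estimates \eqref{ineq:U2ID}--\eqref{ineq:U3ED} together with the zero-mode bounds and the bound on $\psi$ are assumed to hold with a large constant (say $4C$), and to show that the hypotheses allow one to recover the same bounds with constant $2C$. By continuity this forces $T^\ast=\infty$ and yields the theorem. The smallness \eqref{ineq:smallness} with $\epsilon<\delta\nu^{3/2}$ will enter precisely when it is needed to beat the worst nonlinear product, namely the lift-up growth $\epsilon/\nu$ combined with an enhanced-dissipation factor $\nu^{-1/6}$ coming from the non-zero modes.

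Before running the bootstrap I would build the coordinate change implicit in the statement. The function $\psi(t,y,z)$ is chosen so that $y+\psi$ is a Lagrangian coordinate for the streak-corrected shear; concretely one solves a scalar advection--diffusion equation of the form $\partial_t\psi + (\text{streak drift})\cdot\grad\psi - \nu\Delta\psi = u_0^2$, which by Proposition \ref{prop:Streak}-type estimates gives $\|\psi\|_{L^\infty H^\sigma}^2+\nu\|\grad\psi\|_{L^2 H^\sigma}^2\lesssim\epsilon^2/\nu^2$. Rewriting \eqref{NSC} in the new variables $(X,Y,z)=(x-ty-t\psi,\,y+\psi,\,z)$ replaces $\nabla_L$ by a deformed gradient $\nabla_L^\psi$, and absorbs the leading mean--fluctuation coupling into the change of frame. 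Inside this frame I would pass to two good unknowns: the Kelvin variable $Q^2:=\Delta_L^\psi U^2$, whose linear evolution is pure transport--diffusion (cf.\ \eqref{eq:q2lin}), and the streamwise vorticity $Q^3:=\partial_z U^1-\partial_X U^3$, which avoids the worst vortex-stretching terms when estimating $U^{1,3}_{\neq}$.

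The energy estimates then split into four groups. For $u_0^{2,3}$ one uses the 2D Navier--Stokes structure of Proposition \ref{prop:Streak} plus quadratic forcing from the non-zero modes, and closes an $H^\sigma$ estimate giving the $O(\epsilon)$ bound. For $u_0^1$ the lift-up source $-u_0^2$ produces the $\epsilon/\nu$ bound as in \eqref{eq:liftup_vs}, with nonlinear corrections controlled by the bootstrap. For $U^2_{\neq}$ one runs an $H^{\sigma-1}$ estimate on $Q^2$ with the full $\nu|\nabla_L^\psi|^2$ dissipation, and recovers \eqref{ineq:U2ID} from the Biot--Savart formula \eqref{eq:u2lin}, which converts $Q^2$ bounds into weighted-in-$t$ bounds on $U^2_{\neq}$ via the $\langle kt-\eta\rangle^{-2}$ decay. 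For $U^{1,3}_{\neq}$ one estimates $Q^3$ together with incompressibility, and uses the integrated enhanced-dissipation bounds $\|w_{\neq}\|_{L^2 H^s}\lesssim\epsilon/\nu^{1/6}$ and $\|tw_{\neq}\|_{L^2 H^s}\lesssim\epsilon/\sqrt\nu$ to obtain \eqref{ineq:U1ED}--\eqref{ineq:U3ED}.

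The main obstacle is closing the nonlinear couplings at the finite regularity $\sigma>9/2$. The serious terms are: the transport of $U^1_{\neq}$ by $U^2_{\neq}$ against $\partial_Y u_0^1$ (which pairs the large lift-up zero mode with the non-zero sector), the pressure contributions $\nabla_L^\psi (\Delta_L^\psi)^{-1}(\partial_i u^j \partial_j u^i)$ near the Orr critical times $t\approx\eta/k$, and the forcing of the 2D streak equations by quadratic $U_{\neq}$ terms. The two-derivative loss inherent to inviscid damping \eqref{ineq:LinID} forces the staircase of regularities ($U^2_{\neq}\in H^{\sigma-2}$, $U^3_{\neq}\in H^{\sigma-2}$, $U^1_{\neq}\in H^{\sigma-3}$) appearing in the statement, and the extra derivatives spent on $\psi$ and on the deformed Biot--Savart account for the gap between $\sigma$ and $9/2$. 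At the end of the day the worst nonlinear estimate is of the schematic form $(\epsilon/\nu)\cdot(\epsilon/\nu^{1/6})\lesssim\epsilon$, which requires $\epsilon\lesssim\nu^{7/6}$ at the roughest step; tightening this using the $\sqrt{\nu}\|tU^{1,3}_{\neq}\|_{L^2}$ bounds is what yields the threshold $\epsilon<\delta\nu^{3/2}$ and is where I expect the bulk of the technical work to lie.
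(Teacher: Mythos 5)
Your overall scaffolding (a bootstrap on coupled zero-mode/non-zero-mode estimates, a streak-adapted change of coordinates, the Kelvin unknown $\Delta$-of-$U^2$, and the lift-up/inviscid-damping/enhanced-dissipation bookkeeping) matches the paper's strategy, but there are concrete gaps that would prevent the argument from closing. First, your choice of the second good unknown is the classical Squire-type vorticity $\partial_zU^1-\partial_XU^3$; the paper deliberately rejects exactly this coupling because the change of independent variables needed to remove the $O(\epsilon\nu^{-1})$ streak mixing destroys its structure, and instead works with $Q^i=\Delta_t U^i$ for all three components (see \S\ref{COV}). Relatedly, your equation for $\psi$ is not the one that does the job: $\psi$ must absorb the streak itself, i.e. it solves \eqref{def:psi} with forcing $u_0^1-tu_0^2$ (this is why $\psi$ is as large as $\epsilon\nu^{-1}$), not an advection--diffusion equation forced by $u_0^2$. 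Second, and most seriously, you have no mechanism for the competition between the linear stretching term $2\partial_{XY}\Delta_L^{-1}$ in the $Q^1,Q^3$ equations and the dissipation near the Orr critical times: for $0<t-\eta/k\lesssim\nu^{-1/3}$ the stretching beats $\nu\Delta_L$, and at finite Sobolev regularity one cannot absorb this transient growth by paying regularity as in the Gevrey works. The paper's entire finite-regularity machinery is the multiplier $m$ (which is \emph{not} $O(1)$, being bounded below only by $\nu^{2/3}$) together with $M^0,M^1,M^2$ of \S\ref{sec:FM}, and in particular Lemma \ref{lem:postmultED}, which converts the $\dot M^2$ term into $L^2$-in-time enhanced dissipation \emph{without} regularity loss; simply invoking the linear decay heuristics $\|w_{\neq}\|_{L^2H^s}\lesssim\epsilon\nu^{-1/6}$ inside a nonlinear energy estimate does not produce these bounds.

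Third, the endgame for the $\nu^{3/2}$ threshold is hand-waved. The sharp smallness does not come from ``tightening'' a $(\epsilon/\nu)(\epsilon/\nu^{1/6})$ product; it comes from specific borderline terms, above all the pressure interaction $NLP(1,3,0,\neq)=\partial_Y^t(\partial_Z^tU_0^1\,\partial_XU^3_{\neq})$ in the $Q^2$ equation and the stretching error $LS_2$ in the $Q^3$ equation, each of which evaluates to $\epsilon^3\nu^{-3/2}$ against an $\epsilon^2$ energy. Handling the former requires the paper's two-tier scheme for $Q^2$: an $H^N$ estimate weighted by $m^{1/2}$ (allowing growth near the critical time until dissipation takes over) together with a uniform $H^{N-1}$ estimate \eqref{boundQ2low} that restores enough inviscid damping of $U^2$ to serve as a null structure in the remaining nonlinear terms. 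You correctly flag this pressure interaction as dangerous, but you do not supply the device that controls it, and without it (or an equivalent) the bootstrap constants cannot be improved at $\epsilon\sim\nu^{3/2}$.
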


\begin{rema} 
The latter terms in \eqref{ineq:U1ED} and \eqref{ineq:U3ED} emphasize the effect of enhanced dissipation, discussed above in \S\ref{sec:lin}. In particular, the scaling of the $L^2H^{\sigma-4}$ norm of $tU^i_{\neq}$ is far better at small $\nu$ than what would be true of the heat equation.  
The second two estimates in \eqref{ineq:U2ID} emphasize the effect of inviscid damping: notice indeed that the decay does not depend on $\nu$. 
\end{rema}

\begin{rema} \label{rmk:sharp}
 How optimal are the assumptions of the theorem?
\begin{itemize} 
\item As mentioned previously, numerics in ~\cite{ReddySchmidEtAl98} estimated a threshold for ``noisy data" at $\epsilon \sim \nu^{31/20}$; Theorem \ref{mainthm} shows that the stability threshold is slightly better. In light of the numerical evidence, it is reasonable to conjecture that Theorem \ref{mainthm} is sharp in terms of $\gamma$ over some range of Sobolev spaces.  
 
\item By parabolic smoothing, it should be possible to slightly weaken \eqref{ineq:smallness} to something like: $u_{in} = u_S +u_R$ with $\norm{u_S}_{H^{9/2+}} + C\nu^{\frac{9}{4} - \frac{\alpha}{2}}\norm{u_R}_{H^{\alpha}} < \delta \nu^{3/2}$ for a universal $C$ at least over some range of $\alpha \in (5/2,9/2)$.
 This is a local-in-time effect which is totally independent of Theorem \ref{mainthm} (though it may be a non-trivial refinement of the local theory for \eqref{NSC}). 
This is qualitatively consistent with the numerical over-estimation observed in \cite{ReddySchmidEtAl98} and others: numerical algorithms will inevitably introduce noise at the smallest scales of the simulation and hence possibly over-estimate $\gamma$ -- indeed, more recent computations carried out in \cite{DuguetEtAl10} are closer to the $\gamma \approx 1$ in the case of smooth data.   
This also suggests that the Sobolev regularity $\gamma$ is more robust to low-regularity noise than the infinite regularity $\gamma$ (which requires exponentially small noise \cite{BGM15I}), which is consistent with the mentioned numerical observations.   
\end{itemize}
\end{rema}

\subsection{Brief discussion of the results and new ideas} \label{sec:disc}
Our work shows that it may now be feasible to build a mathematical theory of subcritical instabilities in fluid mechanics and possibly also in related fields, such as magneto-hydrodynamics. This seems especially possible in finite regularity, as the methods here are significantly more tractable than those in infinite regularity \cite{BGM15I,BGM15II}.  
Indeed, in the proof of Theorem \ref{mainthm}, we need to use methods which differ significantly from those used in the infinite regularity works \cite{MouhotVillani11,BMM13,BM13,BMV14,Young14,BGM15I,BGM15II}. 
In all of these previous works, the infinite regularity class is used to absorb the potential frequency cascade due to weakly nonlinear effects in a process related to classical Cauchy-Kovalevskaya-type arguments in e.g. \cite{Nirenberg72,Nishida77,FoiasTemam89,LevermoreOliver97} (see \S\ref{sec:FM} for more precise discussions) or, in the case of \cite{MouhotVillani11}, via a Nash-Moser-type iteration. 
Here this is clearly not an option, and hence we need to rule out any such cascade with the \emph{least possible} amount of dissipation; something which will require a different kind of understanding of the weakly nonlinear effects in the pressure and a more precise understanding of the interplay between the enhanced dissipation and vortex stretching. 
The starting point for this is the linear analysis of \S\ref{sec:lin}, and based on this, Fourier multipliers which precisely encode the interplay between the dissipation and possible growth are designed. These multipliers are then used to make energy estimates which lose the minimal amount of information from the linear terms; see \S\ref{sec:FM} for specifics and context with existing ideas in e.g. \cite{FoiasTemam89,Alinhac01,BM13} and others (in particular, we need multipliers which more precisely capture the effect of dissipation than in \cite{BGM15I,BGM15II}). 

Once we have understood and quantified the linear terms, one needs to understand how this linear behavior interacts with the nonlinearity.  
For this, of critical importance in the proof is the precise structure of the nonlinearity, which contains a number of null structures. 
Similar to null forms for quasilinear wave equations, introduced in \cite{Klainerman}, the null structures encountered in the present paper cancel possible interactions between large modes or derivatives of the solution. The simplest is that the nonlinearity in \eqref{NSC} does not allow $u^1_0$ to directly interact with itself in a nonlinear way (this is essentially how Proposition \ref{prop:Streak} works) -- however a similar structure also limits the way $u^1_{\neq}$ and $u_0^1$ interact. 
Another slightly more subtle structure is that, since the nonlinearity is comprised of forms of the type $u^j \partial_j u^i$, the large growth of $y$ derivatives is 
crucially counter-balanced by the inviscid damping of $u^2$ in nonlinear terms. Indeed, this is why quantifying the inviscid damping of $u^2$ is important for the proof to work. Similarly, the $u^1 \partial_x$ and $u^3 \partial_z$ structure pairs less problematic derivatives with the more problematic $u^{1,3}$. 
Since the inviscid damping is important, a key physical mechanism to understand is how the streak and the kinetic energy cascade interact nonlinearly in the $y$ derivative of the pressure, that is, the nonlinear term: $-\partial_y \Delta^{-1}\left(\partial_zu_0^1 \partial_x u^3_{\neq}\right)$. 
Controlling this term is one of the main challenges, which is done in \S\ref{sec:NLPQ2}, and in it, \emph{all} of the linear effects outlined in \S\ref{sec:lin} are playing a role (which is why it is very important that these are treated precisely). 
See \S\ref{sec:Outline} below for more details on the proof and techniques. 

\section{Preliminaries and outline of the proof} \label{sec:Outline}
\subsection{Notations}

\subsubsection{Miscellaneous} Given two quantities $A$ and $B$, we denote $A \lesssim B$ if there exists a constant $C$ such that $A \leq CB$. This constant might depend on $\sigma$, but not on $\delta$, $\nu$, $C_0$ or $C_1$ (the two latter quantities remain to be defined). 
Finally, we write $\langle x \rangle = \sqrt{1+ x^2}$.

\subsubsection{Fourier Analysis}
The Fourier transform of a function $f(X,Y,Z)$, denoted $\widehat{f}(k,\eta,\ell)$ or $\mathcal{F} f$, is such that
\begin{align*}
& f(X,Y,Z) = \sum_k \int_{\eta \in \mathbb{R}} \sum_\ell \widehat{f}(k,\eta,\ell) e^{2\pi i(kX + \eta Y + \ell Z)} \,d\eta\\
& \widehat{f}(k,\eta,\ell) =  \int_{X \in \mathbb{T}} \int_{Y \in \mathbb{R}} \int_{Z \in \mathbb{T}} f(X,Y,Z) e^{-i 2\pi (kX + \eta Y + \ell Z)} \,dX\,dY\,dZ.
\end{align*}
The Fourier multiplier with symbol $m(k,\eta,\ell)$ is such that
$$
m(D) f = \mathcal{F}^{-1} m(k,\eta,\ell) \mathcal{F} f.
$$
The projections on the zero frequency in $X$ of a function $f(X,Y,Z)$ is denoted by
$$
P_0 f = f_0 = \int f(X,Y,Z)\,dX
$$
while
$$
P_{\neq} f = f_{\neq} = f - P_0 f.
$$

\subsubsection{Functional spaces}
The Sobolev space $H^N$ is given by the norm
$$
\| f \|_{H^N} = \| \langle D \rangle^N f \|_{L^2}.
$$
Recall that, for $s>\frac{3}{2}$, $H^s$ is an algebra: $\|fg\|_{H^s} \lesssim \|f\|_{H^s} \|g\|_{H^s}$.

We will sometimes use the notation $H^{s+}$ for $H^{s+\kappa}$, where $\kappa$ can be taken arbitrarily small, with (implicit) constants depending on $\kappa$.

For a function of space and time $f = f(t,x)$, and times $a < b$, the Banach space $L^p(a,b;H^N)$ is given by the norm
$$
\| f \|_{L^p(a,b;H^N)} = \left\| \left\| f \right\|_{H^N} \right\|_{L^p(a,b)}.
$$
For simplicity of notation we usually simply write $\norm{f}_{L^p H^N}$ as the time-interval of integration in this work will be the same basically everywhere. 

\subsubsection{Littlewood-Paley decomposition and paraproduct} \label{sec:LPP} 
Start with a smooth, non-negative function $\theta$ supported in the annulus $B(0,5) \setminus B(0,1)$ of $\mathbb{R}^3$, and such that $\sum_{j=-\infty}^{+\infty} \theta \left(\frac{\xi}{2^j} \right) = 1$ for $\xi \neq 0$, and define the Fourier multipliers
$$
P_j = \theta \left(\frac{D}{2^j} \right) \qquad P_{\leq J} =  \sum_{j=-\infty}^J  \theta \left(\frac{D}{2^j} \right) \qquad P_{>J} = 1 - P_{\leq J}.
$$
These Fourier multipliers enable us to split the product into two pieces such that each corresponds to the interaction of high frequencies of one function with low frequencies of the other:
$$
fg = f_{Hi} g_{Lo} + f_{Lo} g_{Hi}
$$
with
\begin{align*}
f_{Hi} g_{Lo} = \sum_j P_j f P_{\leq j} g \qquad f_{Lo} g_{Hi} = \sum_j P_{\leq j-1} f P_j g
\end{align*}
(the lack of symmetry in this formula is irrelevant). We record the estimate
\begin{align} 
\| f_{Hi}g_{Lo}\|_{H^s} \lesssim \|f\|_{H^s} \| g\|_{H^\sigma} \qquad \mbox{for $s>0$, $\sigma>\frac{3}{2}$.} \label{LPprod}
\end{align}
We further note that if $g$ depends only on two variables, say $y$ and $z$, then we have 
\begin{align} 
\| f_{Hi}g_{Lo}\|_{H^s} \lesssim \|f\|_{H^s} \| g\|_{H^\sigma} \qquad \mbox{for $s>0$, $\sigma>1$.} \label{LPprod2p5D}
\end{align}

\subsection{Re-formulation of the equations} \label{COV}
First, we re-formulate the equations to make them more amenable to long-time, nonlinear analysis.  

\subsubsection{Change of dependent variables}
In order to understand the linearized equation in \S\ref{sec:lin}, it is important to use the unknown $q^2 = \Delta u^2$. 
 In linear or formal weakly nonlinear analyses (see e.g. \cite{Chapman02,SchmidHenningson2001} and the references therein) it is natural to couple $q^2$ with the vertical component of the vorticity, however, we will also need to change independent variables to adapt to the mixing caused by $u_0^1$, which makes this approach very problematic. 
Therefore, it is more convenient to work with the set of unknowns $q^i = \Delta u^i$ (as observed in \cite{BGM15I}).
These unknowns satisfy the system 
\begin{equation} \label{eq:qi}  
\left\{ \begin{array}{l}
\partial_t q^1 +y \partial_x q^1 - \nu \Delta q^1+  2\partial_{xy} u^1 + q^2 - 2\partial_{xx} u^2 = - u \cdot \nabla q^1 - q^j \partial_j u^1 -  2 \partial_{i} u^j \partial_{ij}u^1 + \partial_x \left(\partial_i u^j \partial_j u^i\right) \\
\partial_t q^2 +y \partial_x q^2 - \nu \Delta q^2 = - u \cdot \nabla q^2 - q^j \partial_j u^2 - 2  \partial_{i} u^j \partial_{ij}u^2 + \partial_y \left(\partial_i u^j \partial_j u^i\right)\\
\partial_t q^3 +y \partial_x q^3 - \nu \Delta q^3 +  2\partial_{xy} u^3 - 2\partial_{xz} u^2  = - u \cdot \nabla q^3  - q^j \partial_j u^3 - 2 \partial_{i} u^j \partial_{ij}u^3 + \partial_z \left(\partial_i u^j \partial_j u^i\right)\\
q(t=0) = q_{in}.
\end{array} \right.
\end{equation}

\subsubsection{Change of independent variables} \label{sec:coordchnge}
The $x$-component of the streak, $u^1_0$, is expected to be as large as $O(\epsilon \nu^{-1})$ (again from \S\ref{sec:lin}), which is far too large to be balanced directly by the dissipation (it is not hard to check this would require $\epsilon \ll \nu^{2}$). 
Hence, we remove the fast mixing action due to the streak itself, an approach also used in \cite{BGM15I} for the same reason.  
There is essentially no choice in the change of coordinates we can employ -- it is dictated uniquely by the desired properties and the structure of the equation. 
Hence, define the coordinate transform as in \cite{BGM15I},  
\begin{subequations} \label{def:cords}
\begin{align} 
X & = x - t y - t \psi(t,y,z) \\ 
Y & = y + \psi(t,y,z) \\ 
Z & = z,
\end{align}
\end{subequations} 
where $\psi$ is chosen to satisfy the PDE 
\begin{subequations}
\begin{align}
\frac{d}{dt}(t\psi) + u_0 \cdot \grad \left(t\psi\right) & = u_0^1 - t u_0^2 + \nu t \Delta \psi  \label{def:psi} \\ 
\lim_{t \rightarrow 0} t \psi(t) & = 0. 
\end{align}
\end{subequations}
The mild coordinate singularity at $t = 0$ will be irrelevant, as this coordinate transform will only be applied for $t \geq 1$ (see \S\ref{sec:coordstuff} for more details). 
To distinguish between old and new coordinates, we capitalize $u^i$ and $q^i$, while $\psi$ itself becomes $C$:
\begin{align*} 
U^i(t,X,Y,Z) = u^i(t,x,y,z), \quad Q^i(t,X,Y,Z) = q^i(t,x,y,z), \quad C(t,Y,Z) = \psi(t,y,z), 
\end{align*}
where we are using the shorthand $X = X(t,x,y,z)$, $Y = Y(t,x,y,z)$, and $Z = Z(t,x,y,z)$. 
Similarly, we denote
\begin{align*}
\psi_y(t,Y,Z) = \partial_y\psi(t,y,z), \quad \psi_z(t,Y,Z) = \partial_z\psi(t,y,z). 
\end{align*}
In the new coordinates, differential operators are modified as follows: if $f(t,x,y,z) = F(t,X,Y,Z)$,
\begin{align*} 
\nabla f(t,x,y,z) = \left( \begin{array}{c} \partial_x f \\ \partial_y f \\ \partial_z f \end{array} \right) = \left( \begin{array}{c} \partial_X F \\ (1 + \psi_y)(\partial_Y - t \partial_X) F \\ ( \partial_Z + \psi_z (\partial_Y - t \partial_X ))  F \end{array} \right) = \left( \begin{array}{c} \partial_X^t F \\ \partial_Y^t F \\ \partial_Z^t F \end{array} \right) = \nabla_t F(t,X,Y,Z).
\end{align*} 
It will be useful to isolate the ``linear part'' of $\nabla_t$ (that is, the contribution associated with the linearized problem), which we denote $\nabla_L$:
\begin{align*} 
\nabla_L = \left( \begin{array}{c} \partial_X \\ \partial_Y - t \partial_X \\ \partial_Z \end{array}  \right) = \left(  \begin{array}{l}  \partial_X \\ \partial_Y^L \\ \partial_Z \end{array} \right).
\end{align*}
Using the notation
\begin{subequations} 
\begin{align} 
\Delta_L & = \nabla_L \cdot \grad_L = \partial_X^2 + (\partial_Y^L)^2 + \partial_Z^2 \\ 
G & = (1+\psi_y)^2+\psi_z^2-1, \label{def:G}
\end{align} 
\end{subequations} 
the Laplacian transforms as
\begin{align*}
\Delta f = \Delta_t F & = \left( (\partial_{X})^2 + (\partial_Y^t)^2 + (\partial_Z^t)^2 \right) F =  \Delta_L F + G \partial_{YY}^L F + 2 \psi_z \partial_{ZY}^L F + \Delta_t C \partial_Y^L F. 
\end{align*}
We will also need the modified Laplacian
\begin{align*} 
\widetilde{\Delta}_t F = \Delta_t F - \Delta_t C \partial_Y^L F = \Delta_L F + G \partial_{YY}^L F + 2 \psi_z \partial_{YZ}^L F.
\end{align*}
Notice that $\psi_y$, $\psi_z$, and $C$ are related as follows:
\begin{subequations} \label{eq:coefGeom}
\begin{align} 
\psi_y & =  \frac{\partial_Y C}{1 - \partial_YC} = \partial_Y C\sum_{j = 0}^\infty (\partial_YC)^j \\ 
\psi_z & = \frac{\partial_Z C}{1 - \partial_YC} = \partial_Z C\sum_{j = 0}^\infty (\partial_YC)^j. 
\end{align}
\end{subequations} 
Define $g$ and $\widetilde{U}_0$ (which will be the $X$-independent part of the velocity in the new coordinates) by
\begin{align*} 
g = \frac{1}{t}(U_0^1 - C), \quad \widetilde{U}_0 = \left( \begin{array}{c} 0 \\ g \\ U_0^3 \end{array} \right).
\end{align*}
Computing from \eqref{def:psi}, \eqref{eq:qi}, and \eqref{NSC} gives the following system (see \cite{BGM15I} for more details), 
\begin{align} \label{def:MainSys}
\left\{
\begin{array}{l}
Q^1_t - \nu \widetilde{\Delta}_t Q^1 + Q^2 + 2\partial_{XY}^t U^1 - 2\partial_{XX} U^2 \\
 \qquad \qquad \qquad \qquad = - \widetilde U_0 \cdot \nabla Q^1 -  U_{\neq} \cdot \nabla_t Q^1 - Q^j \partial_j^t U^1 - 2\partial_i^t U^j \partial_{ij}^t U^1 + \partial_X(\partial_i^t U^j \partial_j^t U^i)  \\  
Q^2_t - \nu \widetilde{\Delta}_t Q^2  = - \widetilde U_0 \cdot \nabla Q^2 - U_{\neq} \cdot \nabla_t Q^2 - Q^j \partial_j^t U^2 - 2\partial_i^t U^j \partial_{ij}^t U^2 + \partial_Y^t(\partial_i^t U^j \partial_j^t U^i)  \\ 
Q^3_t - \nu \widetilde{\Delta}_t Q^3 +2\partial_{XY}^t U^3 - 2\partial_{XZ}^t U^2 \\
\qquad \qquad \qquad \qquad = - \widetilde U_0 \cdot \nabla Q^3 - U_{\neq} \cdot \nabla_t Q^3  - Q^j \partial_j^t U^3 - 2\partial_i^t U^j \partial_{ij}^t U^3 + \partial_Z^t(\partial_i^t U^j \partial_j^t U^i), \\  
\end{array}
\right.
\end{align} 
coupled with 
\begin{align} \label{eq:Cg} 
\left\{
\begin{array}{l}
\partial_t C + \widetilde{U}_0 \cdot \nabla C = g - U_0^2 + \nu \widetilde{\Delta}_t C \\ 
\partial_t g + \widetilde{U}_0 \cdot \nabla g = - \frac{2}{t}g - \frac{1}{t} (U_{\neq} \cdot \nabla_t U^1_{\neq})_0 + \nu \widetilde{\Delta}_t g. 
\end{array}
\right.
\end{align} 
Although most work is done directly on the system \eqref{def:MainSys}, \eqref{eq:Cg}, 
for certain steps it will be useful to use the momentum form of the equations
\begin{equation}
\partial_t U - \nu \widetilde{\Delta}_t U + \widetilde{U}_0 \cdot \nabla U + U_{\neq} \cdot \nabla_t U =  \begin{pmatrix} - U^2 \\ 0 \\ 0 \end{pmatrix} + \nabla_t {\Delta}_t^{-1} 2 \partial_X U^2 + \nabla_t \Delta_t^{-1} (\partial^t_i U^j \partial_j^t U^i). \label{eq:momentum}
\end{equation}

\subsubsection{Shorthands} \label{sec:short} 

It will be quite convenient to use shorthands for the various terms appearing in the above equations, and to be able to distinguish whether interacting modes have zero or non-zero $X$ frequency. Let us start with linear terms, appearing in the equations for $Q^k$, $k=1,3$:
\begin{align*}
& LU = Q^2 & \mbox{(lift up term)}\\
& LS = 2\partial_{XY}^t U^k & \mbox{(linear stretching term)}\\
& LP = - 2\partial_{Xk} U^2& \mbox{(linear pressure term)} .
\end{align*}
Next, consider the nonlinear terms in \eqref{def:MainSys}. In the following, $i,j$ run in $\{ 1,2,3 \}$, while $\epsilon_1$ and $\epsilon_2$ may be $0$ or $\neq$):
\begin{align*}
& T_{0,\epsilon_1} = \widetilde U_0 \cdot \nabla Q^k_{\epsilon1}  & \mbox{(transport term)} \\
& T_{\neq, \epsilon_1} = U_{\neq} \cdot \nabla_t Q^k_{\epsilon1} & \mbox{(transport term)} \\
& NLS1(j,\epsilon_1,\epsilon_2) = Q^j_{\epsilon_1} \partial_j^t U^k_{\epsilon_2}  & \mbox{(nonlinear stretching term)}\\
& NLS2(i,j,\epsilon_1,\epsilon_2) = 2 \partial_i^t U^j_{\epsilon_1} \partial_{ij}^t U^k_{\epsilon_2}  & \mbox{(nonlinear stretching term)}\\
& NLP(i,j,\epsilon_1,\epsilon_2) = \partial^t_k(\partial_j^t U^i_{\epsilon_1} \partial_i^t U^j_{\epsilon_2})  & \mbox{(nonlinear pressure term)}.
\end{align*}
We will often abuse notation and, for instance, denote indifferently $NLS1$ for this term, and its contribution to an energy estimate.
The origin of these terms is more or less clear except perhaps the ``stretching'' terminology, which is due to the similarity these terms have with the vortex stretching term in the 3D Navier-Stokes equations in vorticity form. 

\subsection{The Fourier multipliers} \label{sec:FM}

At this point, our work here will depart from the infinite regularity case \cite{BGM15I}.   

\bigskip

\noindent \underline{The multiplier $m$: stretching versus dissipation.} Our focus here is the following linear equation, 
$$
\partial_t f + 2 \partial_{XY}^L \Delta_L^{-1} f - \nu \Delta_L f = 0,
$$
which occurs as some of the main linear terms governing $Q^1$ and $Q^3$ in \eqref{def:MainSys}.  
This equation can be seen as a competition between the linear stretching term $2\partial_{XY}^L \Delta_L^{-1}f$ and the dissipation term $\nu\Delta_L f$. 
Taking the Fourier transform, it becomes
$$
\partial_t \widehat{f} + 2 \frac{k(\eta - kt)}{k^2 + (\eta - kt)^2 + \ell^2} \widehat{f} + \nu \left(k^2 + (\eta - kt)^2 + \ell^2 \right) \widehat{f} = 0.
$$
If $k \neq 0$, the factor $2 \frac{k(\eta - kt)}{k^2 + (\eta - kt)^2 + \ell^2}$ is positive for $t < \frac{\eta}{k}$, in which case it amounts to damping on $\widehat{f}$; and negative for $t> \frac{\eta}{k}$, in which case it corresponds to an amplification of $\widehat{f}$. As for the factor $\nu \left(k^2 + (\eta - kt)^2 + \ell^2 \right)$, this gives enhanced dissipation for $k \neq 0$. We start with the following inequality, which compares the sizes of these two factors: uniformly in $(k,\eta,\ell)$, if $k \neq 0$,
$$
\nu \left(k^2 + (\eta - kt)^2 + \ell^2 \right) >> \frac{|k(\eta - kt)|}{k^2 + (\eta - kt)^2+ \ell^2} \quad \mbox{if} \quad |t - \frac{\eta}{k} | >> \nu^{-1/3}.
$$
Indeed, $\frac{|k(\eta - kt)|}{\nu (k^2 + (\eta - kt)^2+ \ell^2)^2} \leq \frac{|t - \frac{\eta}{k}|}{\nu(1 + |t-\frac{\eta}{k}|^2)^2}$, and it is easy to check that $\frac{x}{\nu(1+x^2)^2} << 1$ for $|x| >> \nu^{-1/3}$. 

To summarize, stretching overcomes dissipation if $0 < t-\frac{\eta}{k} \lesssim \nu^{-1/3}$. To deal with this range of $t$, 
we introduce the multiplier $m$. Define $m(t,k,\eta,\ell)$ by $m(t=0,k,\eta,\ell) = 1$ and the following ODE:
\begin{equation*}
\frac{\dot{m}}{m} =
\left\{
\begin{array}{ll}
 0 & \mbox{if $t \notin \left[ \frac{\eta}{k}, \frac{\eta}{k} + 1000\nu^{-1/3} \right]$} \\
\frac{2k(\eta-kt)}{k^2 + (\eta - kt)^2 + \ell^2} & \mbox{if $t \in \left[ \frac{\eta}{k}, \frac{\eta}{k} + 1000\nu^{-1/3} \right]$}
\end{array}
\right.
\end{equation*}
This multiplier is such that: if $f$ solves the above equation and $0 < t-\frac{\eta}{k} < 1000 \nu^{-1/3}$, then $m f$ solves
$$
\partial_t (mf) -  \nu \Delta_L (mf) = 0,
$$
and this equation is perfectly well behaved! That is, the growth that $f$ undergoes is balanced by the decay of the multiplier $m$ -- this is especially useful since the growth is highly anisotropic in frequency.  
Conveniently, it turns out that $m$ is given by a closed formula:
\begin{enumerate}
\item {If $k=0$}: $m(t,0,\eta,\ell) = 1$.
\item {If $k \neq 0$, $\frac{\eta}{k} < -1000 \nu^{-1/3}$}: $m(t,k,\eta,\ell) = 1$.
\item {If $k \neq 0$, $-1000 \nu^{-1/3} < \frac{\eta}{k} < 0$}:
\begin{itemize}
\item $m(t,k,\eta,\ell) = \frac{k^2 + \eta^2 + \ell^2}{k^2 + (\eta-kt)^2 + \ell^2}$ if $0<t<\frac{\eta}{k} + 1000 \nu^{-1/3}$.
\item $m(t,k,\eta,\ell) = \frac{k^2 + \eta^2 + \ell^2}{k^2 + (1000 k \nu^{-1/3})^2 + \ell^2}$ if $t>\frac{\eta}{k} + 1000 \nu^{-1/3}$.
\end{itemize}
\item {If $k \neq 0$, $\frac{\eta}{k} > 0$}:
\begin{itemize}
\item $m(t,k,\eta,\ell) = 1$ if $t < \frac{\eta}k$.
\item $m(t,k,\eta,\ell) = \frac{k^2 + \ell^2}{k^2 + (\eta-kt)^2 + \ell^2}$ if $ \frac{\eta}k < t < \frac{\eta}{k} + 1000 \nu^{-1/3}$.
\item $m(t,k,\eta,\ell) = \frac{k^2 + \ell^2}{k^2 + (1000 k \nu^{-1/3})^2 + \ell^2}$ if $t > \frac{\eta}k + 1000 \nu^{-1/3}$.
\end{itemize}
\end{enumerate}
Notice in particular that
\begin{align}
\nu^{2/3} \lesssim m(t,k,\eta,\ell) \leq 1.\label{mlowbound}
\end{align} 
Further, we point out the following key inequality, which shows that the growth is exactly balanced by $\Delta_L$: 
\begin{align}
m(t,k,\eta,l) \gtrsim \frac{k^2+l^2}{k^2 + l^2 + \abs{\eta-kt}^2}. \label{ineq:mDelTrick}
\end{align}

\bigskip
\noindent
\underline{Additional multiplier bounded from below by a positive constant.}
We will use several additional multipliers, which unlike $m$, are bounded above and below uniformly in $\nu$ and frequency. 
Multipliers $M^0$ and $M^1$ are used to balance the growth due to the linear pressure terms as well as some of the leading order nonlinear terms. 
The multiplier $M^2$ plays an especially crucial role by compensating for the transient slow-down of the enhanced dissipation near the critical times and hence this multiplier will be ultimately how we quantify accelerated dissipation \emph{without} regularity loss -- of crucial importance to our methods and not possible with the techniques employed in the infinite regularity works \cite{BMV14,BGM15I}. 

Define $M^i$, $i = 0,1,2$ as follows: $M^i(t=0,k,\eta,\ell) = 1$ and
\begin{itemize}
\item if $k=0$, $M^i(t,k,\eta,\ell) = 1$ for all $t$;
\item if $k \neq 0$, $\displaystyle \frac{\dot{M^0}}{M^0} = \frac{-k^2}{k^2 + \ell^2 + (\eta-kt)^2}$;
\item if $k \neq 0$, $\displaystyle \frac{\dot{M^1}}{M^1} = \frac{-2\langle k\ell \rangle}{k^2 + \ell^2 + (\eta-kt)^2}$;
\item if $k \neq 0$, $\displaystyle \frac{\dot{M^2}}{M^2} = - \frac{\nu^{1/3}}{\left[ \nu^{1/3} |t-\frac{\eta}{k} | \right]^{1 + \kappa} + 1}$;
\end{itemize}
where $\kappa \in (0,1/2)$ is a small, fixed constant. 
It is easy to check that these multipliers satisfy
$$
0 < c < M^i(t,k,\eta,l) \leq 1
$$
for a universal constant $c$. Define then
$$
M = M^0 M^1 M^2.
$$
To see the usefulness of $M$, consider the weighted energy estimate, 
\begin{align*} 
\frac{1}{2} \frac{d}{dt} \norm{M Q^3_{\neq}}_{H^N}^2 = -\norm{\sqrt{-\dot M M} Q^3_{\neq}}_{H^N}^2 + \langle M U^3 , M \partial_t Q^3_{\neq} \rangle_{H^N}.
\end{align*}
In order to bound the latter term, we may firstly use some of the negative term coming from $\dot{M}$, and secondly, if we can control the term by something like
\begin{align*}
\langle M Q^3 , M \partial_t Q^3_{\neq} \rangle_{H^N} \leq \frac{1}{2}\norm{\sqrt{-\dot M M} Q^3_{\neq}}_{H^N}^2 - \frac{\nu}{2}\norm{\grad_L M Q^3}_{H^N}^2 + \epsilon^3 \mathcal{E}(t),
\end{align*}
where $\mathcal{E}$ is uniformly bounded in $L^1_t$, then we have a bound on \emph{both} $\sqrt{\nu}\norm{\grad_L M Q^3}_{L^2 H^N}$ and $\norm{\sqrt{-\dot M M} Q^3_{\neq}}_{L^2 H^N}$!  
The usefulness of this estimate is emphasized by the following very important lemma (the proof of which is immediate from the definition of $M^2$) which shows how to deduce $L^2$ in time enhanced dissipation without losing any regularity.  
\begin{lemma} \label{lem:postmultED}
There holds for $k \neq 0$
\begin{align*}
1 \lesssim \nu^{-1/6} \sqrt{-\dot{M}^2 M^2}(k,\eta,l) + \nu^{1/3}\abs{k,\eta-kt,l}.  
\end{align*}
As a corollary, the following holds for any $f$ and $\alpha \geq 0$,  
\begin{align*}
\norm{f_{\neq}}_{L^2 H^\alpha} & \lesssim \nu^{-1/6}\left(\norm{\sqrt{-\dot{M}^2 M^2}f_{\neq}}_{L^2H^\alpha} + \nu^{1/2}\norm{\grad_L f_{\neq}}_{L^2 H^\alpha} \right).  
\end{align*}
\end{lemma}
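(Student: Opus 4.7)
The proof is essentially immediate from the definition of $M^2$, so the plan is mainly to organize a short case analysis for the pointwise bound and then apply Plancherel for the corollary. The key structural observation is that since $M^2$ is bounded above and below by positive constants,
\begin{equation*}
-\dot M^2 M^2 \;=\; (M^2)^2 \cdot \frac{\nu^{1/3}}{[\nu^{1/3}|t-\eta/k|]^{1+\kappa}+1} \;\gtrsim\; \frac{\nu^{1/3}}{[\nu^{1/3}|t-\eta/k|]^{1+\kappa}+1},
\end{equation*}
so that $\nu^{-1/6}\sqrt{-\dot M^2 M^2} \gtrsim \bigl([\nu^{1/3}|t-\eta/k|]^{1+\kappa}+1\bigr)^{-1/2}$.

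For the pointwise inequality I would split into the regime $\nu^{1/3}|t-\eta/k|\leq 1$ and its complement. In the former, the displayed lower bound already gives $\nu^{-1/6}\sqrt{-\dot M^2 M^2}\gtrsim 1$, so the first term alone dominates. In the latter, $|t-\eta/k|>\nu^{-1/3}$, and because $k$ is a nonzero integer (hence $|k|\geq 1$), we have $|\eta - kt|=|k|\,|t-\eta/k|>\nu^{-1/3}$, which forces $\nu^{1/3}|k,\eta-kt,l|\geq \nu^{1/3}|\eta-kt|>1$, so the second term alone dominates. Combining the two regimes yields the stated pointwise bound.

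To obtain the corollary I would square the pointwise inequality, use $(a+b)^2 \lesssim a^2 + b^2$, multiply by $|\widehat{f_{\neq}}(t,k,\eta,l)|^2\,\jap{k,\eta,l}^{2\alpha}$, and integrate over $(k,\eta,l)$ and $t$. Since the Fourier symbol of $\nabla_L$ is proportional to $(k,\eta-kt,l)$, Plancherel converts the two terms on the right into $\nu^{-1/3}\norm{\sqrt{-\dot M^2 M^2}f_{\neq}}_{L^2H^\alpha}^2$ and $\nu^{2/3}\norm{\nabla_L f_{\neq}}_{L^2H^\alpha}^2$ respectively, while the left side becomes $\norm{f_{\neq}}_{L^2H^\alpha}^2$. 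Taking square roots gives the estimate. There is no real obstacle: the only place where something actually has to be checked is the quantization $|k|\geq 1$ in the second regime, without which the $|t-\eta/k|>\nu^{-1/3}$ case could not be converted into a lower bound on $\nabla_L$; everything else follows mechanically from the defining ODE for $M^2$.
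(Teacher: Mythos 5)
Your proof is correct and follows exactly the argument the paper has in mind: the paper omits the proof as ``immediate from the definition of $M^2$,'' and your case split on $\nu^{1/3}|t-\eta/k|\le 1$ versus $>1$ (using $|k|\ge 1$ and the uniform lower bound on $M^2$), followed by Plancherel for the corollary, is precisely that immediate argument.
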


The use of norms with time-decaying norms is quite classical when working in infinite regularity (see e.g. the Cauchy-Kowalevskaya theorems of \cite{Nirenberg72,Nishida77}). 
The use of dissipation-like terms that appear in $L^2$-based infinite regularity estimates goes back to \cite{FoiasTemam89}; see also related ideas in e.g. \cite{LevermoreOliver97,KukavicaVicol09,CGP11,MouhotVillani11} and the references therein.  
The ghost energy of Alinhac \cite{Alinhac01} for quasilinear wave equations uses $O(1)$ time-dependent weights in the norms, and the $O(1)$ multiplier $M$ is a Fourier-side analogue -- this general idea has been used several times \cite{Zillinger2014,BGM15I,BGM15II}. 
Combining ideas like the ghost energy with the Cauchy-Kowalevskaya-type ideas are multipliers such as $m(t,\grad)$, which are \emph{not} $O(1)$ ($m^{-1}$ is bounded only by $O(\nu^{-2/3})$); this is significantly more complicated, as will be clear from the proof. 
In the context of nonlinear mixing, this general idea was introduced for infinite regularity in \cite{BM13} and extended further in \cite{BMV14,BGM15I,BGM15II}.   
However, $m$ is very different from ideas appearing in these infinite regularity works as we must use very differently  the interplay between the dissipation and destabilizing effects. 

\subsection{Bootstrap} \label{sec:boot}

In the following, it will be convenient (for bookkeeping purposes) to introduce 
$$
N = \sigma - 2 > \frac{5}{2}.
$$
First, we have the following standard lemma; we omit the proof for brevity. 
\begin{lemma}[Local existence, continuation, and propagation of analyticity] \label{lem:Cont}
Let $u_{in}$ be divergence free and satisfy \eqref{ineq:smallness}. Then there exists a $T^\star > 0$ independent of $\nu$ such that there is a unique strong solution to \eqref{NSC} $u(t) \in C([0,T^\star];H^{N+2})$ which satisfies the initial data and is real analytic for $t \in (0,T^\star]$. 
Moreover, there exists a maximal time of existence $T_0$ with $T^\star < T_0 \leq \infty$ such that the solution $u(t)$ remains unique and real analytic on $(0,T_0)$, and, if for some $\tau \leq T_0$  we have $\limsup_{t \nearrow \tau} \norm{u(t)}_{H^{N}} < \infty$, then $\tau < T_0$.  
\end{lemma}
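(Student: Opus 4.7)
The plan is to establish the three claims — local existence with $T^\star$ independent of $\nu$, uniqueness, and instantaneous real analyticity — by standard semilinear parabolic arguments, after first removing the unbounded coefficient $y\partial_x u$ via the moving-frame change of variables $\bar x = x - ty$ used in \eqref{eq:linbaru}. In the new coordinates the system is genuinely semilinear, with bilinear part $-\bar u \cdot \nabla_L \bar u + \nabla_L \Delta_L^{-1}(\partial_i^L \bar u^j \partial_j^L \bar u^i)$ and additional zero-order lift-up and linear pressure terms. On any compact interval $[0,T]$ the Fourier symbol of $\nabla_L$ satisfies $|\nabla_L| \lesssim \jap{T}|\nabla|$, so moving-frame and original Sobolev norms are equivalent with constants depending only on $T$.

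For local existence I would run a contraction-mapping argument on the Duhamel formulation
\begin{equation*}
\bar u(t) = e^{\nu t \Delta_L} \bar u_{in} + \int_0^t e^{\nu(t-s)\Delta_L} \mathcal{N}[\bar u](s)\,ds
\end{equation*}
in $C([0,T^\star]; H^{N+2})$. Since $N+2 = \sigma > 9/2 > 3/2$, $H^{N+2}$ is an algebra, and every term in $\mathcal{N}$ admits an estimate of the form $\jap{T}^C \norm{\bar u}_{H^{N+2}}^2$ (noting that $\nabla_L \Delta_L^{-1}$ is a bounded Fourier multiplier on non-$\bar x$-zero modes while the pressure contribution vanishes at $k=0$). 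Crucially, $e^{\nu t \Delta_L}$ is a contraction on $H^s$ for every $\nu \geq 0$, so the existence time $T^\star$ depends only on $\norm{u_{in}}_{H^{N+2}}$ and not on $\nu$. Uniqueness of strong solutions in this class follows from a standard energy estimate on the difference of two solutions, and divergence-freeness is propagated from the initial data by the structure of the nonlinearity.

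For the continuation criterion I would derive the tame energy estimate
\begin{equation*}
\frac{d}{dt}\norm{\bar u(t)}_{H^{N+2}}^2 + 2\nu \norm{\nabla_L \bar u(t)}_{H^{N+2}}^2 \lesssim \jap{t}^C \norm{\bar u(t)}_{H^N} \norm{\bar u(t)}_{H^{N+2}}^2
\end{equation*}
by pairing the equation with $\jap{D}^{2(N+2)} \bar u$ and applying Kato--Ponce commutator estimates, together with the algebra property of $H^N$ (valid since $N > 5/2$). Gronwall's inequality then shows that as long as $\norm{u(t)}_{H^N}$ remains bounded up to some time $\tau$, so does $\norm{u(t)}_{H^{N+2}}$, which via the local theory lets the solution be extended past $\tau$; this is the claimed blow-up alternative at the $H^N$ level.

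Propagation of real analyticity on $(0,T^\star]$ I would obtain in the moving frame via a Foias--Temam Gevrey-$1$ energy estimate: set $\mu(t) = \mu_0 \min(t,1)$ with $\mu_0$ small depending on $\nu$, and differentiate $\norm{e^{\mu(t)|\nabla|} \bar u(t)}_{H^{N+2}}^2$. The analyticity-generating term $\dot \mu(t)\norm{|\nabla|^{1/2} e^{\mu(t)|\nabla|} \bar u}_{H^{N+2}}^2$ is absorbed by a fraction of the viscous dissipation $\nu \norm{\nabla_L e^{\mu(t)|\nabla|} \bar u}_{H^{N+2}}^2$ whenever $\mu_0 \ll \nu$, while the Gevrey-$1$ algebra inequality $\norm{e^{\mu|\nabla|}(fg)}_{H^s} \lesssim \norm{e^{\mu|\nabla|} f}_{H^s}\norm{e^{\mu|\nabla|} g}_{H^s}$ bounds the nonlinear contributions exactly as in the local existence step. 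The main obstacle is decoupling $T^\star$ from $\nu$: this forces the local existence step to proceed purely semilinearly without relying on dissipation, and postpones all $\nu$-dependence into the analyticity radius $\mu_0$ (which is naturally $\nu$-small) rather than into $T^\star$ itself.
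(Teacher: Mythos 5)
The paper does not actually supply a proof of this lemma (it is explicitly omitted as standard), so your proposal can only be judged against the standard argument it gestures at; its overall architecture (uniform-in-$\nu$ local theory, a tame estimate plus Gronwall for the $H^N$ continuation criterion, and a Foias--Temam Gevrey estimate for instantaneous analyticity) is indeed the expected one. However, the central step of your local existence argument has a genuine gap: the contraction mapping in $C([0,T^\star];H^{N+2})$ based only on the boundedness of $e^{\nu t\Delta_L}$ does not close. Both the transport term $\bar u\cdot\nabla_L\bar u$ and the nonlinear pressure term lose one derivative, so the best available bound is of the form $\norm{\mathcal N[\bar u]}_{H^{N+2}} \lesssim \jap{T}^{C}\norm{\bar u}_{H^{N+2}}\norm{\bar u}_{H^{N+3}}$, not $\jap{T}^C\norm{\bar u}_{H^{N+2}}^2$ as you claim; the $H^{N+3}$ norm is not controlled in your iteration space. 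The only way to recover that derivative within a Duhamel fixed point is parabolic smoothing of $e^{\nu t\Delta_L}$, which costs a factor $(\nu t)^{-1/2}$ and forces $T^\star$ to shrink with $\nu$ --- precisely what the lemma forbids. So the assertion that the fixed point proceeds ``purely semilinearly without relying on dissipation'' and yields a $\nu$-independent $T^\star$ is the step that fails.

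The repair is standard but changes the mechanism: construct the solution by an energy method rather than by contraction in the top norm. For instance, solve mollified (or Galerkin-truncated) approximations, and close the quasilinear a priori estimate
$\frac{d}{dt}\norm{u}_{H^{N+2}}^2 + 2\nu\norm{\nabla u}_{H^{N+2}}^2 \lesssim \norm{u}_{H^{N+2}}^3$
uniformly in $\nu\ge 0$, where the top-order contribution of the transport term is killed by incompressibility ($\int (u\cdot\nabla)\jap{D}^{N+2}u\cdot\jap{D}^{N+2}u\,dV=0$, with the commutator handled by Kato--Ponce, and the term $y\partial_x u$ contributing only a bounded commutator in the original frame); this gives $T^\star$ depending only on $\norm{u_{in}}_{H^{N+2}}$, with uniqueness from an $L^2$ estimate on differences. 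You already invoke exactly this machinery in your continuation step, so the fix is to use it for existence as well (alternatively, run the fixed point in a lower norm while propagating $H^{N+2}$ bounds by energy estimates). The rest of your outline --- the moving-frame norm equivalences on compact time intervals, the $H^N$-level blow-up criterion via the tame estimate, and the Gevrey-$1$ argument with radius $\mu_0\min(t,1)$, $\mu_0\lesssim\nu$, where the analyticity production and the derivative loss in the nonlinearity are absorbed by the $\nu$-dissipation --- is sound, since the analyticity radius (unlike $T^\star$) is allowed to degenerate as $\nu\to 0$.
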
 

By similar considerations (see \S\ref{sec:coordstuff}), for $\epsilon$ sufficiently small, there are no issues getting estimates on $q^i$, $u^i$, and $\psi$ until $t = 2$. 
\begin{lemma} \label{lem:BootStart}
For $\epsilon \nu^{-3/2}$ sufficiently small and constants $C_0,C_1$ sufficiently large (chosen below), the following estimates hold for $t \in [0,2]$:  
\begin{subequations} \label{ineq:initlayer}
\begin{align}
\norm{q^1(t)}_{H^{N}} + \norm{q^3(t)}_{H^{N}}  & \leq 2C_0\epsilon \\ 
\norm{q^2(t)}_{H^{N}}  & \leq 2\epsilon \\ 
\norm{u^1(t)}_{H^{N+2}} + \norm{u^3(t)}_{H^{N+2}} & \leq 2C_0\epsilon \\ 
\norm{u^2(t)}_{H^{N+2}} & \leq 2\epsilon \\ 
\norm{t\psi(t)}_{H^N} & \leq 2C_1\epsilon.  
\end{align}
\end{subequations} 
\end{lemma}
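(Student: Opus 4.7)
The plan is to view this as a standard short-time well-posedness statement on the fixed interval $[0,2]$, using that $\nu \in (0,1)$ (so dissipation only helps), $\epsilon$ is small (since $\epsilon < \delta\nu^{3/2} \leq \delta$), and that all constants arising from linear growth on a bounded time interval are $O(1)$ in $\nu$ and can be absorbed into $C_0$ and $C_1$. Lemma~\ref{lem:Cont} produces a real-analytic solution on a maximal interval $[0,T_0)$, so the task is to show that the five estimates in \eqref{ineq:initlayer} each hold with strict inequality on any subinterval $[0,T]$ with $T \leq \min(2,T_0)$; continuation then forces $T_0 > 2$.

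First I would control $\|u(t)\|_{L^2}$. Testing the momentum form \eqref{eq:momentum} (in original coordinates it is \eqref{NSC}) against $u$, the transport $y\partial_x u$, the pressure gradient and the nonlinearity $u\cdot\nabla u$ all contribute zero by incompressibility and integration by parts. The only surviving linear term is $\langle u^2, u^1\rangle \lesssim \|u\|_{L^2}^2$ from lift-up, so Gronwall gives $\|u(t)\|_{L^2} \lesssim \epsilon$ on $[0,2]$. Next I would run an $H^N$ energy estimate on the system \eqref{eq:qi} for $q = (q^1,q^2,q^3)$. The transport term $y\partial_x q$ is handled via $\langle y\partial_x \partial^\alpha q,\partial^\alpha q\rangle = 0$ and a commutator bound $\|[\partial^\alpha,y]\partial_x q\|_{L^2} \lesssim \|q\|_{H^N}$; the linear stretching and pressure terms $\partial_{xy}u$, $\partial_{xx}u^2$, $\partial_{xz}u^2$ are bounded by $\|u\|_{H^{N+2}} \lesssim \|q\|_{H^N} + \|u\|_{L^2}$ using $u = \Delta^{-1}q$. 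Every nonlinear term is a product of two factors each controlled by $\|u\|_{H^{N+2}} + \|q\|_{H^N}$, so the algebra property of $H^N$ (valid since $N > 5/2$) gives estimates of the form $\|q\|_{H^N}^2\|u\|_{H^{N+2}}$ or better. Collecting everything,
\begin{equation*}
\frac{d}{dt}\|q(t)\|_{H^N}^2 \leq C_N\bigl(\|q\|_{H^N}^2 + \|u\|_{L^2}\|q\|_{H^N}\bigr) + C_N\|q\|_{H^N}^3,
\end{equation*}
which by Gronwall yields $\|q(t)\|_{H^N} \leq e^{C_N}\epsilon$ on $[0,2]$ provided $\epsilon$ is small enough. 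Recovering $\|u(t)\|_{H^{N+2}} \lesssim \|q(t)\|_{H^N} + \|u(t)\|_{L^2}$ then gives a matching bound for $u$. All of these constants depend only on $\sigma$, and I would choose $C_0$ at least as large as $e^{C_N}$ (or any universal constant arising) so that the bounds on $q^1, q^3, u^1, u^3$ hold with prefactor $2C_0\epsilon$.

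For the $q^2$ and $u^2$ estimates I would observe that the $q^2$-equation in \eqref{eq:qi} has \emph{no} linear stretching, pressure, or lift-up term---only transport, dissipation, and nonlinearity. Consequently its energy estimate only generates the commutator from $y\partial_x q^2$ and quadratic contributions bounded by $\epsilon\|q^2\|_{H^N}^2$, so choosing $\epsilon$ small enough forces $\|q^2(t)\|_{H^N}\leq (1+C\epsilon)^t \epsilon \leq 2\epsilon$ on $[0,2]$, and likewise for $u^2$ after using $u^2 = \Delta^{-1}q^2$ together with the $L^2$ bound above. Finally, for $\psi$ I would integrate \eqref{def:psi} from $0$ to $t$ to obtain
\begin{equation*}
t\psi(t) = \int_0^t \bigl(u_0^1(s) - s u_0^2(s)\bigr)\,ds - \int_0^t u_0(s)\cdot\nabla(s\psi(s))\,ds + \nu\int_0^t s\Delta\psi(s)\,ds,
\end{equation*}
take $H^N$ norms, apply the already-established bounds on $u_0^{1,2,3}$, and close by Gronwall to get $\|t\psi(t)\|_{H^N} \lesssim \epsilon$ on $[0,2]$, absorbing the implicit constant into $C_1$.

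The only technical nuisance is the bookkeeping of constants: because the lift-up term $Q^2$ drives $q^1$ linearly (and the linear stretching drives $q^3$), the constants there inevitably depend on $N$ via the commutator/algebra bounds, and must be absorbed into $C_0$; likewise the integrated growth of $\psi$ absorbs into $C_1$. Once $C_0$ and $C_1$ are chosen larger than all of these universal constants, choosing $\delta$ small enough that $\epsilon < \delta \nu^{3/2}$ ensures all quadratic contributions are a factor $\delta$ smaller than the linear ones and the strict inequalities \eqref{ineq:initlayer} propagate to $t = 2$. This is the only place where any genuine ``hard'' work occurs---everything else is soft local theory on a bounded time interval.
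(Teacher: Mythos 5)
Your route differs from the paper's: you run the energy estimates directly in the original variables on \eqref{eq:qi}, handling the shear by antisymmetry of $y\partial_x$ plus a commutator bound, whereas the paper (in the proof of Lemma \ref{lem:BootStart2}) first passes to the sheared unknowns $\bar{x}=x-ty$, $h^i(t,\bar x,y,z)=q^i(t,\bar x+ty,y,z)$, $v^i=\Delta_L^{-1}h^i$ as in \eqref{def:barxqh}, propagates $H^N$ regularity there by standard inviscid energy methods (where no linear transport term appears), obtains $\norm{t\psi}_{H^{N+2}}\lesssim \epsilon$ from \eqref{def:psi} by an energy estimate, and transfers back with the composition and inverse-function Lemmas \ref{lem:SobComp}--\ref{lem:SobInv}. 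Your approach is viable for producing bounds of size $O_\sigma(\epsilon)$ on $[0,2]$, and for the $q^1,q^3,u^1,u^3,\psi$ estimates (whose constants are absorbed into $C_0,C_1$) it is essentially equivalent.

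There are, however, two genuine soft spots. First, your treatment of $q^2$ is wrong as written: the commutator $[\jap{D}^N,y]\partial_x q^2$ is a \emph{linear} term of size $C(N)\norm{q^2}_{H^N}$, not a quadratic term of size $\epsilon\norm{q^2}_{H^N}^2$, so Gronwall gives growth like $e^{C(N)t}$ on $[0,2]$ which cannot be removed by shrinking $\epsilon$; your claimed bound $\norm{q^2(t)}_{H^N}\leq (1+C\epsilon)^t\epsilon\leq 2\epsilon$ therefore does not follow from your computation. This is exactly the term the paper's change of variables is designed to eliminate: in the $\bar x$ frame the $h^2$ equation has no linear term besides $\nu\Delta_L$, so the only growth in that estimate is genuinely nonlinear, hence $O(\epsilon)$ relative. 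The distinction is not cosmetic, because the bootstrap keeps the $Q^2$ constant in \eqref{boundQ2} independent of $C_0$ (this is used, e.g., in the $LP$ and $LU$ estimates, which need $Q^2\lesssim\epsilon$ rather than $C_0\epsilon$); any proof of the $q^2,u^2$ bounds must avoid importing an unconstrained $C(N)$ factor there, or at least fix that factor before $C_0$ is chosen. Second, in your Duhamel formula for $t\psi$ you treat $\nu\int_0^t s\Delta\psi\,ds$ as a source, which loses two derivatives relative to the $H^N$ norm you are propagating and so does not close; the fix is trivial (do the $H^N$ energy estimate on the equation for $t\psi$, where $\nu\Delta(t\psi)$ contributes a term of favorable sign, as the paper does), but as written the step is gapped.
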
 

Lemma \ref{lem:BootStart} shows that we only need to worry about times $t > 1$, for which we now move to the coordinate system defined in \S\ref{sec:coordchnge}; for details on converting the estimates to and from these coordinates, see \S\ref{sec:coordstuff} below.  
From now on, all time norms are taken over the interval $[1,T]$ unless otherwise stated; that is all norms are defined via 
\begin{align*}
\norm{f}_{L^p H^s} := \norm{ \norm{f(t)}_{H^s} }_{L^p([1,T])}. 
\end{align*}
Fix $C_0$ and $C_1$ large constants determined by the proof below and let $T$ be the largest time $T \geq 1$ such that the following estimates hold on $[1,T]$ (Lemma \ref{lem:BootStart} implies $T > 2$; see \S\ref{sec:coordstuff}): \\
the bounds on $Q$:
\begin{subequations}\label{boundsQ}
\begin{align}
\hspace{-1cm}\norm{\jap{t}^{-1} Q_0^1(t)}_{L^\infty H^N} & \leq  8C_0 \epsilon \label{boundQ10short} \\
\hspace{-1cm} \norm{Q_0^1}_{L^\infty H^N} + \nu^{1/2} \norm{ \grad Q_0^1}_{L^2 H^N} & \leq 8C_0 \epsilon \nu^{-1} \label{boundQ10} \\ 
\hspace{-1cm} \norm{m M Q_{\neq}^1}_{L^\infty H^N} + \nu^{1/2} \norm{m M \grad_L Q_{\neq}^1}_{L^2H^N} + \norm{\sqrt{-\dot{M} M} m Q^1_{\neq}}_{L^2H^N} & \leq 8C_0 \epsilon \nu^{-1/3} \label{boundQ1d} \\ 
\hspace{-1cm} \norm{m^{1/2} M Q^2}_{L^\infty H^N} + \nu^{1/2} \norm{m^{1/2} M \grad_L Q^2}_{L^2H^N} + \norm{\sqrt{-\dot{M} M} m^{1/2} Q^2}_{L^2H^N} & \leq 8\epsilon \label{boundQ2} \\ 
\hspace{-1cm} \norm{m M Q^3}_{L^\infty H^N} + \nu^{1/2} \norm{m M \grad_L Q^3}_{L^2H^N} + \norm{\sqrt{-\dot{M} M} m  Q^3}_{L^2H^N} & \leq  8 C_0\epsilon \label{boundQ3} \\
\hspace{-1cm} \norm{M Q^2_{\neq}}_{L^\infty H^{N-1}} + \nu^{1/2} \norm{M \grad_L Q^2_{\neq}}_{L^2H^{N-1}} + \norm{\sqrt{-\dot{M} M} Q^2_{\neq}}_{L^2 H^{N-1}} & \leq 8\epsilon, \label{boundQ2low}
\end{align}
\end{subequations} 
the bounds on $U$, 
\begin{subequations} \label{boundsU}
\begin{align}
\label{boundU10short} \norm{\jap{t}^{-1} U^1_0}_{L^\infty H^{N-1}} & \leq 8 C_0 \epsilon \\
 \label{boundU10} \norm{U^1_0}_{L^\infty H^{N-1}} + \nu^{1/2} \norm{\nabla U^1_0}_{L^2 H^{N-1}}  & \leq 8 C_0  \epsilon \nu^{-1} \\
\label{boundU2} \|U^2_0\|_{L^\infty H^{N-1}} + \nu^{1/2} \|U^2_0\|_{L^2 H^{N-1}} +  \nu^{1/2} \|\nabla U^2_0 \|_{L^2 H^{N-1}} & \leq 8\epsilon \\
 \label{boundU30}\| U^3_0\|_{L^\infty H^{N-1}} + \nu^{1/2} \|\nabla U^3_0 \|_{L^2 H^{N-1}} &\leq 8 C_0 \epsilon \\
\label{boundU1neq} \norm{M U_{\neq}^1}_{L^\infty H^{N-1}} + \nu^{1/2} \norm{\grad_L M U_{\neq}^1}_{L^2H^{N-1}} + \norm{\sqrt{-\dot{M} M} U^1_{\neq}}_{L^2H^{N-1}} & \leq 8C_0\epsilon \\  
 \label{boundU02decay} \norm{U_0^2}_{L^1L^2} & \leq 8\epsilon \nu^{-1}, 
\end{align}
\end{subequations} 
and the bounds on the coordinate system
\begin{subequations}\label{boundsC}
\begin{align}
\norm{g}_{L^\infty H^{N+2}} + \nu^{1/2} \norm{\grad g}_{L^2H^{N+2}} & \leq 8C_0\epsilon \label{boundg1}\\ 
\norm{t^2 g}_{L^\infty H^{N-1}} + \nu^{1/2}\norm{t^2 \nabla g}_{L^2 H^{N-1}} & \leq 8C_0\epsilon \label{boundg}\\ 
\norm{C}_{L^\infty H^{N+2}} +  \nu^{1/2}\norm{\grad C}_{L^2 H^{N+2}} & \leq 8C_1\epsilon \nu^{-1}.  \label{boundC}
\end{align}
\end{subequations}

The goal is then to prove that $T = +\infty$, which follows immediately from the following (and that all of these norms are continuous in time).  

\begin{prop} \label{propbootstrap}
Assume that $\| u_{in} \|_{H^{N+2}} \leq \epsilon \leq \delta \nu^{3/2}$, $\nu \in (0,1)$, and that, for some $T > 1$, the estimates~\eqref{boundsU}, \eqref{boundsQ}, \eqref{boundsC} hold on $[1,T]$. Then for $\delta$ sufficiently small, these same estimates hold with all the occurrences of $8$ on the right-hand side replaced by $4$.
\end{prop}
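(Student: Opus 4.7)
The plan is a standard continuity/bootstrap argument. Under the hypothesis that \eqref{boundsU}, \eqref{boundsQ}, \eqref{boundsC} hold on $[1,T]$ with the factor $8$, I would derive energy identities for each weighted norm appearing on the left-hand sides and show that in fact each is bounded by the same quantity with $8$ replaced by $4$. Since the norms are continuous in $t$ and Lemma \ref{lem:BootStart} provides a strictly better initial interval, the set of $T$ for which \eqref{boundsU}--\eqref{boundsC} hold would then be relatively open and closed, forcing $T=+\infty$.

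Each estimate is performed in the $(X,Y,Z)$ coordinates on the system \eqref{def:MainSys}--\eqref{eq:Cg}. For a typical weighted quantity such as $\norm{mMQ^3}_{H^N}^2$, differentiating in time yields an identity whose left-hand side is $\tfrac12 \tfrac{d}{dt}\norm{mMQ^3}_{H^N}^2$ plus the dissipation $\nu\norm{\nabla_L mMQ^3}_{H^N}^2$ plus two nonpositive weight-derivative contributions, including the ``CK-type'' quantity $\norm{\sqrt{-\dot M M}\,mQ^3}_{H^N}^2$ appearing in \eqref{boundQ3}, and whose right-hand side is the $H^N$-pairing of $mMQ^3$ with $mM$ times the RHS of the $Q^3$ equation. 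The multipliers $m$ and $M=M^0 M^1 M^2$ have been designed exactly so that the destabilizing linear terms $LS$ and $LP$ are absorbed by these weight-derivative quantities via Cauchy-Schwarz together with the algebraic bound \eqref{ineq:mDelTrick}; the remaining $\nu$-dissipation absorbs a portion of the nonlinear contributions; and Lemma \ref{lem:postmultED} converts the $M^2$ weight-derivative into an $L^2$-in-time enhanced-dissipation bound \emph{without any regularity loss} --- the crucial technical ingredient that replaces the infinite-regularity machinery of \cite{BGM15I, BGM15II}.

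I would close the bootstrap in a natural order. First improve \eqref{boundsC} via \eqref{eq:Cg}; here one uses that $g$ is forced only by the quadratic-in-non-zero-frequency term $-\tfrac{1}{t}(U_{\neq}\cdot\nabla_t U^1_{\neq})_0$ and hence inherits the inviscid damping of $U^2$ through the null-type structure of the nonlinearity. Next improve \eqref{boundQ2}--\eqref{boundQ2low}, whose $Q^2$ equation has no destabilizing linear terms; then \eqref{boundQ3} and \eqref{boundQ1d} via the full $mM$-energy sketched above; and finally the zero-mode estimates \eqref{boundQ10short}--\eqref{boundQ10}, where $Q^1_0$ grows linearly in $t$ due to the lift-up forcing $-Q^2_0$. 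The velocity bounds \eqref{boundsU} follow by inverting $\Delta_L$ (using \eqref{mlowbound}), and $\norm{U^2_0}_{L^1 L^2}\lesssim \epsilon\nu^{-1}$ reads off from the 2D streak system \eqref{def:2DNSEStreak}. Each nonlinear contribution is distributed via the paraproducts of \S\ref{sec:LPP}; the null structures in \eqref{def:MainSys} (namely, that $u_0^1$ cannot interact with itself nonlinearly, and that the form $u^j\partial_j$ pairs each large $\partial_y$ with an inviscid-damped $u^2$) eliminate the otherwise dangerous interactions. Since $\epsilon\leq\delta\nu^{3/2}$, each cubic contribution carries a factor of $\delta$ after time integration, producing the desired strict improvement.

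The principal obstacle, as flagged in \S\ref{sec:disc}, is the nonlinear pressure term $-\partial_y\Delta^{-1}(\partial_z u^1_0\,\partial_x u^3_{\neq})$ (equivalently, an NLP contribution to the $Q^2$ equation). Here $u^1_0$ is as large as $\epsilon\nu^{-1}$ and $\partial_y$ produces large factors in the twisted coordinates, so a naive bound would lose a power of $\nu$. To close it one must simultaneously deploy the inviscid damping of $u^2$ packaged in $\partial_y\Delta^{-1}$, the enhanced dissipation of $u^3_{\neq}$ via Lemma \ref{lem:postmultED}, the parabolic-smoothing bound $\sqrt{\nu}\norm{\nabla U^1_0}_{L^2 H^{N-1}}\lesssim\epsilon\nu^{-1}$ from \eqref{boundU10}, and the precise frequency localization built into $M^2$ so that no regularity is lost. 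The balance closes precisely because $\epsilon\leq\delta\nu^{3/2}$, which is what pins the Sobolev threshold exponent $\gamma=3/2$ in \eqref{ineq:smallness}; if the hypothesis were relaxed to $\epsilon\ll\nu^\gamma$ with $\gamma<3/2$, this single term would obstruct the bootstrap.
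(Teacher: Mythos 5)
Your outline does follow the paper's overall strategy (weighted energy estimates with the multipliers $m$ and $M$, paraproduct decompositions, exploitation of the null structures, and the identification of $NLP(1,3,0,\neq)$ in the $Q^2$ equation as the term that saturates $\epsilon\lesssim\nu^{3/2}$), but as written it is a program rather than a proof: the entire content of the proposition is the term-by-term verification occupying Sections 3--7 and the appendix, and none of those estimates is actually carried out or even reduced to checkable inequalities. That by itself would be acceptable as a sketch, except that two of the concrete mechanisms you propose for closing specific bootstrap bounds would fail. First, you claim the velocity bounds \eqref{boundsU} ``follow by inverting $\Delta_L$ (using \eqref{mlowbound})''. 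This cannot produce \eqref{boundU1neq}: the bootstrap bound \eqref{boundQ1d} allows $Q^1_{\neq}$ to be as large as $\epsilon\nu^{-1/3}$, so elliptic inversion only yields $\norm{U^1_{\neq}}\lesssim\epsilon\nu^{-1/3}$ (this is exactly \eqref{ineq:U1Boot}), while \eqref{boundU1neq} demands the level $\epsilon$ with no loss. The paper has to run a separate energy estimate in momentum form on $U^1_{\neq}$ (its Section 5), whose whole point is that the lift-up forcing $-U^2_{\neq}$ can be paired with $\sqrt{-\dot M M}$ and controlled by the inviscid-damping bound \eqref{ineq:gradU2} on $\nabla_L U^2_{\neq}$ in $H^{N-1}$; without this extra argument your scheme does not close, since the $O(\epsilon)$ bound on $U^1_{\neq}$ is used repeatedly in the transport and stretching estimates.

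Second, the claim that $\norm{U^2_0}_{L^1L^2}\lesssim\epsilon\nu^{-1}$ ``reads off from the 2D streak system \eqref{def:2DNSEStreak}'' is not correct: the solution is not a streak, so the equation for $u^2_0$ is forced by quadratic interactions of non-zero-in-$x$ modes, and the paper obtains \eqref{boundU02decay} by a Duhamel argument using the spectral gap of $e^{\nu t\Delta}$ on the complement of the $(k,l)=(0,0)$ mode, with the non-zero-frequency forcing controlled through the inviscid damping of $\bar u^2_{\neq}$; the streak proposition plays no role here. Relatedly, you pass over the one genuinely delicate piece of bookkeeping in the scheme: the two distinct estimates on $Q^2$, the $m^{1/2}$-weighted bound \eqref{boundQ2} in $H^N$ (needed because $NLP(1,3,0,\neq)$ cannot be bounded uniformly in $H^N$, see \S\ref{sec:NLPQ2}) and the unweighted bound \eqref{boundQ2low} in $H^{N-1}$ (needed to recover enough inviscid damping of $U^2$ to run the other estimates). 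Saying ``improve \eqref{boundQ2}--\eqref{boundQ2low}'' without explaining why both are present, and how the loss of one derivative substitutes for the loss of $m^{1/2}$, skips the structural idea that makes the whole bootstrap consistent. Finally, the assertion that the single term $NLP(1,3,0,\neq)$ is what pins $\gamma=3/2$ is an overstatement: the smallness $\epsilon\nu^{-3/2}\ll1$ is also used sharply in several other places (e.g.\ the $LS$ error terms and $NLP(1,j,0,\neq)$ in the $Q^3$ estimate, quartic terms in the short-time $Q^1_0$ estimate, and the $H^{N+2}$ estimate on $g$).
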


That Proposition \ref{propbootstrap} implies Theorem \ref{mainthm} is proved in Lemma \ref{lem:intermed} below. 
The proof of Proposition \ref{propbootstrap} comprises the remainder of the paper. 
Let us briefly comment on the structure of the scheme laid out in the coupled estimates \eqref{boundsQ}, \eqref{boundsU}, and \eqref{boundsC}. 
One of the main subtleties are the two estimates on $Q^2$ in \eqref{boundQ2} and \eqref{boundQ2low}. 
The nonlinear effect of high frequencies can be quite dramatic near the critical times, and one of the leading order nonlinear terms in the $Q^2$ equation, specifically $NLP(1,3,0,\neq)$ in \S\ref{sec:NLPQ2}  below, cannot be bounded uniformly in $H^N$ if we only assume $\epsilon \lesssim \nu^{3/2}$. This term is a very 3D nonlinear interaction involving the Orr mechanism, the stretching of $Q^3$, and the lift-up effect of $U^1_0$ all at once.   
By allowing $Q^2$ to grow near the critical time until the dissipation can balance the growth, quantified by the inclusion of the decaying $m^{1/2}$ in the norm, one can complete the estimate -- hence \eqref{boundQ2}. 
However, the loss in \eqref{boundQ2} due to $m^{1/2}$ is too dramatic to close the scheme, indeed, the $m^{1/2}$ permits a growth on $Q^2$ which in turn limits the amount of inviscid damping on $U^2$, however, this inviscid damping provides a kind of null structure which damps some nonlinear terms that would otherwise be uncontrollable. 
The solution is to pay regularity and get a better uniform estimate at lower frequencies, as expressed in \eqref{boundQ2low} -- the gap of one derivative is roughly analogous to the fact that paying one derivative will give one power of $t^{-1}$ decay in an estimate such as \eqref{ineq:LinID}.  

\subsection{Choice of constants} 
Three constants have not been specified yet: $\epsilon \nu^{-3/2} = \delta$, which appears in the statement of Theorem \ref{mainthm}, and $C_0$ and $C_1$, which appear in the above bootstrap estimates. 
In the course of the proof, we choose them small such that
\begin{align*}
\frac{1}{C_0} + \frac{C_0}{C_1} + C_1 \delta + C_0 \delta < \frac{1}{A},
\end{align*} 
for a universal constant $A = A(\sigma)$ which depends only on $\sigma$. 
Specifically, this means that one first fixes $C_0$, then $C_1$ dependent on $C_0$, and then finally $\delta$ small relative to both.  

\subsection{Estimates following immediately from the bootstrap hypotheses}
This section is to outline some of the consequences of the bootstrap hypotheses. 

The first lemma is a simple result of the Sobolev product law, the geometric series representation \eqref{eq:coefGeom}, and the bootstrap hypotheses (also recall the shorthand \eqref{def:G}). 
\begin{lemma} \label{lem:CoefCtrl} 
Under the bootstrap hypotheses, for $\epsilon \nu^{-1}$ sufficiently small and $1 < s \leq N+2$,  
\begin{align*}
\norm{\psi_y}_{H^{s}}  + \norm{\psi_z}_{H^{s}} + \norm{G}_{H^{s}} \lesssim \norm{\grad C}_{H^{s}}.  
\end{align*}
As a consequence, there holds for all $i,j \in \set{X,Y,Z}$,  
\begin{subequations} \label{ineq:gradtL} 
\begin{align}
\norm{\partial_Y^t f}_{H^s} & \lesssim \norm{\partial_Y^L f}_{H^s} \quad\quad\quad\quad s \leq N+1, \\ 
\norm{\partial_Z^t f}_{H^s} \lesssim \norm{\partial_Z f}_{H^s} + \epsilon \nu^{-1} \norm{\partial_Y^L f}_{H^s} & \lesssim \norm{\grad_L f}_{H^s} \quad\quad\quad\quad s \leq N+1, \\  
\norm{\Delta_t f_{\neq}}_{H^s} + \norm{\partial_i^t \partial_j^t f_{\neq}}_{H^s} & \lesssim \norm{\Delta_L f_{\neq}}_{H^s} \quad\quad\quad\quad s \leq N \\ 
\norm{\Delta_t f_0}_{H^s} + \norm{\partial_i^t \partial_j^t f_0}_{H^s} & \lesssim \norm{\Delta f_0}_{H^s} + \epsilon \nu^{-1} \norm{\grad f_0}_{H^s} \quad\quad s \leq N. 
\end{align}
\end{subequations}
Similarly, by using also Lemma \ref{lem:mcomm}, we have for all $\alpha \in [0,1]$ and $3/2< s \leq N$,   
\begin{align}
\norm{m^{\alpha} \partial_j^t f_{\neq}}_{H^{s}} & \lesssim \left(1 + \norm{\grad C}_{H^{s + 2\alpha}}\right)\norm{\grad_L m^{\alpha} f}_{H^s}. \label{ineq:mdjtf}
\end{align}
\end{lemma}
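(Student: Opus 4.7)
\textbf{Proof plan for Lemma \ref{lem:CoefCtrl}.} The plan is to handle the three blocks of inequalities in order, with the key input being the smallness of $\|C\|_{L^\infty H^{N+2}}$ coming from the bootstrap bound \eqref{boundC}: under the hypothesis $\epsilon \nu^{-1} \ll 1$, the quantity $\|\partial_Y C\|_{H^{s}}$ is much less than $1$ for every $1 < s \leq N+2$. Since $C = C(t,Y,Z)$ is two-dimensional in $(Y,Z)$, the appropriate product estimate is \eqref{LPprod2p5D}, which allows multiplication in $H^s$ for any $s > 1$ provided the ``low-frequency'' factor depending only on $(Y,Z)$ lies in $H^{1+}$.

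First I would treat the bounds on $\psi_y,\psi_z,G$. Plug the geometric series representation \eqref{eq:coefGeom} directly into $\|\cdot\|_{H^s}$ and estimate term by term: writing $\psi_y = \partial_Y C\sum_{j\ge 0}(\partial_Y C)^j$, repeated application of \eqref{LPprod2p5D} (with the $(Y,Z)$-dependent factor $\partial_Y C$ always playing the low role) yields
\begin{equation*}
\|\psi_y\|_{H^s} \;\lesssim\; \|\partial_Y C\|_{H^s} \sum_{j\ge 0}\bigl(c\|\partial_Y C\|_{H^{1+}}\bigr)^{j} \;\lesssim\; \|\grad C\|_{H^s},
\end{equation*}
the geometric series converging because of the smallness of $\|C\|_{H^{N+2}}$. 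The same argument gives $\|\psi_z\|_{H^s} \lesssim \|\grad C\|_{H^s}$. For $G = 2\psi_y + \psi_y^2 + \psi_z^2$, the bounds just established combined with the algebra property of $H^s$ for $s>3/2$ (or once more with \eqref{LPprod2p5D} since $\psi_{y},\psi_{z}$ depend only on $(Y,Z)$) close the estimate.

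Next I would address the inequalities \eqref{ineq:gradtL}. Using $\partial_Y^t F = (1+\psi_y)\partial_Y^L F$ together with \eqref{LPprod2p5D} and $\|\psi_y\|_{H^s}\lesssim \epsilon\nu^{-1}\ll 1$, I obtain $\|\partial_Y^t F\|_{H^s} \lesssim \|\partial_Y^L F\|_{H^s}$. For $\partial_Z^t F = \partial_Z F + \psi_z \partial_Y^L F$, the same product estimate yields the intermediate bound, and then smallness of $\psi_z$ allows me to absorb the extra term into $\|\grad_L F\|_{H^s}$. For $\Delta_t$ and $\partial_i^t\partial_j^t$, I expand as in the definitions preceding the lemma and split into $P_{\neq}$ and $P_0$ parts: on non-zero $X$-modes, $\partial_Y^L = \partial_Y - t\partial_X$ and $\Delta_L$ together control every derivative without loss, so all geometric coefficients multiply factors of $\|\Delta_L f_{\neq}\|_{H^s}$; on zero $X$-modes, $\partial_Y^L = \partial_Y$, and the derivative of $C$ that appears in $\Delta_t C\,\partial_Y^L$ (which does not appear in $\widetilde{\Delta}_t$) is the source of the additional $\epsilon\nu^{-1}\|\grad f_0\|_{H^s}$ error term.

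The genuinely new point, and where I expect the main obstacle to lie, is the final inequality \eqref{ineq:mdjtf}, in which the Fourier multiplier $m^\alpha$ appears. Because $m^\alpha$ is a frequency multiplier while $\partial_j^t$ involves multiplication by $\psi_y,\psi_z$ (functions of $(Y,Z)$), these two operators do not commute, and I cannot simply move $m^\alpha$ inside. The plan is to write $m^\alpha \partial_j^t f_{\neq} = \partial_j^L (m^\alpha f_{\neq}) + m^\alpha(\partial_j^t - \partial_j^L) f_{\neq} + [m^\alpha,\partial_j^L] f_{\neq}$, handle the first term by the already-proved product bounds, handle the second as $m^\alpha$ times $\psi_y\partial_Y^L$ or $\psi_z \partial_Y^L$ (using \eqref{LPprod2p5D} and the $\psi_y,\psi_z$ bound proved above), and use Lemma \ref{lem:mcomm} for the commutator. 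The latter costs $2\alpha$ derivatives on the coefficient (reflecting the fact that $m^{-1}$ can be as large as $\nu^{-2/3}$ and the symbol of $m^\alpha$ has derivatives that effectively shift frequencies by $\jap{\nabla}^{2\alpha}$), which is exactly why the right-hand side involves $\|\grad C\|_{H^{s+2\alpha}}$ rather than $\|\grad C\|_{H^s}$. Summing everything delivers \eqref{ineq:mdjtf}.
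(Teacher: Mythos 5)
Your proposal is correct and follows essentially the route the paper intends: the paper offers no written proof beyond attributing the lemma to the Sobolev product law, the geometric series representation \eqref{eq:coefGeom}, the bootstrap control on $C$, and Lemma \ref{lem:mcomm} for \eqref{ineq:mdjtf}, which is exactly what you flesh out. One small remark: the commutator $[m^\alpha,\partial_j^L]$ in your decomposition vanishes identically (both are Fourier multipliers), so the place where Lemma \ref{lem:mcomm} and the $2\alpha$-derivative loss on $\grad C$ genuinely enter is in commuting $m^\alpha$ past the multiplication by $\psi_y,\psi_z$ inside the term $m^\alpha(\partial_j^t-\partial_j^L)f_{\neq}$ — which is in effect what your closing sentences describe, so the argument still closes as written.
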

\begin{remark} 
Note that for $s + 2\alpha \leq N+1$ the leading factor in \eqref{ineq:mdjtf} can be ignored by the $L^\infty H^{N+2}$ control on $C$. 
\end{remark}
 
An important consequence of \eqref{ineq:gradtL} is that in many places, the difference between $\partial_i^t$ and $\partial_i^L$ is irrelevant, however, the difference cannot be neglected everywhere. 
For example, for $s > 3/2$ there holds  
\begin{align}
\jap{f,\partial_Y^t g}_{H^s} \lesssim \norm{\grad_L f}_{H^s} \norm{g}_{H^s} + \norm{\grad^2 C}_{H^s} \norm{f}_{H^s} \norm{g}_{H^s}; \label{ineq:0freqIBP}
\end{align}
indeed this is proved by integrating by parts and Cauchy-Schwarz. 
If $s \leq N$ and the frequency of $f$ is non-zero, then the second term in \eqref{ineq:0freqIBP} is controlled by the first:  
\begin{align}
\jap{f_{\neq},\partial_Y^t g}_{H^s} \lesssim \norm{\grad_L f_{\neq}}_{H^s} \norm{g}_{H^s}. \label{ineq:neqfreqIBP}
\end{align}
However, at the zero frequency, we need both terms in \eqref{ineq:0freqIBP}. 
Similar inequalities hold also for $\partial_Z^t$. 
One also has the following variant for $\alpha \in [0,1]$, which is useful in many places, 
\begin{align}
\jap{m^{\alpha} f_{\neq},m^{\alpha}\left(\partial_Y^t g\right)}_{H^s} \lesssim \norm{\grad_L m^\alpha f_{\neq}}_{H^s} \norm{m^{\min(\alpha,1/2)} g}_{H^s}. \label{ineq:neqfreqIBPm}
\end{align}

The next proposition consists of those estimates which follow directly from the estimates on $Q^i$ and the elliptic lemmas detailed in \S\ref{sec:PEL}.  
These elliptic lemmas provide the technical tools for understanding $\Delta_t^{-1}$, important for recovering $U^i$ from $Q^i = \Delta_t U^i$. 

\begin{prop}[Basic a priori estimates on the velocity in $H^N$] \label{lem:BasicApriori} 
Under the bootstrap hypotheses, for $\epsilon \nu^{-3/2}$ sufficiently small, the following additional estimates hold: 
\begin{subequations}
\begin{align}
\norm{\jap{t}^{-1} U_0^1(t)}_{L^\infty H^{N+2}} & \lesssim \epsilon \\ 
\norm{U_0^1}_{L^\infty H^{N+2}} + \nu^{1/2} \norm{\grad U_0^1}_{L^2 H^{N+2}} &  \lesssim \epsilon \nu^{-1} \\
\norm{U_0^2}_{L^\infty H^{N+2}} + \nu^{1/2} \norm{\grad U_0^2}_{L^2 H^{N+2}} &  \lesssim \epsilon \\ 
\norm{U_0^3}_{L^\infty H^{N+2}} + \nu^{1/2} \norm{\grad U_0^3}_{L^2 H^{N+2}} &  \lesssim \epsilon \\
\norm{U_{\neq}^1}_{L^\infty H^N} + \nu^{1/2} \norm{\grad_L U_{\neq}^1}_{L^2 H^N} + \norm{\sqrt{-\dot{M} M} U_{\neq}^1}_{L^2 H^N}  & \lesssim \epsilon \nu^{-1/3} \label{ineq:U1Boot} \\ 
\norm{U_{\neq}^2}_{L^\infty H^N} + \nu^{1/2} \norm{\grad_L U_{\neq}^2}_{L^2 H^N} + \norm{\sqrt{-\dot{M} M} U_{\neq}^2}_{L^2 H^N} & \lesssim \epsilon \label{ineq:U2Boot} \\ 
\norm{U_{\neq}^3}_{L^\infty H^N} + \nu^{1/2} \norm{\grad_L U_{\neq}^3}_{L^2 H^N} + \norm{\sqrt{-\dot{M} M} U_{\neq}^3}_{L^2 H^N} & \lesssim \epsilon \label{ineq:U3Boot} \\ 
\norm{m \Delta_L U_{\neq}^1}_{L^\infty H^N} + \nu^{1/2} \norm{m \grad_L \Delta_L  U_{\neq}^1}_{L^2 H^N} + \norm{\sqrt{-\dot{M} M} m \Delta_L  U_{\neq}^1}_{L^2 H^N}  & \lesssim \epsilon \nu^{-1/3} \label{ineq:mU1Boot} \\ 
\norm{m^{1/2}\Delta_L U_{\neq}^2}_{L^\infty H^N} + \nu^{1/2} \norm{m^{1/2}\grad_L \Delta_L  U_{\neq}^2}_{L^2 H^N} + \norm{\sqrt{-\dot{M} M} m^{1/2}\Delta_L  U_{\neq}^2}_{L^2 H^N} & \lesssim \epsilon \label{ineq:mU2Boot} \\ 
\norm{m\Delta_L U_{\neq}^3}_{L^\infty H^N} + \nu^{1/2} \norm{m\grad_L \Delta_L  U_{\neq}^3}_{L^2 H^N} + \norm{\sqrt{-\dot{M} M}m \Delta_L  U_{\neq}^3}_{L^2 H^N} & \lesssim \epsilon. \label{ineq:mU3Boot} 
\end{align}
\end{subequations} 
\end{prop}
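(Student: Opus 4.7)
The plan is to exploit the pointwise relation $Q^i = \Delta_t U^i$ together with the elliptic lemmas announced in \S\ref{sec:PEL}, separating zero and non-zero $X$-frequencies. Writing
\[
\Delta_t = \Delta_L + G\,\partial_{YY}^L + 2\psi_z\, \partial_{YZ}^L + (\Delta_t C)\,\partial_Y^L,
\]
I would move the coordinate corrections to the right-hand side so that $U^i$ satisfies an elliptic equation with leading source $Q^i$ plus a perturbatively small remainder. The smallness is quantitative: Lemma~\ref{lem:CoefCtrl} combined with \eqref{boundC} gives $\norm{G}_{H^{N+1}} + \norm{\psi_z}_{H^{N+1}} + \norm{\Delta_t C}_{H^{N}} \lesssim C_1\epsilon\nu^{-1} \lesssim C_1\delta\nu^{1/2}$, which is absorbable thanks to the choice-of-constants hierarchy. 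All of the bounds in the proposition are then translations of the $Q^i$ bootstrap bounds \eqref{boundQ10short}--\eqref{boundQ2low} through appropriate elliptic inversions.

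For the zero modes, $\Delta_L$ coincides with the $(Y,Z)$-Laplacian, so standard 2D elliptic regularity on $\Real\times\Torus$ gives $\norm{U^i_0}_{H^{N+2}} \lesssim \norm{Q^i_0}_{H^N}$ up to corrections that are absorbed by the smallness above; the $L^\infty H^{N+2}$ bounds follow from \eqref{boundQ10short}--\eqref{boundQ10}, and the $\nu^{1/2}\norm{\nabla U^i_0}_{L^2 H^{N+2}}$ bounds follow from the corresponding $L^2$ estimates on $\nabla Q^i_0$ by the same procedure. For non-zero modes I would invoke the elliptic lemmas to obtain $|\widehat{U^i_{\neq}}(k,\eta,l)| \lesssim (k^2 + (\eta-kt)^2 + l^2)^{-1} |\widehat{Q^i_{\neq}}|$ up to small corrections, then pre-multiply by the relevant weight ($M$, $mM$, $m^{1/2}M$, or $\sqrt{-\dot M M}$) and pass the weight through $\Delta_L^{-1}$. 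The key structural input here is \eqref{ineq:mDelTrick}, which shows that $m\,\Delta_L^{-1}$ has a controllable symbol even near critical times; this is precisely what generates the $\nu^{-1/3}$ scaling in \eqref{ineq:U1Boot} and \eqref{ineq:mU1Boot} from \eqref{boundQ1d}, and analogously produces the bounds \eqref{ineq:U2Boot}--\eqref{ineq:U3Boot} and \eqref{ineq:mU2Boot}--\eqref{ineq:mU3Boot} from \eqref{boundQ2}--\eqref{boundQ3} and \eqref{boundQ2low}.

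For the $\Delta_L U^i_{\neq}$ estimates in \eqref{ineq:mU1Boot}--\eqref{ineq:mU3Boot}, I would rearrange the splitting above into
\[
\Delta_L U^i_{\neq} = Q^i_{\neq} - \bigl(G\,\partial_{YY}^L + 2\psi_z\,\partial_{YZ}^L + (\Delta_t C)\,\partial_Y^L\bigr) U^i_{\neq},
\]
so that the $Q^i_{\neq}$ bootstrap supplies the leading term, while the correction terms are handled by the product estimates \eqref{LPprod}--\eqref{LPprod2p5D} together with the $U^i_{\neq}$ bounds just established and the smallness of $G, \psi_z, \Delta_t C$ from \eqref{boundC}. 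The main technical obstacle, and the step where I expect the real work to be concentrated, is verifying that the Fourier-side weights $M$, $m$ and their combinations pass through $\Delta_L^{-1}$ up to errors that are controllable by the bootstrap; this is exactly what the elliptic lemmas of \S\ref{sec:PEL} are designed to deliver, and the delicate point is the interaction of $m$ with the symbol of $\Delta_L$ near the critical times $t = \eta/k$, which is why the particular design of $m$ captured by \eqref{ineq:mDelTrick} is essential. Once these lemmas are invoked the proposition reduces to careful bookkeeping.
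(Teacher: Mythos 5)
Your overall route is the paper's: the paper obtains \eqref{ineq:mU1Boot}--\eqref{ineq:mU3Boot} for the nonzero frequencies by applying Lemmas \ref{lem:BasicPEL}, \ref{lem:EDPEL} and \ref{lem:dotMPEL} with $\phi = Q^i$ and the bootstrap bounds on $Q$ and $C$, and then deduces \eqref{ineq:U1Boot}--\eqref{ineq:U3Boot} from \eqref{ineq:mDelTrick}. Your reordering (velocity bounds first, then $\Delta_L U^i_{\neq}$ by rearranging $Q^i = \Delta_t U^i$) is a cosmetic variant, but note two points of bookkeeping: the correction terms $G\partial^L_{YY}U^i_{\neq}$, $\psi_z\partial^L_{YZ}U^i_{\neq}$ are second order, so they are \emph{not} controlled by ``the $U^i_{\neq}$ bounds just established''---they must be absorbed into the left-hand side; and passing $m^{\alpha}$ and $\sqrt{-\dot M M}$ through the variable coefficients costs the commutator losses of Lemmas \ref{lem:mcomm} and \ref{lem:postmultED} (extra factors $\nu^{-1/3}$, $\nu^{-1/6}$), so the absorption genuinely needs $\epsilon\nu^{-4/3}$ or $\epsilon\nu^{-3/2}$ small, not merely $\epsilon\nu^{-1}\lesssim \delta\nu^{1/2}$ as you assert. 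This is still covered by the hypothesis of the proposition, but your smallness accounting as written understates what the elliptic lemmas actually require.

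The genuine gap is in the zero-mode step. On $\Real_Y\times\Torus_Z$ the claim $\norm{U^i_0}_{H^{N+2}}\lesssim \norm{Q^i_0}_{H^N}$ ``up to corrections absorbed by smallness'' is false: the symbol of $\Delta^{-1}$ is $-(\eta^2+l^2)^{-1}$, which is unbounded near $\eta=0$, $l=0$, and (except for $U^2_0$, whose $l=0$ modes vanish by incompressibility) nothing forces $\widehat{Q^i_0}$ to vanish there; no smallness of $G$, $\psi_z$, $\Delta_t C$ can repair this, since it is a low-frequency issue, not a perturbative one. This is exactly why Lemma \ref{lem:ZeroModePEL} retains the terms $\norm{\Delta_t^{-1}\phi_0}_{L^2}$ and $\norm{\grad\Delta_t^{-1}\phi_0}_{L^2}$ on the right-hand side of \eqref{ineq:ExLow0}--\eqref{ineq:0BasicPEL}: the high frequencies of $U^i_0$ come from \eqref{boundQ10short}, \eqref{boundQ10}, \eqref{boundQ2}, \eqref{boundQ3}, but the low frequencies must be supplied by the bootstrap bounds on the velocity itself, \eqref{boundU10short}--\eqref{boundU30}, which your sketch never invokes (you cite only the $Q$ bounds). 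Once those are added and the zero-mode argument is routed through Lemma \ref{lem:ZeroModePEL}, your proof coincides with the paper's.
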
 
\begin{proof} 
The estimates on the zero frequencies follow rapidly from Lemma \ref{lem:ZeroModePEL} and the bootstrap hypotheses. 

By \eqref{ineq:mDelTrick}, the estimates \eqref{ineq:mU1Boot}, \eqref{ineq:mU2Boot}, \eqref{ineq:mU3Boot} imply \eqref{ineq:U1Boot}, \eqref{ineq:U2Boot}, \eqref{ineq:U3Boot}. 
The estimates \eqref{ineq:mU1Boot}, \eqref{ineq:mU2Boot}, \eqref{ineq:mU3Boot} follow from applying Lemmas \ref{lem:BasicPEL}, \ref{lem:EDPEL}, and \ref{lem:dotMPEL} and using the bootstrap hypotheses on $Q$ and $C$.   
\end{proof}

The next proposition details the inviscid damping of $U^2$ and the enhanced dissipation.  

\begin{prop} \label{prop:EDID}
Under the bootstrap hypotheses, the following additional estimates hold: 
\begin{itemize} 
\item the enhanced dissipation of $Q^i$: 
\begin{subequations} \label{ineq:QiDecay}
\begin{align}
\norm{m Q^1_{\neq}}_{L^2 H^N} & \lesssim \epsilon \nu^{-1/2} \\ 
\norm{m^{1/2} Q^2_{\neq}}_{L^2 H^N} + \norm{Q^2_{\neq}}_{L^2 H^{N-1}} & \lesssim \epsilon \nu^{-1/6}\\ 
\norm{m Q^3_{\neq}}_{L^2 H^N} & \lesssim \epsilon \nu^{-1/6}; 
\end{align}
\end{subequations} 
\item the enhanced dissipation and inviscid damping of $U^i$:  
\begin{subequations} 
\begin{align}
\norm{\grad_L U^2_{\neq}}_{L^2 H^N} & \lesssim \epsilon \nu^{-1/6}  \label{ineq:gradU2HN} \\ 
\norm{\grad_L U^2_{\neq}}_{L^2 H^{N-1}} & \lesssim \epsilon \label{ineq:gradU2} \\
\norm{\Delta_{X,Z} U^3_{\neq}}_{L^2 H^N} & \lesssim \epsilon \nu^{-1/6}  \label{ineq:U3Decay} \\ 
\norm{\Delta_{X,Z} U^1_{\neq}}_{L^2 H^{N}} & \lesssim \epsilon \nu^{-1/2} \label{ineq:U1Decay}  \\ 
\norm{U^1_{\neq}}_{L^2 H^{N-1}} & \lesssim \epsilon \nu^{-1/6}; \label{ineq:U1Decay2} 
\end{align}
\end{subequations}
\item the enhanced dissipation of $t U^i$: 
\begin{subequations} \label{ineq:tdecay}
\begin{align}
\norm{t \partial_X U^{1}_{\neq}}_{L^2 H^{N-1}} & \lesssim \epsilon \nu^{-5/6} \\
\norm{t \partial_XU^{1}_{\neq}}_{L^2 H^{N-2}} & \lesssim \epsilon \nu^{-1/2} \\
\norm{t \partial_X U^{3}_{\neq}}_{L^2 H^{N-1}} & \lesssim \epsilon \nu^{-1/2} \\
\norm{t \partial_X U^{2}_{\neq}}_{L^2 H^{N-2}} & \lesssim \epsilon \\
\norm{t \partial_X U^{2}_{\neq}}_{L^2 H^{N-1}} & \lesssim \epsilon \nu^{-1/6}. 
\end{align}
\end{subequations}
\end{itemize} 
\end{prop}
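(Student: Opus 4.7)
The plan is to combine the bootstrap hypotheses \eqref{boundsQ}, \eqref{boundsU} with the a priori bounds from Proposition \ref{lem:BasicApriori} and the enhanced-dissipation Lemma \ref{lem:postmultED}. The whole argument is a sequence of Fourier-side manipulations of $m$ and $M$ built on the key inequality \eqref{ineq:mDelTrick}.

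For the $Q^i$ estimates \eqref{ineq:QiDecay}, I apply Lemma \ref{lem:postmultED} directly to each of $mQ^1_{\neq}$, $m^{1/2}Q^2_{\neq}$, $mQ^3_{\neq}$, and to $Q^2_{\neq}$ at regularity $H^{N-1}$. The bootstrap hypotheses \eqref{boundQ1d}--\eqref{boundQ2low} give control on $\|\sqrt{-\dot{M}M}(\cdot)\|_{L^2H^N}$ and on $\nu^{1/2}\|\grad_L(\cdot)\|_{L^2H^N}$ (using that $M^0,M^1$ are bounded above and below uniformly and that commutators of $\grad_L$ with $m^\alpha$ are benign by the definition of $m$), and Lemma \ref{lem:postmultED} then produces the stated $\nu^{-1/6}$ losses. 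For the enhanced-dissipation estimates on $U^i$ in \eqref{ineq:U3Decay}, \eqref{ineq:U1Decay}, and \eqref{ineq:gradU2HN}, up to curvilinear corrections handled by the elliptic lemmas invoked in Proposition \ref{lem:BasicApriori}, one has on the Fourier side $|\widehat{\Delta_{X,Z}U^i_{\neq}}| = (k^2+\ell^2)|\widehat{U^i_{\neq}}| \lesssim m\,|\widehat{Q^i_{\neq}}|$ by \eqref{ineq:mDelTrick}, and analogously $|\widehat{\grad_L U^2_{\neq}}| \lesssim m^{1/2}|\widehat{\Delta_L U^2_{\neq}}|$ using $m^{1/2}|\xi_L|\gtrsim 1$; the target bounds then follow from the previous step and from Proposition \ref{lem:BasicApriori}. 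The estimate \eqref{ineq:U1Decay2} is just Lemma \ref{lem:postmultED} applied directly to $U^1_{\neq}$ via \eqref{boundU1neq}.

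The crucial and most delicate estimate, which I expect to be the main obstacle, is the inviscid-damping bound \eqref{ineq:gradU2}: $\|\grad_L U^2_{\neq}\|_{L^2H^{N-1}} \lesssim \epsilon$, with no negative power of $\nu$ whatsoever. A direct use of Lemma \ref{lem:postmultED} on $m^{1/2}Q^2_{\neq}$ costs $\nu^{-1/6}$, which is far too lossy. The trick is to exploit exactly the way the multipliers $M^0$ and $M^1$ were designed: $-\dot{M}/M \gtrsim k^2/|\xi_L|^2 + \jap{k\ell}/|\xi_L|^2$, and for $k \neq 0$ one always has $\max(k^2,\jap{k\ell}) \geq 1$, so $(-\dot{M})M \gtrsim |\xi_L|^{-2}$ pointwise in Fourier. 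Since $|\widehat{\grad_L U^2_{\neq}}| \lesssim |\widehat{Q^2_{\neq}}|/|\xi_L|$, Parseval gives
\[
\|\grad_L U^2_{\neq}\|_{L^2H^{N-1}}^2 \lesssim \iint |\widehat{Q^2_{\neq}}|^2 \jap{\xi}^{2(N-1)}|\xi_L|^{-2}\,d\xi\,dt \lesssim \|\sqrt{-\dot{M}M}\,Q^2_{\neq}\|_{L^2H^{N-1}}^2 \lesssim \epsilon^2
\]
directly from \eqref{boundQ2low}. The cost is one derivative of regularity, which is precisely why the weaker low-regularity bound \eqref{boundQ2low} was built into the bootstrap in the first place.

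Finally, the weighted estimates \eqref{ineq:tdecay} follow by exploiting the Fourier identity $tk = \eta - (\eta - kt)$, which gives $|\widehat{t\partial_X f_{\neq}}| \lesssim \jap{\eta}|\widehat{f_{\neq}}| + |\widehat{\partial_Y^L f_{\neq}}|$ and so costs one derivative on $f$ in exchange for one copy of $\grad_L f$. Applying this to $U^1_{\neq}, U^2_{\neq}, U^3_{\neq}$ and combining with the previously established enhanced-dissipation and inviscid-damping bounds yields each line of \eqref{ineq:tdecay}, with the $\nu$-power in each case determined by the dominant underlying bound.
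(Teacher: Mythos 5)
Your proposal is correct and follows essentially the same route as the paper: Lemma~\ref{lem:postmultED} plus the bootstrap bounds for \eqref{ineq:QiDecay}, the pointwise inequalities $m^{1/2}|\xi_L|\gtrsim 1$ and \eqref{ineq:mDelTrick} together with Proposition~\ref{lem:BasicApriori} for the $U^i$ bounds, the observation that $-\dot{M}M\gtrsim |\xi_L|^{-2}$ for $k\neq 0$ combined with the low-regularity bound \eqref{boundQ2low} for the key estimate \eqref{ineq:gradU2}, and the trade of one derivative for one factor of $\grad_L$ (via $kt=\eta-(\eta-kt)$, equivalent to the paper's $|kt|\lesssim\jap{\eta}\jap{\eta-kt}$) for \eqref{ineq:tdecay}. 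The one gloss is that in your display for \eqref{ineq:gradU2} you write $\widehat{Q^2_{\neq}}$ where the Fourier-side identity actually produces $\widehat{\Delta_L U^2_{\neq}}$; since $Q^2=\Delta_t U^2\neq\Delta_L U^2$, passing between the two requires the weighted elliptic estimate of Lemma~\ref{lem:dotMPEL} (whose error term is controlled by $\nu^{1/2}\norm{M\grad_L Q^2_{\neq}}_{L^2H^{N-1}}$ from \eqref{boundQ2low}), which is exactly what the paper invokes at this step and is consistent with your remark that curvilinear corrections are absorbed by the elliptic lemmas.
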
 
\begin{proof} The enhanced dissipation estimates on $Q^i$ follow from Lemma~\ref{lem:postmultED} and the bootstrap estimates on $Q$.
Turning to~\eqref{ineq:gradU2HN},  by \eqref{mlowbound} and Lemma \ref{lem:postmultED}, 
\begin{align*}
\norm{\grad_L U^2_{\neq}}_{H^N} & \lesssim \norm{m^{1/2} \Delta_L U^2}_{H^N}  \\ 
& \lesssim \nu^{-1/6}\left(\norm{m^{1/2} \sqrt{-\dot{M} M} \Delta_L U^2}_{H^N} + \nu^{1/2}\norm{m^{1/2} \grad_L \Delta_L U^2}_{H^N}\right), 
\end{align*}
from which the estimate follows from Proposition \ref{lem:BasicApriori}. 
For~\eqref{ineq:gradU2}, we use the definition of $M$: 
\begin{align*}
\norm{\grad_L U^2_{\neq}}_{H^{N-1}} & \lesssim \norm{\sqrt{-\dot{M} M} \Delta_L U^2}_{H^{N-1}}, 
\end{align*}
from which the estimate follows from Lemma \ref{lem:dotMPEL} and the a priori estimate \eqref{boundQ2low}.  

To deduce \eqref{ineq:U3Decay}, we use \eqref{ineq:mDelTrick} followed by Lemma \ref{lem:postmultED} to derive 
\begin{align*}
\norm{\Delta_{X,Z} U^3_{\neq}}_{H^N} & \lesssim \norm{m \Delta_L U^3_{\neq}}_{H^N}  \\ 
& \lesssim \nu^{-1/6}\left(\norm{m \sqrt{- \dot{M} M} \Delta_L U^3_{\neq}}_{H^N} + \nu^{1/2}\norm{m \grad_L \Delta_L U^3_{\neq}}_{H^N}\right);   
\end{align*} 
after which the estimates follow from Proposition \ref{lem:BasicApriori}. 
The estimates on $U^1$ in \eqref{ineq:U1Decay} and \eqref{ineq:U1Decay2} follow similarly. 

Turn next to the enhanced dissipation estimates involving powers of $t$ in \eqref{ineq:tdecay}.  
For example, we have by \eqref{ineq:mDelTrick} and $\abs{kt} \lesssim \jap{\eta-kt}\jap{\eta}$: 
\begin{align*}
\norm{t\partial_X  U^1_{\neq}}_{H^{N-1}} & \lesssim \norm{\grad_L m \Delta_L U^1_{\neq}}_{H^N};  
\end{align*}
and similarly for $t\partial_X U^3$. 
For $t \partial_X U^2$ we again use $\abs{kt} \lesssim \jap{\eta-kt}\jap{\eta}$: 
\begin{align*}
\norm{t\partial_X U^2_{\neq}}_{H^{N-2}} \lesssim \norm{\grad_L U^2_{\neq}}_{H^{N-1}}, 
\end{align*}
which is then controlled by \eqref{ineq:gradU2}; similarly for the analogous inequality in $H^{N-1}$. 
\end{proof} 

In what follows we will use the shorthand 
\begin{subequations} \label{ABshort} 
\begin{align}
A & = m^{1/2} M \jap{D}^N \\
B & = m M \jap{D}^N. 
\end{align}
\end{subequations}

\subsection{Equivalence of coordinate systems} \label{sec:coordstuff}
Coordinate systems of the general type \eqref{def:cords} have been used in \cite{BM13,BMV14,BGM15I,BGM15II} and here we may follow a similar scheme for how to transfer information from one coordinate system to the other; we will give a sketch for completeness.  
In Sobolev regularity the technical details are significantly simpler as compositions behave well in finite regularity classes. 
In particular, we have the following composition lemma; if $s' \in \mathbb N$ this is immediate from the Fa\'a di Bruno formula and Sobolev embedding, for fractional $s$, see e.g. \cite{Inci2013} for a proof.  
\begin{lemma}[Sobolev composition] \label{lem:SobComp} 
Let $s > 5/2$, $s \geq s' \geq 0$, $f \in H^{s'}$, and $g \in H^s$ be such that $\norm{\grad g}_{\infty} < 1$. Then, there holds 
\begin{align*}
\norm{f \circ (Id + g)}_{H^{s'}} \lesssim_{\norm{g}_{H^s}, 1 - \norm{\grad g}_{\infty},s,s'} \norm{f}_{H^{s'}}. 
\end{align*}  
\end{lemma}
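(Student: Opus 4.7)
The plan is to reduce to the case of integer $s'$ by interpolation and handle the integer case via the chain rule combined with Moser-type product estimates. Throughout, the hypothesis $\|\nabla g\|_\infty < 1$ guarantees that $\Phi := \mathrm{Id} + g$ is a global bi-Lipschitz diffeomorphism of $\mathbb{R}^3$ onto itself: the Jacobian $\det(I + \nabla g)$ is pointwise bounded between $(1-\|\nabla g\|_\infty)^3$ and $(1+\|\nabla g\|_\infty)^3$, so a straight change of variables gives the $L^2$ baseline
\begin{equation*}
\|f \circ \Phi\|_{L^2}^2 = \int |f(\Phi(x))|^2 \,dx = \int |f(y)|^2 |\det D\Phi^{-1}(y)| \, dy \lesssim_{1-\|\nabla g\|_\infty} \|f\|_{L^2}^2.
\end{equation*}

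First I would handle $s' = k \in \mathbb{N}$, $k \leq s$. By the Fa\`a di Bruno formula, for any multi-index $\alpha$ with $|\alpha| = k$, $\partial^\alpha(f \circ \Phi)$ is a finite sum of terms of the form $(\partial^\beta f) \circ \Phi \cdot \prod_{j=1}^{|\beta|} \partial^{\gamma_j}\Phi$, where $1 \leq |\beta| \leq k$ and $\sum |\gamma_j| = k$. Put each factor in $L^{p_j}$ with $\sum 1/p_j = 1/2$, placing the single factor of lowest derivative count in $L^2$ after change of variables and the remaining $\partial^{\gamma_j}\Phi$'s in appropriate $L^{p_j}$. Since $s > 5/2$ gives $H^{s-1} \hookrightarrow L^\infty$ with room to spare, one applies the standard Moser interpolation inequality (Gagliardo--Nirenberg) to write $\|\partial^{\gamma_j} g\|_{L^{p_j}} \lesssim \|g\|_{H^s}^{\theta_j} \|\nabla g\|_\infty^{1-\theta_j}$ with a compatible choice of exponents. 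Collecting terms and using the $L^2$ change-of-variables estimate on the factor $(\partial^\beta f) \circ \Phi$ yields $\|f \circ \Phi\|_{H^k} \lesssim_{\|g\|_{H^s},\, 1-\|\nabla g\|_\infty,\, k,\, s} \|f\|_{H^k}$.

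For fractional $s' \in (0,s)$, I would extend by real interpolation: the composition operator $T_\Phi : f \mapsto f \circ \Phi$ is bounded on $L^2 = H^0$ and on $H^{\lceil s' \rceil}$ (provided $\lceil s'\rceil \leq s$; otherwise use $H^{\lfloor s \rfloor}$ and handle the small remaining range separately), with operator norms controlled by $\|g\|_{H^s}$ and $1-\|\nabla g\|_\infty$. Real interpolation of Bessel potential spaces then yields the bound on $H^{s'}$. Alternatively one may use a Littlewood--Paley/paraproduct decomposition of $f \circ \Phi$ and estimate via the Bony decomposition, but for the present purpose the interpolation route is cleaner. As the authors note, a self-contained proof covering all fractional exponents can be found in \cite{Inci2013}.

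The main obstacle is keeping the exponents in the Moser step consistent across all Fa\`a di Bruno terms when $s$ is only slightly above $5/2$: here the high-order derivatives of $g$ are not in $L^\infty$, so every such factor must be interpolated between $H^s$ and the pointwise bound on $\nabla g$, and one has to verify that the resulting constants indeed depend only on the quantities named in the statement. The dependence on $1-\|\nabla g\|_\infty$ enters both through the change-of-variables Jacobian and through the requirement that $\Phi^{-1}$ itself lies in a suitable Sobolev class when one reverses the inequality to bound lower regularity terms.
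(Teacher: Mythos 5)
Your proposal is correct and takes essentially the same route as the paper, which offers no detailed proof: it notes that the integer case is immediate from the Fa\`a di Bruno formula together with Sobolev embedding (your chain-rule/Moser argument) and cites \cite{Inci2013} for fractional exponents. The only caveat is that your interpolation between $L^2$ and $H^{\lfloor s\rfloor}$ does not by itself reach $s'\in(\lfloor s\rfloor,s]$ when $s$ is not an integer, but since you, like the paper, defer the genuinely fractional range to \cite{Inci2013}, the argument is complete as stated.
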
 

We also need a Sobolev inverse function theorem, which follows by straightforward arguments using Lemma \ref{lem:SobComp}.  

\begin{lemma}[Sobolev inverse function theorem] \label{lem:SobInv} 
Let $s > 5/2$. Then, there exists an $\epsilon_0 = \epsilon_0(s)$ such that if $\norm{\alpha}_{H^s} \leq \epsilon_0$, then there exists a unique solution $\beta$ to 
\begin{align*}
\beta(y) = \alpha(y + \beta(y)),
\end{align*}
which satisfies $\norm{\beta}_{H^s} \lesssim \epsilon_0$. 
\end{lemma}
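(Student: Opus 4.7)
The plan is to realize $\beta$ as the fixed point of the map $T\beta := \alpha \circ (\mathrm{Id} + \beta)$ via a Banach-type contraction argument. Since $s > 5/2$ yields the Sobolev embedding $H^s \hookrightarrow C^1$ and makes $H^{s-1}$ a Banach algebra, the compositions below are well-defined for $\beta$ in a small $H^s$-ball. The one subtlety is the usual loss-of-derivative problem: differencing $\alpha \circ (\mathrm{Id} + \beta)$ introduces $\nabla \alpha$, which only lies in $H^{s-1}$. My plan is therefore the standard one — establish invariance of a closed ball in $H^s$ and run the actual contraction in $H^{s-1}$.

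For the invariance, I set $B_r := \{\beta \in H^s : \norm{\beta}_{H^s} \leq r\}$. Taking $r$ small enough that Sobolev embedding gives $\norm{\nabla \beta}_\infty < 1$ for $\beta \in B_r$, Lemma \ref{lem:SobComp} applied with $f = \alpha$, $g = \beta$, $s' = s$ yields $\norm{T\beta}_{H^s} \leq C(r) \norm{\alpha}_{H^s}$ with $C(r)$ bounded as $r \to 0$. Choosing $r = 2 C(0) \epsilon_0$ and then $\epsilon_0$ small enough shows $T(B_r) \subseteq B_r$ and simultaneously gives the final size bound $\norm{\beta}_{H^s} \lesssim \epsilon_0$. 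For the contraction, I would write
\begin{equation*}
T\beta_1 - T\beta_2 = (\beta_1 - \beta_2) \cdot \int_0^1 (\nabla\alpha) \circ (\mathrm{Id} + \beta_2 + \tau(\beta_1 - \beta_2))\, d\tau,
\end{equation*}
apply Lemma \ref{lem:SobComp} at regularity $s' = s - 1$ to each composed factor $\nabla\alpha \circ (\mathrm{Id} + \cdot)$ (noting that $\nabla\alpha \in H^{s-1}$ and the inner perturbation sits in $B_r \subset H^s$), and use that $H^{s-1}$ is an algebra (since $s - 1 > 3/2$) to obtain
\begin{equation*}
\norm{T\beta_1 - T\beta_2}_{H^{s-1}} \lesssim \norm{\alpha}_{H^s}\,\norm{\beta_1 - \beta_2}_{H^{s-1}} \lesssim \epsilon_0 \norm{\beta_1 - \beta_2}_{H^{s-1}}.
\end{equation*}
For $\epsilon_0$ sufficiently small this is a strict contraction in the $H^{s-1}$ topology.

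To conclude, I would run the Picard iteration $\beta_0 := 0$, $\beta_{n+1} := T\beta_n$. The invariance step keeps $\norm{\beta_n}_{H^s} \leq r$ uniformly in $n$, and the contraction step makes $(\beta_n)$ Cauchy in $H^{s-1}$, with limit $\beta_*$. Weak-$*$ compactness of the closed $H^s$-ball, together with lower semicontinuity of $\norm{\cdot}_{H^s}$, then place $\beta_* \in H^s$ with $\norm{\beta_*}_{H^s} \leq r \lesssim \epsilon_0$, while continuity of $T$ in $H^{s-1}$ (immediate from the same contraction estimate) gives $T\beta_* = \beta_*$. Uniqueness in $B_r$ follows directly from the $H^{s-1}$ contraction. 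The main — and really only — obstacle is the derivative-loss mismatch between the regularity at which invariance holds and that at which contraction is available; this is what forces the weak-compactness upgrade at the final step rather than a direct fixed-point iteration in $H^s$.
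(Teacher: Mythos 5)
Your proof is correct, and it is exactly the kind of argument the paper has in mind: the paper gives no details, asserting only that the lemma ``follows by straightforward arguments using Lemma \ref{lem:SobComp}''. Your fixed-point scheme --- ball invariance in $H^s$ via Lemma \ref{lem:SobComp}, contraction in the weaker $H^{s-1}$ topology to handle the loss of a derivative on $\nabla\alpha$, and the weak-compactness/lower-semicontinuity upgrade of the limit back to $H^s$ --- is a sound and standard realization of that omitted argument, with uniqueness in the small ball being the natural reading of the statement.
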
 

The next step is to prove Lemma \ref{lem:BootStart} and also deduce that we may take $T > 1$, the $T$ such that the bootstrap hypotheses \eqref{boundsQ}, \eqref{boundsU}, \eqref{boundsC} hold.  
Hence, we do not need to worry about the coordinate singularity at $t = 0$.    

\begin{lemma} \label{lem:BootStart2}
For $\epsilon \nu^{-3/2}$ sufficiently small, Lemma \ref{lem:BootStart} holds, we may take $2 \leq T$ (defined in \S\ref{sec:boot} above), and for $t \leq 2$, the inequalities \eqref{boundsQ}, \eqref{boundsU}, \eqref{boundsC} all hold with constant $2$ instead of $8$.   
\end{lemma}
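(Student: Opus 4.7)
The plan is to establish Lemma \ref{lem:BootStart} via direct short-time $H^{N+2}$ energy estimates in the original coordinate system, and then to transfer the resulting bounds at $t=2$ to the system \eqref{def:MainSys}--\eqref{eq:Cg} using the Sobolev composition Lemmas \ref{lem:SobComp} and \ref{lem:SobInv}. The key observation is that on $[0,2]$ every time-dependent object in the problem --- the Fourier multipliers $m$ and $M$, the linear amplification factors, and the Couette shift $x\mapsto x-ty$ --- is of order $\mathcal O(1)$, with constants depending only on the length of the interval.

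First I would invoke Lemma \ref{lem:Cont} to produce a unique strong solution $u\in C([0,T^\star];H^{N+2})$; a standard continuation argument then reduces the problem to a priori estimates on $[0,T^\star\wedge 2]$. To avoid the unbounded coefficient $y$ in the Couette transport, I pass to the shifted variable $\bar q^i(t,\bar x,y,z):=q^i(t,\bar x+ty,y,z)$, which satisfies \eqref{eq:qi} with $y\partial_x$ removed and $\nabla$ replaced by $\nabla_L$. This shift is a unitary isomorphism on every $H^s$, and on $[0,2]$ the symbol bound $1/(k^2+(\eta-kt)^2+\ell^2)\lesssim 1/(k^2+\eta^2+\ell^2)$ yields $\norm{\bar u}_{H^{N+2}}\lesssim\norm{\bar q}_{H^N}$ with a time-independent constant. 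Using the algebra property of $H^N$ (valid since $N>5/2$), together with $\norm{\partial_j^L f}_{H^s}\lesssim(1+t)\norm{f}_{H^{s+1}}$, I would bound every nonlinear term in the $\bar q^i$ equation by $C\epsilon\norm{\bar q}_{H^N}^2$. For $\bar q^2$, which has no linear forcing, Grönwall yields $\norm{\bar q^2(t)}_{H^N}\leq 2\epsilon$ on $[0,2]$, while for $\bar q^{1,3}$ the linear $LU$, $LS$ and $LP$ terms contribute only a bounded polynomial factor in $t$ and yield $\norm{\bar q^{1,3}(t)}_{H^N}\leq 2C_0\epsilon$ once $C_0$ is chosen larger than the universal constant produced.

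For the coordinate function, I would work with $\Psi:=t\psi$: from \eqref{def:psi}, $\partial_t\Psi+u_0\cdot\nabla\Psi=u_0^1-tu_0^2+\nu\Delta\Psi$ with $\Psi(0)=0$, and a direct $H^{N+2}$ energy estimate with forcing of size $\norm{u_0^1}_{H^{N+2}}+t\norm{u_0^2}_{H^{N+2}}\lesssim C_0\epsilon$ delivers $\norm{\Psi(t)}_{H^{N+2}}\leq 2C_1\epsilon$ whenever $C_1$ is chosen large compared to $C_0$ and $\epsilon$ is small. This completes the estimates of Lemma \ref{lem:BootStart}. To derive the bootstrap bounds \eqref{boundsQ}, \eqref{boundsU}, \eqref{boundsC} on $[1,2]$ with constant $2$, I note that $\norm{\psi(t)}_{H^{N+2}}\leq C_1\epsilon\ll 1$ for $t\leq 2$; Lemmas \ref{lem:SobInv} and \ref{lem:SobComp} then imply that $(y,z)\mapsto(Y,Z)$ is a uniformly bi-Lipschitz diffeomorphism and that $\norm{U^i}_{H^s}\approx\norm{u^i}_{H^s}$, $\norm{Q^i}_{H^s}\approx\norm{q^i}_{H^s}$, $\norm{C}_{H^s}\approx\norm{\psi}_{H^s}$ for $s\leq N+2$. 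On $[1,2]$ the Fourier multipliers $m$ and $M$ are pinched between two positive universal constants (immediate from their defining ODEs, since the relevant time intervals have length at most a constant), so every weighted norm reduces to a standard Sobolev norm. The bound on $g=(U_0^1-C)/t$ reduces to those on $U_0^1$ and $C$ since $t\geq 1$, and the $L^1L^2$ bound on $U_0^2$ follows from the $L^\infty L^2$ bound together with $\nu^{-1}\geq 1$.

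The main obstacle I anticipate is bookkeeping: choosing $C_0$, $C_1$ and $\delta$ in the order dictated by the constants-of-the-bootstrap discussion (first $C_0$, then $C_1$, then $\delta$ small relative to both) so that every one of the bootstrap inequalities closes with a factor of $1/4$ to spare, and verifying that the $LU$ amplification, which eventually drives $Q^1_0$ to size $\epsilon/\nu$, contributes only an $\mathcal O(1)$ factor on this fixed short window --- this is what makes the stronger short-time estimate $\norm{q^1}_{H^N}\leq 2C_0\epsilon$ (rather than $\epsilon\jap{t}/\nu$) consistent with the initial-layer regime. Beyond this bookkeeping, the analytical content is entirely standard short-time energy analysis.
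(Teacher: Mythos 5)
Your proposal is correct and follows essentially the same route as the paper: pass to the shear-shifted variables $\bar x = x-ty$ (the paper's $h^i,v^i$), propagate $H^N$/$H^{N+2}$ control to $t=2$ by standard short-time energy estimates, bound $t\psi$ in $H^{N+2}$ from \eqref{def:psi}, and transfer to the $(X,Y,Z)$ unknowns via Lemmas \ref{lem:SobInv} and \ref{lem:SobComp}. Your added observations (that $m$, $M$, $\sqrt{-\dot M M}$ and the factors of $t$ are all $O(1)$ on $[1,2]$, and that the $g$ and $L^1L^2$ bounds reduce to the basic ones, with constants absorbed by choosing $C_0$, $C_1$ after the universal short-time constants) are consistent with what the paper leaves implicit in its sketch.
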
 
\begin{proof} 
As in analogous lemmas in \cite{BM13,BGM15I}, the proof is done by using the linearized coordinate transform. Indeed, define 
\begin{subequations} \label{def:barxqh} 
\begin{align} 
\bar{x} & = x-ty \\ 
h^i(t,\bar{x},y,z) & = q^i(t,\bar{x} + ty,y,z) \\ 
v^i(t,\bar{x},y,z) & = u^i(t,\bar{x} + ty,y,z);
\end{align}
\end{subequations} 
note that $v^i = \Delta_L^{-1}h^i$. These satisfy natural analogues of \eqref{def:MainSys} and \eqref{eq:momentum}. 
 Using standard (inviscid) energy methods, it is easy to propagate $H^N$ regularity on these unknowns to $t = 2$ (or any other fixed, finite time) by choosing $\epsilon$ sufficiently small.   
Next, we need to solve for $(\bar{x},y,z)$ in terms of $(X,Y,Z)$ and then apply Lemma \ref{lem:SobComp}. 
From \eqref{def:psi} it is straightforward via classical energy methods to derive $\norm{t \psi}_{H^{N+2}} \lesssim \epsilon$ for $t \in [0,2]$.
 For $t \in [1/2,2]$ this yields good estimates on $\psi(t,y,z) = Y(t,y,z) - y$ and $X(t,x,y,z) = \bar{x}(t,x,y) - t\psi(t,y,z)$. We then write 
 \begin{align*} 
\bar{x}(t,X,Y,Z) & = X + t\psi(t,y(t,Y,Z),z(t,Y,Z)) \\ 
y(t,Y,Z) & = Y + \psi(t,y(t,Y,Z),z(t,Y,Z)) \\ 
z(t,Y,Z) & = Z,  
\end{align*}
apply Lemma \ref{lem:SobInv} to solve for $(\bar{x},y,z)$ in terms of $X,Y,Z$ and then Lemma \ref{lem:SobComp} and \eqref{def:barxqh} complete the lemma for $\epsilon \nu^{-1}$ sufficiently small.   
\end{proof}

In order to move information back to the original variables, as in \cite{BM13,BMV14,BGM15I}, we first move to the coordinate system $(X,y,z)$. Hence, write $\bar{q}^i(t,X,y,z)  = Q^i(t,X,Y(t,y,z),Z)$ and $\bar{u}^i(t,X,y,z) = U^i(t,X,Y(t,y,z),Z)$ (recall that $Z = z$). 
This lemma also proves that Proposition \ref{propbootstrap} implies Theorem \ref{mainthm}. 

\begin{lemma} \label{lem:intermed} 
For $\epsilon < \delta\nu^{3/2}$ with $\delta$ sufficiently small, the bootstrap hypotheses imply that all the estimates in Propositions \ref{lem:BasicApriori} and \ref{prop:EDID} hold also for $\bar{q}^i$ and $\bar{u}^i$ (with different implicit constants). 

In particular, for $\epsilon \nu^{-3/2}$ sufficiently small, Proposition \ref{propbootstrap} implies Theorem \ref{mainthm}. 
\end{lemma}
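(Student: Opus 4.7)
The plan is a two-step reduction. First, transfer the bootstrap estimates on $U^i, Q^i$ (in the $(X,Y,Z)$-coordinates) to the intermediate quantities $\bar{u}^i(t,X,y,z) = U^i(t,X,Y(t,y,z),Z)$ and $\bar{q}^i$, which differ from the originals only by composition with the near-identity diffeomorphism $(y,z) \mapsto (Y,Z) = (y+\psi(t,y,z),z)$ in the $(y,z)$-variables alone. Second, extract Theorem \ref{mainthm} from Proposition \ref{propbootstrap}: the $U^i_{\neq}$ estimates in the theorem are already stated in the new coordinates, while for the zero-$x$ modes we use that $u_0^i(t,y,z) = \bar{u}_0^i(t,y,z) = U_0^i(t,y+\psi(t,y,z),z)$ (since the $x$-average is invariant under an $x$-shift and $\psi$ is $x$-independent).

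The smallness needed to apply Lemmas \ref{lem:SobInv}--\ref{lem:SobComp} comes from \eqref{boundC}: $\norm{C(t,\cdot)}_{H^{N+2}} \lesssim C_1\epsilon/\nu \lesssim C_1 \delta \nu^{1/2}$, which for $\delta$ small (relative to $C_1$) gives $\norm{\nabla C(t,\cdot)}_{\infty} \ll 1$ via Sobolev embedding (recall $N+2 > 9/2$). Lemma \ref{lem:SobInv} then produces the inverse map $(Y,Z) \mapsto (y^*(t,Y,Z),Z)$ at each time, and Lemma \ref{lem:SobComp} yields $\norm{\bar{u}^i(t,\cdot)}_{H^s} \lesssim \norm{U^i(t,\cdot)}_{H^s}$ for $0 \le s \le N+2$ with constants independent of $\nu$; integrating in time transfers every $L^p_t H^s$ bound of the bootstrap system and of Propositions \ref{lem:BasicApriori}--\ref{prop:EDID} to its $\bar{\cdot}$ analogue. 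For quantities involving $\grad, \grad_L, \Delta_L$, the chain-rule formulas of \S\ref{sec:coordchnge} express the derivatives of $\bar{u}^i$ as a matrix action on the derivatives of $U^i$ with coefficients in $\{1,\psi_y,\psi_z\}$; Lemma \ref{lem:CoefCtrl} bounds this matrix by $I + O(\epsilon/\nu)$ in $H^{N+1}$, so all such estimates transfer modulo universal multiplicative constants. The Fourier multipliers $m, M$ act naturally on either coordinate system and commute with the $(y,z) \to (Y,Z)$ composition modulo an error of size $\lesssim (\epsilon/\nu) \cdot (\text{original quantity})$, which is absorbed.

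Once the transfer is in place, combining Proposition \ref{propbootstrap} (which strictly improves the bootstrap constants) with Lemmas \ref{lem:BootStart2} and \ref{lem:Cont} forces $T = +\infty$ by continuity, yielding global regularity and bounds. Theorem \ref{mainthm}'s $U^i_{\neq}$ estimates now follow with $\sigma = N+2$ from Propositions \ref{lem:BasicApriori}--\ref{prop:EDID} upon dropping the $M$-weight (since $M \gtrsim 1$) and passing from $t\partial_X U^i_{\neq}$ as in \eqref{ineq:tdecay} to $t U^i_{\neq}$ using $|k| \ge 1$ on nonzero-$x$ modes. The $u_0^i$ bounds follow from the identity $u_0^i = \bar{u}_0^i$ together with the composition estimates on $\bar{u}_0^i$ described above, and the $\psi$ bound in the theorem statement is a rereading of \eqref{boundC} via Lemma \ref{lem:CoefCtrl}. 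The main potential obstacle---keeping track of $m$ and $M$ through the non-Fourier-diagonal composition in the $(y,z)$-variables---is overcome by noting that $m \gtrsim \nu^{2/3}$ and $M \gtrsim 1$ are uniformly bounded above and below, the coordinate perturbation is small in $H^{N+2}$ (of size $O(\delta \nu^{1/2}) \ll 1$), and the relevant estimates are stable under $O(\epsilon/\nu)$ relative errors.
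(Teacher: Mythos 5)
Your proposal follows essentially the same route as the paper: invert the map $Y = y + \psi(t,y,z)$ via the Sobolev inverse function theorem (Lemma \ref{lem:SobInv}), using the smallness $\norm{C}_{H^{N+2}} \lesssim \epsilon\nu^{-1} \ll 1$ from \eqref{boundC}, and then transfer all bounds by the composition Lemma \ref{lem:SobComp}, noting that Theorem \ref{mainthm} is stated exactly in the transformed variables \eqref{def:cords}. The extra details you supply (chain rule with $\psi_y,\psi_z$ via Lemma \ref{lem:CoefCtrl}, dropping $M$ and converting $t\partial_X$ to $t$ on nonzero modes, the continuation argument for $T=\infty$) are consistent with, and if anything more explicit than, the paper's own brief proof.
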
 
\begin{proof} 
Notice that $Z(y,z) = z$ and $Y(y,z) - y = \psi(y,z)$, and hence we need estimates on $\psi$, however, from \eqref{boundsC}, we only have estimates on $C$, $\psi_y$ and $\psi_z$ in $(Y,Z)$ coordinates. 
Hence, we need to solve for $y = y(t,Y,z)$. To this end, write $Y - y = C(t,Y,Z) = C(t,y + (Y-y))$ and then apply Lemma \ref{lem:SobInv} to solve for $Y - y(t,Y,z) = \beta(t,Y,z)$. 
Lemma \ref{lem:SobInv} moreover provides the uniform estimate $\norm{y(t,Y,z) - Y}_{H^{N+2}} \lesssim \epsilon \nu^{-1}$. 
With the bootstrap hypotheses, \eqref{def:cords}, and Lemma \ref{lem:SobComp}, this completes the lemma.   
Indeed, by the definition of $X$ in \eqref{def:cords}, Theorem \ref{mainthm} follows immediately. 
\end{proof}

\section{Energy estimates on $Q^2$} \label{EEQ2}
In this section, we prove that, under the assumptions of Proposition~\ref{propbootstrap} (in particular, the bootstrap assumptions~\eqref{boundsU}, \eqref{boundsQ}, \eqref{boundsC}), the inequalities~\eqref{boundQ2} and \eqref{boundQ2low} hold, with $8$ replaced by $4$ on the right-hand side.

\subsection{$H^N$ estimate on $Q^2$} \label{sec:HNQ2}
An energy estimate gives (recall the shorthand \eqref{ABshort})  
\begin{align*}
& \frac{1}{2} \| M m^{1/2} Q^2(T) \|_{H^N}^2 +\nu  \| \nabla_L M m^{1/2} Q^2 \|_{L^2  H^N}^2 + \| \sqrt{-\dot{M} M} m^{1/2} Q^2 \|_{L^2  H^N}^2 \\
& \leq  \frac{1}{2} \| M m^{1/2} Q^2(1) \|_{H^N}^2 + \int_1^T \int A Q^2 A \left[-  (\widetilde{U}_0 \cdot \nabla + U_{\neq} \cdot \nabla_t) Q^2  - Q^j \partial^t_j U^2 \right. \\
& \left. \qquad \qquad \qquad \qquad \qquad \qquad \qquad  - \partial^t_{i} U^j \partial^t_{ij}U^2 + \partial_Y^t (\partial^t_i U^j \partial^t_j U^i) + \nu (\widetilde{\Delta}_t - \Delta_L) Q^2 \right]\,dV\,dt \\
& = \frac{1}{2} \| M m^{1/2} Q^2(1) \|_{H^N}^2 + \mathcal{T} + NLS1 + NLS2 + NLP + DE.
\end{align*}

\subsubsection{Transport nonlinearity} \label{toucan} 
Decompose the transport nonlinearity by frequency:
\begin{align*}
\mathcal{T} & = \int_1^T \int  A Q^2 A\left( \tilde U_0 \cdot \grad Q^2_0  + \tilde U_0 \cdot \grad Q^2_{\neq} \right) dV dt + \int  A Q^2 A\left( U_{\neq} \cdot \grad_t Q^2_0 + U_{\neq} \cdot \nabla_t Q^2_{\neq} \right) dV dt \\ 
& = \mathcal{T}_{00} + \mathcal{T}_{0\neq} + \mathcal{T}_{\neq0} + \mathcal{T}_{\neq \neq}. 
\end{align*}
Further decompose $\mathcal{T}_{00}$ into
\begin{align*} 
\mathcal{T}_{00} = \int_1^T \int \langle D \rangle^N Q_0^2  \langle D \rangle^N \left( g \partial_Y Q^2_0\right) \,dV\,dt + \int \langle D \rangle^N Q_0^2  \langle D \rangle^N \left( U^3_0 \partial_Z Q^2_0\right) \,dV\,dt = \mathcal{T}_{00}^2 + \mathcal{T}_{00}^3.
\end{align*} 
To bound $ \mathcal{T}_{00}^2$, split $g$ into low and high frequencies: 
\begin{align*}
\mathcal{T}_{00}^2 & = \int_1^T \int  \langle D \rangle^N Q_0^2  \langle D \rangle^N \left( P_{\leq 1} g \partial_Y Q^2_0 \right) \,dV\,dt + \int_1^T \int  \langle D \rangle^N Q_0^2  \langle D \rangle^N \left( P_{>1} g \partial_Y Q^2_0 \right) \,dV\,dt \\
& \lesssim \| Q_0^2 \|_{L^\infty H^N} \| g \|_{L^2 L^2} \| \nabla Q^2_0 \|_{L^2 H^N} +  \| Q_0^2 \|_{L^\infty H^N} \| \nabla g \|_{L^2 H^N} \| \nabla Q^2_0 \|_{L^2 H^N} \\
& \lesssim \epsilon^3 \nu^{-1/2-1/2} = \epsilon^3 \nu^{-1}, 
\end{align*}
where the last line followed from the bootstrap hypotheses (note \eqref{boundg} is used to deduce $\norm{g}_{L^2 L^2} \lesssim \epsilon$). 
To bound $ \mathcal{T}_{00}^3$, observe that either the first $Q_0^3$ factor, or the $U_0^3$ factor, must have nonzero $Z$ frequency - or the contribution is zero. Therefore (using also Proposition \ref{lem:BasicApriori}), 
\begin{align*} 
\mathcal{T}_{00}^3 \lesssim \| Q^2_0 \|_{L^\infty H^N} \| \nabla U^3_0 \|_{L^2 H^N} \| \nabla Q^2_0 \|_{L^2 H^N} +  \| U^3_0 \|_{L^\infty H^N} \| \nabla Q^2_0 \|_{L^2 H^N}^2 \lesssim \epsilon^3 \nu^{-1/2-1/2} = \epsilon^3 \nu^{-1}.
\end{align*} 
For the $\mathcal{T}_{0\neq}$ term, we apply the paraproduct decomposition defined above in \S\ref{sec:LPP}:  
\begin{align*}
\mathcal{T}_{0\neq} & = \int_1^T \int  A Q^2_{\neq} A\left( \left(\tilde{U}_0\right)_{Hi} \cdot \left(\grad Q^2_{\neq}\right)_{Lo} \right) dV dt + \int_1^T \int  A Q^2_{\neq} A\left( \left(\tilde{U}_0\right)_{Lo} \cdot \left(\grad Q^2_{\neq}\right)_{Hi} \right) dV dt \\ 
& = \mathcal{T}_{0\neq;HL} + \mathcal{T}_{0\neq;LH}. 
\end{align*} 
Consider the $LH$ term first, which we write out as follows: 
\begin{align*} 
\mathcal{T}_{0\neq;LH} &  = \int_1^T \int  A Q^2 A\left( g_{Lo} (\partial_{Y}-t\partial_X) (Q^2_{\neq})_{Hi} + (U^3_0)_{Lo} (\partial_{Z} Q^2_{\neq})_{Hi} \right) dV dt \\ & \quad + \int_1^T \int  A Q^2 A\left( g_{Lo} t \partial_{X} (Q^2_{\neq})_{Hi} \right) dV dt. 
\end{align*}
By \eqref{LPprod}, \eqref{mlowbound}, and the bootstrap hypotheses,  
\begin{align*} 
\mathcal{T}_{0\neq;LH} & \lesssim \nu^{-1/3}\norm{A Q^2_{\neq}}_{L^2 L^2} \left(\norm{\jap{t} g}_{L^\infty H^{3/2+}} + \norm{U_0^3}_{L^\infty H^{3/2+}}\right)\norm{\grad_L A Q^2}_{L^2 L^2} \lesssim \epsilon^3 \nu^{-1},  
\end{align*}
where note that we applied \eqref{ineq:QiDecay}. 
Similarly, for the $HL$ term we have by \eqref{mlowbound} and Proposition \ref{prop:EDID} (using $N > 5/2$), 
\begin{align*} 
\mathcal{T}_{0\neq;HL} & \lesssim \norm{A Q^2_{\neq}}_{L^2 L^2} \left(\norm{g}_{L^\infty H^N} + \norm{U_0^3}_{L^\infty H^N}\right)\norm{\grad Q^2_{\neq}}_{L^2 H^{3/2+}} \lesssim \epsilon^3\nu^{-2/3}. 
\end{align*}
Consider next $\mathcal{T}_{\neq 0}$. By the product rule and the bootstrap hypotheses, 
\begin{align*}
\mathcal{T}_{\neq 0} & \lesssim \norm{A Q^2_{\neq}}_{L^\infty L^2} \norm{U^{2,3}_{\neq}}_{L^2 H^N}\norm{\grad Q^2_0}_{L^2 H^N} \lesssim \epsilon^3 \nu^{-2/3}.  
\end{align*} 
Consider finally $\mathcal{T}_{\neq\neq}$, by \eqref{mlowbound}, 
\begin{align*}
\mathcal{T}_{\neq\neq} & \lesssim \nu^{-1/3}\norm{A Q^2}_{L^\infty L^2} \norm{U_{\neq}}_{L^2 H^N} \norm{\grad_L A Q^2}_{L^2 L^2} \lesssim \epsilon^3\nu^{-4/3}. 
\end{align*}

\subsubsection{Nonlinear pressure terms} \label{sec:NLPQ2} 
Recall the shorthands defined in \S\ref{sec:short}.
Consider first the $NLP(0,0)$ terms, which are straightforward.
We first apply \eqref{ineq:0freqIBP}, which results in an error term when the derivative lands on the coefficients, and then we apply Lemma \ref{lem:CoefCtrl}: 
\begin{align*}
NLP(i,j,0,0) & \lesssim \norm{\grad Q^2_0}_{L^2 H^N} \| \grad U^{2,3}_0 \|_{L^2 H^N} \| \grad U_0^{2,3} \|_{L^\infty H^{3/2+}} \\ & \quad + \norm{\Delta C}_{L^2 H^N} \norm{Q^2_0}_{L^\infty H^N} \| \grad U^{2,3}_0 \|_{L^2 H^N} \| \grad U_0^{2,3} \|_{L^\infty H^{3/2+}} \\
& \lesssim \epsilon^3 \nu^{-1}.
\end{align*} 

Next turn to the $NLP(0,\neq,i,j)$ terms, which include one of the leading order nonlinear terms, $NLP(1,3,0,\neq)$. 
Consider this problematic term first and expand with a paraproduct as described in \S\ref{sec:LPP}, 
\begin{align*}
NLP(1,3,0,\neq) & = \int_1^T \int A Q^2 A \partial_Y^t\left( \left(\partial^t_Z U_0^1\right)_{Hi} \left(\partial_X U_{\neq}^3\right)_{Lo} \right) dV dt \\ & \quad + \int_1^T \int A Q^2 A \partial_Y^t\left( \left(\partial^t_Z U_0^1\right)_{Lo} \left(\partial_X U_{\neq}^3\right)_{Hi} \right) dV dt \\ 
& =  P_{HL} + P_{LH}. 
\end{align*} 
For the $LH$ term we have, using \eqref{ineq:neqfreqIBPm}, Lemma~\ref{lem:mcomm}, and the inequality $|m^{1/2} \partial_X| \lesssim m \sqrt{-\dot M M} (-\Delta_L)$ which follows from \eqref{ineq:mDelTrick},
\begin{align*}
P_{LH} & \lesssim \| \nabla_L A Q^2 \|_{L^2 L^2} \| \partial^t_Z U^1_0 \|_{L^\infty H^{5/2+}} \| m^{1/2} \partial_X U^3_{\neq} \|_{L^2 H^N} \\
& \lesssim \| \nabla_L A Q^2 \|_{L^2 L^2} \| \partial^t_Z U^1_0 \|_{L^\infty H^{5/2+}} \norm{ m \sqrt{-\dot{M} M} \Delta_L U^3_{\neq} }_{L^2 H^N} \\
& \lesssim \epsilon^3 \nu^{-1/2-1} =  \epsilon^3\nu^{-3/2}, 
\end{align*}
which suffices for $\epsilon \nu^{-3/2} \ll 1$; hence this term uses sharply the smallness requirement. 
For the $HL$ term we can apply \eqref{ineq:neqfreqIBP} and deduce using \eqref{ineq:tdecay}, 
\begin{align*}
P_{HL} & \lesssim \norm{\grad_L m^{1/2} M Q^2}_{L^2 H^N} \norm{\jap{t}^{-1} \grad U_0^1}_{L^\infty H^N} \norm{\jap{t}\partial_X U^3_{\neq}}_{L^2 H^{3/2+}}  \lesssim \nu^{-1}\epsilon^3. 
\end{align*}
This completes $NLP(1,3,0,\neq)$ term; $NLP(1,2,0,\neq)$ is similar. 

Consider next $NLP(i,j,0,\neq)$ with $i,j \neq 1$. 
For these terms we do not need a sophisticated argument; using only \eqref{ineq:mDelTrick} and \eqref{ineq:neqfreqIBP}, 
\begin{align*}
NLP(i,j,0,\neq)\mathbf{1}_{i,j,\neq 1}  & \lesssim \norm{\grad_L A Q^2}_{L^2 L^2} \norm{\grad U_0^j}_{L^\infty H^N} \norm{\grad_L U^i_{\neq}}_{L^2 H^N} \mathbf{1}_{i,j,\neq 1} \\ 
& \lesssim \norm{\grad_L A Q^2}_{L^2 L^2} \norm{\grad U_0^j}_{L^\infty H^N} \norm{\grad_L \Delta_L m U^i_{\neq}}_{L^2 H^N} \mathbf{1}_{i,j,\neq 1} \\ 
& \lesssim  \epsilon^3\nu^{-1/2-1/2} = \epsilon^3\nu^{-1}. 
\end{align*}

Turn next to $NLP(i,j,\neq,\neq)$. 
We expand with a paraproduct and by symmetry, we only have to consider the case when $i$ is in ``high frequency''. 
By \eqref{ineq:0freqIBP} (note that the leading factor could have zero $X$ frequency), 
\begin{align*}
NLP(i,j,\neq,\neq) & = \int_1^T \int A Q^2 A \partial_Y^t\left( \left(\partial^t_j U_{\neq}^i\right)_{Hi} \left(\partial^t_i U_{\neq}^j\right)_{Lo} \right) dV dt \\ 
& \lesssim \left(\norm{\grad_L A Q^2}_{L^2 L^2} +  \norm{A Q^2}_{L^\infty L^2}\norm{\grad C}_{L^2 H^{N+1}}\right) 
 \norm{\grad_L U^i_{\neq}}_{L^\infty H^N} \norm{ \partial^t_i U_{\neq}^j}_{L^2 H^{3/2+}} \\ 
& \lesssim \nu^{-1/3}\left(\norm{\grad_L A Q^2}_{L^2 L^2} +  \norm{A Q^2}_{L^\infty L^2}\norm{\grad C}_{L^2 H^{N+1}}\right) \norm{\Delta_L m U^i_{\neq}}_{L^\infty H^N} \norm{ \partial^t_i U_{\neq}^j}_{L^2 H^{N-1}}
\end{align*}
At this point, we distinguish two cases: if on the one hand $i =1$, by Proposition \ref{prop:EDID}, 
\begin{align*}
NLP(1,j,\neq,\neq) &  \lesssim  \nu^{-1/3} \left(\norm{\grad_L A Q^2}_{L^2 L^2} +  \norm{A Q^2}_{L^\infty L^2}\norm{\grad C}_{L^2 H^{N+1}}\right) \| \Delta_L m U^1_{\neq} \|_{L^\infty H^N} \| \partial_X U^j \|_{L^2 H^{N-1}} \\
&  \lesssim  \epsilon^3\nu^{-1/3-1/2-1/3-1/6} = \epsilon^3 \nu^{-4/3},
\end{align*}
while on the other hand if $i \neq 1$, by Proposition \ref{prop:EDID},
\begin{align*}
NLP(i,j,\neq,\neq) & \lesssim \nu^{-1/3} \left(\norm{\grad_L A Q^2}_{L^2 L^2} +  \norm{A Q^2}_{L^\infty L^2}\norm{\grad C}_{L^2 H^{N+1}}\right)\| \Delta_L m U^{2,3}_{\neq} \|_{L^\infty H^N} \| \nabla_L U^j \|_{L^2 H^{N-1}}\\
&  \lesssim  \epsilon^3\nu^{-1/3-1/2-1/2} = \epsilon^3 \nu^{-4/3}.
\end{align*}
This completes the nonlinear pressure terms. 

\subsubsection{Nonlinear stretching terms} \label{penguin} 
Consider $NLS1$, starting with $NLS1(0,0,j)$ (note that $j\neq 1$) and using \eqref{ineq:gradtL},  
\begin{align*}
NLS1(0,0,j) & \lesssim \norm{Q^2_0}_{L^\infty H^N} \norm{\Delta_t U^j_0}_{L^2 H^N} \norm{\grad U^2_0}_{L^2 H^N} \\ 
& \lesssim \norm{Q^2_0}_{L^\infty H^N} \norm{\grad U^j_0}_{L^2 H^{N+1}} \norm{\grad U^2_0}_{L^2 H^N} \\ 
& \lesssim \epsilon^3 \nu^{-1}. 
\end{align*}
By Propositions \ref{lem:BasicApriori} and \ref{prop:EDID} (note that $j = 1$ is permitted),   
\begin{align*}
NLS1(0,\neq,j) & \lesssim \norm{A Q^2_{\neq}}_{L^2 L^2} \norm{Q^j_{0}}_{L^\infty H^N} \norm{\grad_L U^2_{\neq}}_{L^2 H^N} \lesssim \epsilon^3 \nu^{-4/3},  
\end{align*}
and, using \eqref{mlowbound} and Proposition \ref{prop:EDID} (note here $j \neq 1$),  
\begin{align*}
NLS1(\neq,0,j) & \lesssim \norm{A Q^2_{\neq}}_{L^2 L^2} \norm{Q^j_{\neq}}_{L^2 H^N} \norm{\grad U^2_{0}}_{L^\infty H^N} \lesssim \epsilon^3 \nu^{-1}.   
\end{align*}
Similarly (note $j = 1$ is permitted),
\begin{align*}
NLS1(\neq,\neq,j) & \lesssim \norm{A Q^2}_{L^\infty L^2} \norm{Q^j_{\neq}}_{L^2 H^N} \norm{\grad_L U^2_{\neq}}_{L^2 H^N} \lesssim \epsilon^3 \nu^{-4/3}.   
\end{align*}

Recall the second stretching term, $NLS2$, is written $\partial_i^t U^j \partial^t_{ij}U^2$. 
The contributions from the $NLS2(0,0)$ terms are treated in the same manner as $NLS1(0,0)$ above and are hence omitted for brevity. 
Turning to the non-zero frequencies, we have by \eqref{ineq:mDelTrick}, \eqref{mlowbound}, and Proposition \ref{lem:BasicApriori}  
\begin{align*}
NLS2(0,\neq,j) & \lesssim \norm{A Q^2_{\neq}}_{L^2 L^2} \norm{\partial_i^t U_0^j}_{L^\infty H^N} \norm{\partial_{ij}^t U^2_{\neq}}_{L^2 H^N}\mathbf{1}_{j \neq 1} \\ & \quad + \norm{A Q^2_{\neq}}_{L^2 L^2} \norm{\partial_i^t U_0^1}_{L^\infty H^N} \norm{\partial_X \partial_{i}^t U^2_{\neq}}_{L^2 H^N} \\   
& \lesssim \nu^{-1/3}\norm{A Q^2_{\neq}}_{L^2 L^2} \norm{\grad U_0^j}_{L^\infty H^N} \norm{m^{1/2} \Delta_L U^2_{\neq}}_{L^2 H^N}\mathbf{1}_{j \neq 1}  \\ & \quad + \norm{A Q^2_{\neq}}_{L^2 L^2} \norm{\grad U_0^1}_{L^\infty H^N} \norm{m^{1/2}\Delta_L U^2_{\neq}}_{L^2 H^N} \\   
& \lesssim \epsilon^3 \nu^{-1/3-1/6-1/6} + \epsilon^3 \nu^{-1/6-1-1/6} \lesssim \epsilon^3 \nu^{-4/3}.   
\end{align*}
Similarly, we have (note in this case $j \neq 1$), 
\begin{align*}
NLS2(\neq,0,j) & \lesssim \norm{A Q^2_{\neq}}_{L^2 L^2} \norm{\partial_i^t U_{\neq}^j}_{L^2 H^N} \norm{\grad U^2_{0}}_{L^\infty H^{N+1}} \lesssim \epsilon^3 \nu^{-1/6-1/2} = \epsilon^3 \nu^{-2/3},  
\end{align*}
and
\begin{align*}
NLS2(\neq,\neq,j) & \lesssim \norm{A Q^2}_{L^\infty L^2} \norm{\partial_i^t U_{\neq}^j}_{L^2 H^N} \norm{\Delta_L U^2_{\neq}}_{L^2 H^N} \lesssim \epsilon^3 \nu^{-1/2-1/3-1/3-1/6} = \epsilon^3\nu^{-4/3}.
\end{align*}

\subsubsection{Dissipation error terms} \label{sec:DEQ2}
These terms are given by (recall the short-hand \eqref{def:G}): 
\begin{align*}
DE & = \nu\int A Q^2 A \left(G \partial_{YY}^L Q^2 + 2\psi_z \partial_{YZ}^L Q^2\right) dV = \mathcal{E}_1 + \mathcal{E}_2. 
\end{align*} 
Both $\mathcal{E}_1$ and $\mathcal{E}_2$ are treated similarly, hence only  consider $\mathcal{E}_1$. 
By \eqref{ineq:0freqIBP} and \eqref{mlowbound}, we have 
\begin{align*}
\mathcal{E}_1 & \lesssim \nu^{2/3} \norm{G}_{L^\infty H^{N}}\norm{\grad_L A Q^2}_{L^2 L^2}^2 + \nu^{2/3} \norm{\grad G}_{L^2 H^{N}} \norm{A Q^2}_{L^\infty L^2} \norm{\grad_L A Q^2}_{L^2L^2} \\ 
& \lesssim \epsilon^3\nu^{2/3-1-1/2-1/2} = \epsilon^3 \nu^{-4/3},   
\end{align*}
which suffices.

\subsection{$H^{N-1}$ estimate on $Q^2$}
Recall that a crucial strategy of the current approach is to confirm that the extra $m^{1/2}$ on $Q^2$ can be removed in $H^{N-1}$.
As in \S\ref{sec:HNQ2}, an energy estimate gives 
\begin{align*}
& \frac{1}{2} \| M Q^2(T) \|_{H^{N-1}}^2 +\nu  \| \nabla_L M Q^2 \|_{L^2  H^{N-1}}^2 + \| \sqrt{-\dot{M} M} Q^2 \|_{L^2  H^{N-1}}^2 \\
& \leq  \frac{1}{2} \| M Q^2(1) \|_{H^{N-1}}^2 + \int_1^T \int \langle D \rangle^{N-1} M Q^2 \langle D \rangle^{N-1} M\left[-  (\widetilde{U}_0 \cdot \nabla + U_{\neq} \cdot \nabla_t) Q^2  - Q^j \partial^t_j U^2 \right. \\
& \left. \qquad \qquad \qquad \qquad \qquad \qquad \qquad  - \partial^t_{i} U^j \partial^t_{ij}U^2 + \partial_Y^t (\partial^t_i U^j \partial^t_j U^i) + \nu (\widetilde{\Delta}_t - \Delta_L) Q^2 \right]\,dV\,dt \\
& = \frac{1}{2} \| M Q^2(1) \|_{H^{N-1}}^2 + \mathcal{T} + NLS1 + NLS2 + NLP + DE. 
\end{align*}

Nearly ever step in this estimate is similar to those done in \S\ref{sec:HNQ2}, indeed, the presence of $m^{1/2}$ in \S\ref{sec:HNQ2} is used only to control the $NLP(1,3,0,\neq)$ term in \S\ref{sec:NLPQ2}.  
The $\mathcal{T}$ is bounded as in \S\ref{toucan} and is hence omitted (however, notice that this requires the $H^N$ estimate \eqref{boundQ2} here; this detail is due to our only assuming ${N-1} >3/2$, where normally $H^{5/2+}$ is natural for closing energy estimates on a system such as \eqref{def:MainSys}). 
Similarly, the dissipation error terms $DE$ are controlled as in \S\ref{sec:DEQ2}.

The $NLP(0,0)$ terms are treated as in \S\ref{sec:NLPQ2}
Now let us see how  the reduction of one derivative allows to eliminate the use of $m^{1/2}$ in the treatment of $NLP(1,3,0,\neq)$. By \eqref{ineq:neqfreqIBP}, \eqref{ineq:tdecay}, and  $N-1 > 3/2$, 
\begin{align*}
NLP(1,3,0,\neq)  & \lesssim \norm{\grad_L M Q^2}_{L^2 H^{N-1}} \norm{\jap{t}^{-1} \grad U_0^1}_{L^\infty H^{N-1}} \norm{\jap{t} \partial_X U^3_{\neq}}_{L^2 H^{N-1}} & \lesssim\epsilon^3 \nu^{-1/2-1/2} = \epsilon^3\nu^{-1}. 
\end{align*}
This suffices for the $NLP(1,3,0,\neq)$ term; the $NLP(1,2,0,\neq)$ is similar. 
The other $NLP$ terms can be treated as in \S\ref{sec:NLPQ2} and the $NLS1$ and $NLS2$ terms can be treated as in \S\ref{penguin} above, and hence these contributions are also omitted.
This completes the $H^{N-1}$ estimate \eqref{boundQ2low}. 

\section{Energy estimate on $Q^3$} 

\label{EEQ3}

In this section, we prove that, under the assumptions of Proposition~\ref{propbootstrap} (in particular, the bootstrap assumptions~\eqref{boundsU}, \eqref{boundsQ}, \eqref{boundsC}), the inequality~\eqref{boundQ3} holds, with $8$ replaced by $4$ on the right-hand side.

An energy estimate gives (recall the shorthand \eqref{ABshort}) 
\begin{align}
& \frac{1}{2} \| B Q^3(T) \|_{L^2}^2 + \nu \|  \nabla_L B Q^3 \|_{L^2  L^2}^2 + \| m \sqrt{-\dot{M} M} Q^3 \|_{L^2  H^N}^2 + \| M \sqrt{-\dot{m} m} Q^3 \|_{L^2  H^N}^2 \nonumber \\
& = \frac{1}{2} \| B Q^3(1) \|_{L^2}^2 + \int_1^T \int B Q^3 B \left[- 2\partial^t_{XY} U^3 + 2\partial^t_{XZ} U^2     \right.\nonumber \\
& \qquad  \qquad  \qquad \qquad \qquad -  (\widetilde{U}_0 \cdot \nabla + U_{\neq} \cdot \nabla_t) Q^3 -  Q^j \partial^t_j U^3   - 2\partial^t_{i} U^j \partial^t_{ij}U^3   \nonumber \\
& \qquad \qquad \qquad \qquad  \qquad  \qquad \left. + \partial^t_Z \left(\partial^t_i U^j \partial^t_j U^i\right) + \nu (\widetilde{\Delta}_t - \Delta_L) Q^3 \right] \,dV\,dt \nonumber \\
& =  \frac{1}{2} \| B Q^3(1) \|_{L^2}^2 + LS + LP + \mathcal{T} +  NLS1 +  NLS2 + NLP + DE. \label{eq:Q3Energy}
\end{align}

\subsection{The linear stretching term $LS$}
The linear stretching term can be split into (recall the shorthands \eqref{def:G} and \eqref{ABshort}), 
\begin{align*}
LS & = \int_1^T \int B Q^3 B \left[- 2\partial^L_{XY} \Delta_L^{-1}[ Q^3 - G \partial_{YY}^L U^3 - 2\psi_z \partial_{YZ}^L U^3 - \Delta_t C \partial_Y^L U^3] \right] \,dV dt \\
& \quad + \int_1^T \int B Q^3 B \left(\psi_y \partial_{XY}^t U^3 \right) \, dV dt \\ 
& = LS_1 + LS_2 + LS_3 + LS_4 + LS_5.
\end{align*}
The leading order term, $LS_1$, is absorbed by the left-hand side of \eqref{eq:Q3Energy}: indeed, by construction of $m$ in \S\ref{sec:FM} we have 
$$
LS_1 \leq \| M \sqrt{-\dot{m} m} Q^3 \|_{L^2  H^N}^2 + \frac{\nu}{2} \| \nabla_L B Q^3 \|_{L^2  L^2}^2.
$$
We turn next to the error terms. 
For $LS_2$ we apply Lemma \ref{lem:mcomm} and use that $\abs{k} \lesssim \abs{k,\eta-kt,l}\sqrt{-\dot{M}^0 M^0}$ to deduce: 
\begin{align*}
LS_2 & = -2 \int_1^T \int B \partial^L_{XY}\Delta_L^{-1} Q^3 B\left( G \partial_{YY}^L U^3\right) \,dV\,dt \\
& \lesssim \norm{\sqrt{- \dot{M^0} M^0} m Q^3}_{L^2 H^N} \norm{G}_{L^\infty H^{N+1}} \norm{m^{1/2}\Delta_L U^3_{\neq}}_{L^2 H^N} \\ 
& \lesssim \epsilon^3 \nu^{-1-1/3-1/6} = \epsilon^3 \nu^{-3/2}, 
\end{align*} 
which suffices for $\epsilon \nu^{-3/2} \ll 1$; notice that this is a sharp use of the smallness conditions. 
The term $LS_3$ can be estimated in the same way as $LS_2$. 
The $LS_4$ term is estimated with a slight variation, using \ref{ineq:mDelTrick} and $\abs{k} \lesssim \abs{k,\eta-kt,l}\sqrt{-\dot{M}^0 M^0}$: 
\begin{align*}
LS_4 \lesssim \norm{\sqrt{-\dot{M}^0M^0} m Q^3_{\neq}}_{L^2 H^N} \norm{C}_{L^\infty H^{N+2}} \norm{\grad_L U^3_{\neq}}_{L^2 H^N} \lesssim \epsilon^3\nu^{-3/2}. 
\end{align*}
Turn to $LS_5$. Here we use Lemma \ref{lem:mcomm}, $\abs{k} \lesssim \abs{k,\eta-kt,l}\sqrt{-\dot{M}^0 M^0}$, and \eqref{mlowbound}, to deduce 
\begin{align*}
LS_5 & \lesssim \norm{B Q^3_{\neq}}_{L^2 L^2} \norm{\grad C}_{L^\infty H^{N+1}} \norm{\sqrt{-\dot{M}^0 M^0} m^{1/2} \Delta_L U^3_{\neq}}_{L^2 H^N} \lesssim \epsilon^3 \nu^{-1/6-1-1/3} = \epsilon^3 \nu^{-3/2}, 
\end{align*}
which suffices for $\epsilon \nu^{-3/2} \ll 1$. 

\subsection{The linear pressure term $LP$}
The linear pressure term is split into two contributions: 
\begin{align*} 
LP & = \int_1^T \int B Q^3 B \partial_{XZ}^LU^2_{\neq} dV dt + \int_1^T \int B Q^3 B \left(\psi_z \partial_{XY}^LU^2_{\neq}\right) dV dt \\ 
& = LP_1 + LP_2. 
\end{align*}
By definition of $M^1$, 
\begin{align*}
LP_1 & \lesssim \norm{\sqrt{-\dot{M^1}M^1}mQ^3}_{L^2 H^N}\norm{\sqrt{-\dot{M^1}M^1} m^{1/2} \Delta_L U^2_{\neq}}_{L^2 H^N} \lesssim  C_0^{-1} \left(C_0 \epsilon\right)^2,  
\end{align*}
which suffices by choosing $C_0$ sufficiently large. 
For $LP_1$, we have, similar to $LS_5$ above, by the definition of $M$ and Lemma \ref{lem:dotMPEL} along with Lemma \ref{lem:mcomm}, 
\begin{align*}
LP_2 & \lesssim \norm{\grad C}_{L^\infty H^{N+1}} \norm{M mQ^3_{\neq}}_{L^2 H^N}\norm{\sqrt{-\dot{M}M} m^{1/2} \Delta_L U^2_{\neq}}_{L^2 H^N} \lesssim \epsilon^3 \nu^{-7/6},  
\end{align*}
which is sufficient. 

\subsection{Transport nonlinearity} \label{puffin}
The interaction of non-zero frequencies will require more precision here than in \S\ref{toucan}. 
As in \S\ref{toucan}, we subdivide based on frequency: 
\begin{align*}
\mathcal{T} & = -\int_1^T \int  B Q^3 B\left( \tilde U_0 \cdot \grad Q^3_0  + \tilde U_0 \cdot \grad Q^3_{\neq} \right) dV dt - \int_1^T \int  B Q^3 B\left( U_{\neq} \cdot \grad_t Q^3_0 + U_{\neq} \cdot \nabla_t Q^3_{\neq} \right) dV dt \\ 
& = \mathcal{T}_{00} + \mathcal{T}_{0\neq} + \mathcal{T}_{\neq0} + \mathcal{T}_{\neq \neq}. 
\end{align*}
The $\mathcal{T}_{00}$ term is treated as in \S\ref{toucan} and is hence omitted for brevity. The $\mathcal{T}_{0\neq}$ term is treated analogously to the corresponding term in \S\ref{toucan} via a paraproduct decomposition, yielding: 
\begin{align*}
\mathcal{T}_{0\neq} & \lesssim \norm{B Q^3_{\neq}}_{L^2 L^2} \left(\norm{g}_{L^\infty H^N} + \norm{U_0^3}_{L^\infty H^N}\right)\norm{\grad Q^3_{\neq}}_{L^2 H^{3/2+}}  \\ & \quad + \norm{B Q^3_{\neq}}_{L^2 L^2}\left(\norm{\jap{t} g}_{L^\infty H^{3/2+}} +  \norm{U_0^3}_{L^\infty H^{3/2+}}\right)\norm{\grad_L M Q^3_{\neq}}_{L^2 H^{N}} \\
& \lesssim \nu^{-2/3}\norm{B Q^3_{\neq}}^2_{L^2 L^2} \left(\norm{g}_{L^\infty H^N} + \norm{U_0^3}_{L^\infty H^N}\right) \\ & \quad + \nu^{-2/3}\norm{B Q^3_{\neq}}_{L^2 L^2}\left(\norm{\jap{t} g}_{L^\infty H^{3/2+}} +  \norm{U_0^3}_{L^\infty H^{3/2+}}\right)\norm{\grad_L B Q^3_{\neq}}_{L^2 L^2} \\
& \lesssim \epsilon^3 \nu^{-1} + \epsilon^3 \nu^{-4/3}. 
\end{align*} 
Consider next $\mathcal{T}_{\neq 0}$, which is straightforward:  
\begin{align*}
\mathcal{T}_{\neq 0} & \lesssim \norm{B Q^3_{\neq}}_{L^\infty L^2} \norm{U^{2,3}_{\neq}}_{L^2 H^N} \norm{\grad Q^3_0}_{L^2 H^N} \lesssim \epsilon^3 \nu^{-2/3}.
\end{align*} 
Finally consider next $\mathcal{T}_{\neq \neq}$. First, divide up based on the presence of $U^1_{\neq}$:  
\begin{align*}
\mathcal{T}_{\neq \neq} & = \int B Q^3 B \left( U_{\neq}^j \partial_j^t Q^3_{\neq} \right) dV dt = \mathcal{T}_{\neq\neq}^1 + \mathcal{T}_{\neq\neq}^2 + \mathcal{T}_{\neq\neq}^3.
\end{align*}
The latter two terms can be treated in a straightforward manner using \eqref{mlowbound} and Propositions \ref{lem:BasicApriori} and \ref{prop:EDID}, 
\begin{align*}
\mathcal{T}_{\neq\neq}^{2,3} & \lesssim \nu^{-2/3}\norm{B Q^3}_{L^\infty L^2} \norm{U^{2,3}_{\neq}}_{L^2 H^N} \norm{\grad_L B Q^3}_{L^2 L^2} \lesssim \epsilon^3 \nu^{-4/3}. 
\end{align*}
Next, decompose $\mathcal{T}^1_{\neq \neq}$ via a paraproduct 
\begin{align*}
\mathcal{T}^1_{\neq \neq} & = \int  B Q^3 B \left( (U_{\neq}^1)_{Hi} (\partial_X Q^3_{\neq})_{Lo} \right) dV dt + \int  B Q^3 B \left( (U_{\neq}^1)_{Lo} (\partial_X Q^3_{\neq})_{Hi} \right) dV dt  \\ 
& = \mathcal{T}^1_{\neq \neq;HL} + \mathcal{T}^1_{\neq \neq;LH}. 
\end{align*}
For the $HL$ term we have the following by \eqref{mlowbound} and Propositions \ref{lem:BasicApriori} and \ref{prop:EDID}, 
\begin{align*}
\mathcal{T}_{\neq\neq;HL}^1 & \lesssim \norm{B Q^3}_{L^\infty L^2} \norm{U^1_{\neq}}_{L^2 H^N} \norm{Q^3_{\neq}}_{L^2 H^{5/2+}} \\
&  \lesssim \nu^{-2/3}\norm{B Q^3}_{L^\infty L^2} \norm{U^1_{\neq}}_{L^2 H^N} \norm{B Q^3_{\neq}}_{L^2 L^2} \lesssim \epsilon^3 \nu^{-4/3}. 
\end{align*}
For the $LH$ term we use the better estimate on $\norm{U^1_{\neq}}_{H^{N-1}}$ from Proposition \ref{prop:EDID} (and \eqref{mlowbound}): 
\begin{align*}
\mathcal{T}_{\neq\neq;LH}^1 & \lesssim \nu^{-2/3}\norm{B Q^3}_{L^\infty L^2}\norm{U^1_{\neq}}_{L^2H^{3/2+}}\norm{\partial_X B Q^3}_{L^2 L^2} \lesssim \epsilon^3 \nu^{-4/3}. 
\end{align*}
This completes the transport nonlinearity. 

\subsection{Nonlinear pressure and stretching terms}
\subsubsection{The stretching terms} 
Recall the shorthands defined in \S\ref{sec:short}. 
The $NLSi(0,0)$ terms are treated as in \S\ref{penguin} and are hence here omitted. 

For $NLS1$ we get from Proposition \ref{prop:EDID} (note $j \neq 1$), 
\begin{align*}
NLS1(\neq,0,j) 
& \lesssim \norm{B Q_{\neq}^3}_{L^2 L^2} \norm{Q^j_{\neq}}_{L^2 H^N} \norm{\grad U^3_0}_{L^\infty H^N}  \lesssim \epsilon^3 \nu^{-1/6 - 2/3 - 1/6} = \epsilon^3 \nu^{-1}. 
\end{align*} 
Similarly, by Propositions \ref{prop:EDID} and \ref{lem:BasicApriori},   
\begin{align*}
NLS1(0,\neq,j) 
& \lesssim \norm{B Q_{\neq}^3}_{L^2 L^2} \norm{Q^j_{0}}_{L^\infty H^N} \norm{\partial_j^t U^3_{\neq}}_{L^2 H^N} \\ 
& \lesssim \epsilon^3 \left( \nu^{-1/6-1-1/6}\mathbf{1}_{j = 1}  + \nu^{-1/6-1/2}\mathbf{1}_{j \neq 1} \right) \\ 
&  \lesssim \epsilon^3 \nu^{-4/3}.
\end{align*}
For the interaction of non-zero frequencies we use the slight variant, 
\begin{align*}
NLS1(\neq,\neq,j) & \lesssim \norm{B Q^3}_{L^\infty L^2} \norm{Q^j_{\neq}}_{L^2 H^N} \norm{\partial_j^t U^3_{\neq}}_{L^2 H^N} \\ 
  & \lesssim \epsilon^3 \left( \nu^{-2/3-1/6-1/2}\mathbf{1}_{j \neq 1}  + \nu^{-1-1/6-1/6}\mathbf{1}_{j = 1} \right) \\ 
& \lesssim \epsilon^{3} \nu^{-4/3}, 
\end{align*}
which is sufficient for $\epsilon \nu^{-4/3} \ll 1$. 

Turn next to $NLS2$; first by Proposition \ref{prop:EDID} (since $j \neq 1$),
\begin{align*}
NLS2(\neq,0,j) & \lesssim \norm{B Q^3_{\neq}}_{L^2 L^2} \norm{\grad_L U^j_{\neq}}_{L^2 H^N} \norm{\grad U^3_0}_{L^\infty H^{N+1}} \lesssim \epsilon^3 \nu^{-1/6-1/2} = \epsilon^3 \nu^{-2/3}, 
\end{align*}
which suffices. 
Consider next $NLS2(0,\neq,1)$, which requires a slightly more precise treatment. 
Via a paraproduct decomposition, \eqref{ineq:mdjtf}, \eqref{ineq:mDelTrick}, and $t\abs{k} \lesssim \jap{\eta} \jap{\eta-kt}$, we have 
\begin{align*}
NLS2(0,\neq,1) & \lesssim \norm{B Q^3_{\neq}}_{L^2 L^2} \norm{\grad U^1_{0}}_{L^\infty H^{5/2+}} \norm{m^{1/2} \partial_i^t \partial_X U^3_{\neq}}_{L^2 H^N} \\ 
& \quad + \norm{B Q^3_{\neq}}_{L^2 L^2} \norm{\jap{t}^{-1} \partial_i^t U^1_{0}}_{L^\infty H^{N+1}} \norm{m^{1/2} \jap{t} \partial_i^t \partial_X U^3_{\neq}}_{L^2 H^{3/2+}} \\ 
& \lesssim \norm{B Q^3_{\neq}}_{L^2 L^2} \norm{U^1_{0}}_{L^\infty H^{N+2}} \norm{m \Delta_L U^3_{\neq}}_{L^2 H^N} \\ 
& \quad + \norm{B Q^3_{\neq}}_{L^2 L^2} \norm{\jap{t}^{-1} U^1_{0}}_{L^\infty H^{N+2}} \norm{m \grad_L \Delta_L U^3_{\neq}}_{L^2 H^{5/2+}} \\ 
& \lesssim \epsilon^3 \nu^{-1/6-1-1/6} + \epsilon^3 \nu^{-1/6-1/2} = \epsilon^3 \nu^{-4/3}, 
\end{align*}
which is sufficient (note we also used Lemma \ref{lem:CoefCtrl}). 
For $j\neq 1$ contributions, we have 
\begin{align*}
NLS2(0,\neq,j \neq 1) & \lesssim \norm{B Q^3_{\neq}}_{L^2 L^2} \norm{\grad U^j_{0}}_{L^\infty H^{N+1}} \norm{m^{1/2} \Delta_L U^3_{\neq}}_{L^2 H^N} \lesssim \epsilon^3 \nu^{-2/3}.
\end{align*}
Turn finally to the interaction of non-zero frequencies; using \eqref{ineq:mDelTrick}, \eqref{mlowbound}, and Proposition \ref{lem:BasicApriori},  
\begin{align*}
NLS2(\neq,\neq,j) & \lesssim \norm{B Q^3}_{L^\infty L^2} \norm{\grad_L U^{2,3}_{\neq}}_{L^2 H^{N}} \norm{\Delta_L U^3_{\neq}}_{L^2 H^N} \\
& \qquad + \norm{B Q^3}_{L^\infty L^2} \norm{\grad_L U^1_{\neq}}_{L^2 H^{N}} \norm{\partial_X \grad_L U^3_{\neq}}_{L^2 H^N} \\ 
& \lesssim \nu^{-1}\norm{B Q^3}_{L^\infty L^2} \norm{m \Delta_L U^{2,3}_{\neq}}_{L^2 H^{N}} \norm{m\Delta_L U^3_{\neq}}_{L^2 H^N} \\
& \qquad + \nu^{-2/3}\norm{B Q^3}_{L^\infty L^2} \norm{m \Delta_L U^1_{\neq}}_{L^2 H^{N}} \norm{m \Delta_L U^3_{\neq}}_{L^2 H^N} \\
& \lesssim \epsilon^3 \nu^{-1-1/6-1/6} = \epsilon^3 \nu^{-4/3},   
\end{align*}
which completes the treatment of the stretching terms. 

\subsubsection{The pressure term $NLP$}
the treatment of the $NLP(i,j,0,0)$ term is the same as that in \S\ref{sec:NLPQ2} and is hence omitted here. For the leading order term involving $i = 1$, we begin by subdividing via a paraproduct decomposition: 
\begin{align*}
NLP(1,j,0,\neq) & =  \int_1^T \int  B  Q^3  B\partial_Z^t \left( (\partial_j^t U^1_0)_{Hi} (\partial_X U^j_{\neq} )_{Lo} \right) \,dV\,dt \\ & \quad + \int_1^T \int  B  Q^3  B\partial_Z^t \left( (\partial_j^t U^1_0)_{Lo} (\partial_X U^j_{\neq} )_{Hi} \right) \,dV\,dt \\
& = P_{HL} + P_{LH}. 
\end{align*} 
By \eqref{ineq:neqfreqIBPm} and \eqref{ineq:tdecay}, we have, since $j \neq 1$,
\begin{align*}
P_{HL} & \lesssim \norm{\grad_L B Q^3_{\neq}}_{L^2 L^2}\norm{\jap{t}^{-1} \partial_j^t U^1_0}_{L^\infty H^{N}}\norm{\jap{t}\partial_X U^j_{\neq}}_{L^2 H^{3/2+}} \lesssim \epsilon^{3}\nu^{-1}. 
\end{align*}
By \eqref{ineq:neqfreqIBPm}, \eqref{mlowbound}, Lemma \ref{lem:CoefCtrl}, and the inequality $1 \lesssim (\abs{k} + \abs{l} + \abs{\eta-kt}) \sqrt{-\dot{M^0} M^0}$ to deduce 
\begin{align*}
P_{LH} & \lesssim \norm{\grad_L B Q^3_{\neq}}_{L^2 L^2}\norm{\partial_j^t U^1_0}_{L^\infty H^{5/2+}}\norm{\sqrt{-\dot{M} M} m^{1/2} \partial_X \grad_L U^j_{\neq}}_{L^2 H^N} \\ 
& \lesssim \norm{\grad_L B Q^3_{\neq}}_{L^2 L^2}\norm{U^1_0}_{L^\infty H^{N+2}}\norm{\sqrt{-\dot{M} M} m \Delta_L U^j_{\neq}}_{L^2 H^N}  \\ 
& \lesssim \epsilon^3 \nu^{-3/2},  
\end{align*}
which completes the $NLP(1,j,0,\neq)$ terms for $\epsilon \nu^{-3/2} \ll 1$. 
For the $NLP(i\neq 1,j,0,\neq)$ terms a much simpler argument is possible, indeed, by \eqref{ineq:neqfreqIBP} and Proposition \ref{prop:EDID}, 
\begin{align*}
NLP(i\neq1,j,0,\neq) & \lesssim \norm{\grad_L B Q^3_{\neq}}_{L^2 L^2} \norm{\partial_j^t U^i_0}_{L^\infty H^{N}} \norm{\grad_L U^j_{\neq}}_{L^2 H^N} \lesssim \epsilon^{3}\nu^{-1}.  
\end{align*}
Turn next to the $NLP(1,j,\neq,\neq)$ terms. 
By the paraproduct decomposition we deduce, 
\begin{align*}
NLP(1,j,\neq,\neq) & \lesssim \norm{B Q^3}_{L^\infty L^2} \norm{\Delta_L U^1_{\neq}}_{L^2 H^{N}} \norm{\partial_X U^j_{\neq}}_{L^2 H^{3/2+}} \\ & \quad + \norm{B Q^3}_{L^\infty L^2} \norm{\grad_L U^1_{\neq}}_{L^2 H^{N}} \norm{\partial_X \grad_L U^j_{\neq}}_{L^2 H^{3/2+}}  \\ 
& \quad + \norm{B Q^3}_{L^\infty L^2} \norm{\grad_L U^1_{\neq}}_{L^2 H^{3/2+}} \norm{\partial_{X} \grad_L U^j_{\neq}}_{L^2 H^{N}} \\ & \quad + \norm{B Q^3}_{L^\infty L^2} \norm{\Delta_L U^1_{\neq}}_{L^2 H^{3/2+}} \norm{\partial_{X} U^j_{\neq}}_{L^2 H^{N}}.
\end{align*}
Then, by \eqref{ineq:mDelTrick}, \eqref{mlowbound}, and Lemma \ref{lem:BasicApriori}, we have  
\begin{align*}
\norm{\Delta_L U^1_{\neq}}_{L^2 H^N} & \lesssim \nu^{-2/3}\norm{m \Delta_L U^1_{\neq}}_{L^2 H^N} \lesssim \epsilon\nu^{-7/6} \\ 
\norm{\Delta_L U^1_{\neq}}_{L^2 H^{3/2+}} & \lesssim \nu^{-2/3}\norm{m \Delta_L U^1_{\neq}}_{L^2 H^{3/2+}} \lesssim \epsilon\nu^{-5/6} \\ 
\norm{\grad_L U^1_{\neq}}_{L^2 H^{3/2+}} & \lesssim \epsilon \nu^{-1/2} \\ 
\norm{\grad_L U^1_{\neq}}_{L^2 H^{N}} & \lesssim \epsilon \nu^{-2/3} \\
\norm{\partial_X U^j_{\neq}}_{L^2 H^{N}} & \lesssim \norm{m \Delta_L U^j_{\neq}}_{L^2 H^{N}} \lesssim \epsilon \nu^{-1/6}\mathbf{1}_{j\neq1} + \epsilon \nu^{-1/2}\mathbf{1}_{j=1} \\ 
\norm{\partial_X U^j_{\neq}}_{L^2 H^{3/2+}} & \lesssim \norm{m \Delta_L U^j_{\neq}}_{L^2 H^{3/2+}} \lesssim \epsilon \nu^{-1/6} \\ 
\norm{\partial_X \grad_L U^j_{\neq}}_{L^2 H^{N}} & \lesssim \norm{m^{1/2} \Delta_L U^j_{\neq}}_{L^2 H^N} \lesssim \nu^{-1/3}\norm{m \Delta_L U^j_{\neq}}_{L^2 H^N} \lesssim \epsilon \nu^{-1/2}\mathbf{1}_{j\neq 1} + \epsilon \nu^{-5/6}\mathbf{1}_{j = 1}  \\ 
\norm{\partial_X \grad_L U^j_{\neq}}_{L^2 H^{3/2+}} & \lesssim \norm{m^{1/2} \Delta_L U^j_{\neq}}_{L^2 H^{3/2+}} \lesssim \nu^{-1/3}\norm{m \Delta_L U^j_{\neq}}_{L^2 H^{3/2+}} \lesssim \epsilon \nu^{-1/2}. 
\end{align*}
Therefore, it follows that 
\begin{align*}
NLP(1,j,\neq,\neq) & \lesssim \epsilon^3 \nu^{-4/3}. 
\end{align*}
For the other $\neq,\neq$ terms we can use the simpler treatment via \eqref{ineq:0freqIBP}, \eqref{mlowbound}, and Proposition \ref{lem:BasicApriori},  
\begin{align*}
NLP(i,j,\neq,\neq)\mathbf{1}_{i,j\neq 1} & \lesssim \left(\norm{\grad_L B Q^3}_{L^2 L^2} + \norm{\grad C}_{L^2 H^{N+1}}\norm{BQ^3}_{L^\infty L^2} \right) \norm{\grad_L U^i_{\neq}}_{L^\infty H^{N}} \norm{\grad_L U^j_{\neq}}_{L^2 H^{N}} \mathbf{1}_{i,j\neq 1}\\
& \lesssim \nu^{-2/3} \left(\norm{\grad_L B Q^3}_{L^2 L^2} + \norm{\grad C}_{L^2 H^{N+1}}\norm{BQ^3}_{L^\infty L^2} \right) \\ & \quad\quad \times \norm{m\Delta_L U^i_{\neq}}_{L^\infty H^{N}} \norm{m \Delta_L U^j_{\neq}}_{L^2 H^{N}} \mathbf{1}_{i,j\neq 1}\\
& \lesssim \epsilon^3 \nu^{-4/3}. 
\end{align*}
This completes all of the nonlinear pressure terms. 

\subsection{Dissipation error terms}
Next turn to the dissipation error terms,
\begin{align*}
DE = \nu\int\int B Q^3 B \left( G \partial_{YY}^L Q^3 + 2\psi_z \partial_{ZY}^L Q^3 \right) dV dt = \mathcal{E}_1 +  \mathcal{E}_2. 
\end{align*}
We will need a slightly more refined treatment here than was used in \S\ref{sec:DEQ2}. 
As $\mathcal{E}_{1}$ is slightly harder we will just treat this term and omit the treatment of $\mathcal{E}_2$ for brevity.  
At the zero $X$ frequency we have, via integration by parts and the product rule, 
\begin{align*}
\mathcal{E}_{1;0} & = \nu\int\int B Q^3_0 B \left( G \partial_{YY} Q^3_0\right) dV dt \\ 
& \lesssim \nu\norm{Q^3_0}_{L^\infty H^N}\norm{\grad G}_{L^2 H^N} \norm{\grad Q^3_0}_{L^2 H^N} + \nu\norm{G}_{L^\infty H^N} \norm{\grad Q^3_0}^2_{L^2 H^N} \\ 
& \lesssim \epsilon^3 \nu^{-1}. 
\end{align*}
Turn next to the non-zero frequencies. Via integration by parts, Lemma \ref{lem:mcomm}, and \eqref{mlowbound}, 
\begin{align*}
\mathcal{E}_{1;\neq} & = \nu \int B Q^3_{\neq} B\left(G \partial_{YY}^L Q^3_{\neq} \right) dV dt \\ 
& \lesssim \nu\norm{\grad_L B Q^3_{\neq}}_{L^2 L^2} \norm{G}_{L^\infty H^{N+1}} \norm{m^{1/2} \grad_L Q^3_{\neq}}_{L^2 H^{N}} \\ 
 & \quad + \nu\norm{B Q^3_{\neq}}_{L^2 L^2} \norm{G}_{L^\infty H^{N+1}} \norm{\grad_L Q^3_{\neq}}_{L^2 H^{N}} \\
& \lesssim \epsilon^3 \nu^{-4/3}. 
\end{align*}
This completes the treatment of the dissipation error terms. 

\section{Energy estimates on $Q^1$}
The energy estimates on $Q^1$ are generally much simpler than estimates on $Q^3$ as the bounds \eqref{boundQ1d}, \eqref{boundQ10short}, and \eqref{boundQ10} are so much weaker than \eqref{boundQ3} (the lift-up effect growth is generally much larger than what the nonlinear terms could do in this regime).   
 
\subsection{Energy estimate on $Q^1_{\neq}$ in $H^N$} 
An energy estimate gives (recall the shorthand \eqref{ABshort}), 
\begin{align*}
& \frac{1}{2} \| B Q^1_{\neq}(T) \|_{L^2}^2 + \nu \|  \nabla_L B Q^1_{\neq} \|_{L^2  L^2}^2 + \| m \sqrt{-\dot{M} M} Q^1_{\neq} \|_{L^2  H^N}^2 + \| M \sqrt{-\dot{m} m} Q^1_{\neq} \|_{L^2  H^N}^2\\
& =  \frac{1}{2} \| B Q^1_{\neq}(1) \|_{L^2}^2 + \int_1^T \int B Q^1_{\neq} B \left[-Q^2_{\neq} - 2\partial^t_{XY} U^1_{\neq} + 2\partial_{XX} U^2_{\neq}  \right.\\
& \qquad \qquad  \qquad \qquad - ((\widetilde{U}_0 \cdot \nabla  + U_{\neq} \cdot \nabla_t) Q^1)_{\neq} - (Q^j \partial^t_{i}U^1)_{\neq}  - 2(\partial^t_i U^j \partial_i^t \partial^t_j U^1)_{\neq}\\
& \left. \qquad \qquad  \qquad \qquad \qquad \qquad  + \partial_X \left(\partial^t_i U^j \partial^t_j U^i\right) + \nu (\widetilde{\Delta}_t - \Delta_L) Q^1_{\neq} \right] \,dV\,dt\\
& =\frac{1}{2} \| m M Q^1_{\neq}(1) \|_{H^N}^2 + LU + LS + LP + \mathcal{T} +  NLS1 +  NLS2+ NLP + DE.
\end{align*}
Several terms above can be estimated exactly like the corresponding terms for $Q^3$, namely $LS$, $LP$, and $DE$. Therefore, we omit the estimates of these terms for brevity, and only treat the others. 

\subsubsection{The lift up term $LU$}
The lift-up effect term is treated via Proposition \ref{prop:EDID}, which implies 
\begin{align*}
LU & = -\int_1^T \int B Q^1_{\neq} B Q^2_{\neq} dV dt   \lesssim \norm{B Q^1_{\neq}}_{L^2 L^2} \norm{A Q^2_{\neq}}_{L^2 L^2}  \lesssim C_0 \epsilon^2 \nu^{-2/3},  
\end{align*}
which is consistent with the estimate as stated for $C_0$ chosen sufficiently large. 

\subsubsection{The stretching and pressure terms $NLS1$, $NLS2$, and $NLP$}
We will focus on $NLS1$ and $NLS2$; the $NLP$ terms can be treated analogously to the latter. 

Consider first the $NLS1(0,\neq,1)$ terms. Using a paraproduct decomposition as has been done several times above and applying Lemma \ref{lem:mcomm}, we get 
\begin{align*}
NLS1(0,\neq,1) & \lesssim \norm{B Q^1_{\neq}}_{L^2 L^2}\norm{Q^1_0}_{L^\infty H^{5/2+}} \norm{\partial_X m^{1/2} U^1_{\neq}}_{L^2 H^N} \\ & \quad + \norm{B Q^1_{\neq}}_{L^2 L^2}\norm{Q^1_0}_{L^\infty H^N} \norm{\partial_X U^1_{\neq}}_{L^2 H^{3/2+}} \\ 
& \lesssim \nu^{-2/3}\epsilon^2 \left( \epsilon \nu^{-4/3}\right). 
\end{align*}
For corresponding terms with $j\neq 1$ an easier treatment is available: 
\begin{align*}
NLS1(0,\neq,j\neq 1) & \lesssim \norm{BQ^1_{\neq}}_{L^2 L^2}\norm{Q^j_0}_{L^\infty H^N} \norm{\grad_L U^1_{\neq}}_{L^2 H^N} \lesssim \epsilon^{-1/3-1/6-1/2-1/3} = \left(\epsilon \nu^{-2/3}\right) \nu^{-2/3}\epsilon^{2}.  
\end{align*}
Similarly, (noting that $j \neq 1$ by the nonlinear structure): 
\begin{align*}
NLS1(j,\neq,0) & \lesssim \norm{BQ^1_{\neq}}_{L^2 L^2}\norm{m^{1/2} Q^j_{\neq}}_{L^2 H^N} \norm{U^1_{0}}_{L^\infty H^{N+2}} \lesssim \left(\epsilon \nu^{-4/3}\right) \epsilon^{2} \nu^{-2/3}. 
\end{align*}
Finally for the $NLS1(i,\neq,\neq)$ terms we may use another straightforward argument. By \eqref{ineq:mDelTrick},  
\begin{align*}
NLS1(j,\neq,\neq) & \lesssim \norm{B Q^1_{\neq}}_{L^2 L^2}\norm{Q^1_{\neq}}_{L^\infty H^N} \norm{\partial_X U^1_{\neq}}_{L^2 H^N} + \norm{B Q^1_{\neq}}_{L^2 L^2}\norm{Q^{2,3}_{\neq}}_{L^\infty H^N} \norm{\grad_L U^1_{\neq}}_{L^2 H^N} \\ 
& \lesssim \left(\epsilon \nu^{-4/3}\right)\nu^{-2/3}\epsilon^{2}. 
\end{align*}
This completes the $NLS1$ terms.

Turning to $NLS2$, we have first, since $j \neq 1$,
\begin{align*}
NLS2(i,j,\neq,0) \lesssim \| B Q^1 \|_{L^2 L^2} \| \nabla_L U^{2,3}_{\neq} \|_{L^2 H^N} \| \grad U_0^1 \|_{L^\infty H^{N+1}} \lesssim \left(\epsilon \nu^{-4/3}\right)\epsilon^2 \nu^{-2/3}. 
\end{align*}
Next, we rely on Lemma\ref{lem:mcomm} and \eqref{ineq:mdjtf} (for the $j=1$ case), \eqref{ineq:mDelTrick}, and \eqref{mlowbound},  
\begin{align*}
NLS2(i,j,0,\neq) & \lesssim \| BQ^1_{\neq} \|_{L^2 L^2} \| \grad_t U_0^1 \|_{L^\infty H^{N+1}} \| m^{1/2} \partial_X \grad_L U^1_{\neq} \|_{L^2 H^N}\mathbf{1}_{j=1} \\ 
 & \quad + \| BQ^1_{\neq} \|_{L^2 L^2} \| \grad_t U_0^{2,3} \|_{L^\infty H^{N}} \| \Delta_L U^1_{\neq} \|_{L^2 H^N}\mathbf{1}_{j\neq1} \\ 
& \lesssim \| BQ^1_{\neq} \|_{L^2 L^2} \| U_0^1 \|_{L^\infty H^{N+2}} \| m \Delta_L U^1_{\neq} \|_{L^2 H^N}\mathbf{1}_{j=1} \\ 
 & \quad + \nu^{-2/3}\| BQ^1_{\neq} \|_{L^2 L^2} \| U_0^{2,3} \|_{L^\infty H^{N+1}} \| m \Delta_L U^1_{\neq} \|_{L^2 H^N}\mathbf{1}_{j\neq1} \\ 
& \lesssim \left( \epsilon \nu^{-4/3} \right) \epsilon^2 \nu^{-2/3}
\end{align*}
Finally, \eqref{ineq:neqfreqIBP}, a paraproduct decomposition, \eqref{ineq:mDelTrick}, and Propositions \ref{lem:BasicApriori} and \ref{prop:EDID} imply, 
\begin{align*}
NLS2(i,j,\neq,\neq) & \lesssim \| B Q^1_{\neq} \|_{L^2 L^2}\left(\| \grad_L U^i_{\neq} \|_{L^\infty H^{3/2+}} \| \Delta_L U^1_{\neq} \|_{L^2 H^N} + \| \grad_L U^i_{\neq} \|_{L^\infty H^{N}} \| \Delta_L U^1_{\neq} \|_{L^2 H^{3/2+}}\right)\mathbf{1}_{i\neq 1} \\  
& \quad + \| B Q^1_{\neq} \|_{L^2 L^2}\left(\| \grad_L U^1_{\neq} \|_{L^\infty H^{3/2+}} \| \partial_X \grad_L U^1_{\neq} \|_{L^2 H^N} + \| \grad_L U^1_{\neq} \|_{L^\infty H^{N}} \| \partial_X \grad_L U^1_{\neq} \|_{L^2 H^{3/2+}}\right) \\ 
& \lesssim \mathbf{1}_{i \neq 1} \left(\epsilon \nu^{-4/3}\right) \epsilon^2 \nu^{-2/3} + \mathbf{1}_{i = 1} \left(\epsilon \nu^{-1}\right) \epsilon^2 \nu^{-2/3}, 
\end{align*}
which suffices for $\nu \epsilon^{-4/3} \ll 1$. 

\subsubsection{Transport nonlinearity} \label{albatross}
These terms can mostly be treated as in \S\ref{puffin}, however, one must check the contributions from $Q_0^1$. 
As in \S\ref{toucan} and \S\ref{puffin}, we subdivide based on frequency (note the slight difference since we are only focusing on non-zero frequencies here): 
\begin{align*}
\mathcal{T} & = \int_1^T \int  BQ^1_{\neq} B \left( \tilde U_0 \cdot \grad Q^1_{\neq} \right) dV dt + \int_1^T \int B Q^1_{\neq} B \left( U_{\neq} \cdot \grad_t Q^1_0 + U_{\neq} \cdot \nabla_t Q^1_{\neq} \right) dV dt \\ 
& =  \mathcal{T}_{0\neq} + \mathcal{T}_{\neq0} + \mathcal{T}_{\neq \neq}. 
\end{align*}
The terms $\mathcal{T}_{0\neq}$ and $\mathcal{T}_{\neq \neq}$ can be treated as in \S\ref{puffin} and are hence omitted for the sake of brevity.
Hence, turn to the remaining $\mathcal{T}_{\neq 0}$. Here we have (note the nonlinear structure that eliminates $U^1_{\neq}$), 
\begin{align*}
\mathcal{T}_{\neq 0} & \lesssim \norm{B Q^1_{\neq}}_{L^2 L^2} \norm{U^{2,3}_{\neq}}_{L^\infty H^N} \norm{\grad Q^1_0}_{L^2 H^N} \lesssim \epsilon^3 \nu^{-1/3-1/6-3/2} \lesssim \nu^{-2/3}\epsilon^2 \left(\epsilon \nu^{-4/3}\right). 
\end{align*}

\subsection{Long-time energy estimate on $Q_0^1$} \label{sec:LongQ01}
In this section we improve the estimate \eqref{boundQ10}. First, $Q_0^1$ solves the equation
\begin{align*} 
\partial_t Q_0^1 - \nu \widetilde{\Delta}_t Q_0^1+ Q^2_0 = - ((\widetilde{U}_0 \cdot \nabla + U_{\neq} \cdot \nabla_t)Q^1)_0 - (Q^j \partial_j^t U^1)_0 - 2(\partial_i^t U^j \partial_j^t \partial_i^t U^1)_0.
\end{align*} 
An energy estimate gives
\begin{align*}
& \frac{1}{2} \| Q^1_0(T) \|_{H^N}^2  + \nu  \|  \nabla_L  Q^1_0 \|_{L^2  H^N}^2 \\
& =  \frac{1}{2} \| (Q^1(1))_0 \|_{H^N}^2 + \int_1^T \int \langle D \rangle^N  Q^1_0 \langle D \rangle^N  \left[- Q^2_0 - ((\widetilde{U}_0 \cdot \nabla + U_{\neq} \cdot \nabla_t)Q^1)_0 -  (Q^j \partial_j^t U^1)_0 \right. \\
& \left. \qquad \qquad \qquad \qquad  \qquad \qquad \qquad \qquad \qquad - 2(\partial_i^t U^j \partial_j^t \partial_i^t U^1)_0 + \nu (\widetilde{\Delta}_t - \Delta_L) Q^1_0 \right]\,dV \,dt \\
& = \frac{1}{2} \| (Q^1(1))_0 \|_{H^N}^2 + LU + \mathcal{T} + NLS1 + NLS2 + DE.
\end{align*}

\subsubsection{The lift up term $LU$}
Using Lemma \ref{lem:CoefCtrl} and Proposition \ref{lem:BasicApriori}, 
\begin{align*}
LU & = -\int_1^T \int \jap{D}^N Q^1_0 \jap{D}^N \left[\Delta U_0^2 + G\partial_{YY} U_0^2 + 2\psi_{z} \partial_{YZ} U_0^1 + \Delta_t C \partial_Y U_0^2\right] dV dt \\  
& \lesssim \left(1 + \norm{\grad C}_{L^\infty H^N}\right)\norm{\grad Q_0^1}_{L^2 H^N}\norm{\grad U_0^2}_{L^2 H^N} + \norm{\grad C}_{L^2 H^{N+1}} \norm{Q_0^1}_{L^\infty H^N} \norm{\grad U_0^2}_{L^2 H^N} \\ 
& \lesssim C_0 \epsilon^2 \nu^{-2}, 
\end{align*} 
which is consistent with the bootstrap argument provided $C_0$ is chosen sufficiently large.  

\subsubsection{Transport nonlinearity} \label{Cormorant}
Similar to \S\ref{albatross}, we subdivide based on frequency:  
\begin{align*}
\mathcal{T} & = \int_1^T \int \jap{D}^N Q_0^1 \jap{D}^N \left( \tilde U_0 \cdot \grad Q_0^1 \right) dV dt + \int_1^T \int \jap{D}^N Q_0^1 \jap{D}^N \left( U_{\neq} \cdot \grad_t Q_{\neq}^1 \right) dV dt \\ & = \mathcal{T}_{0} + \mathcal{T}_{\neq}. 
\end{align*}
The zero frequencies $\mathcal{T}_0$ can be treated as in \S\ref{toucan} and are hence omitted for brevity. 
For the non-zero frequencies, first apply the divergence free condition: 
\begin{align*}
\mathcal{T}_{\neq} & = \int_1^T \int \jap{D}^N Q_0^1 \jap{D}^N \grad_t \cdot \left( U_{\neq} Q_{\neq}^1 \right)_0 dV dt. 
\end{align*}
Due to the $X$ average, the contribution from $U^1_{\neq}$ is crucially eliminated as well as the term involving $-t \partial_X$ in $\partial_Y^L$.  
Hence, by \eqref{ineq:0freqIBP}, \eqref{mlowbound} and Propositions \ref{lem:BasicApriori} and \ref{prop:EDID}:  
\begin{align*}
\mathcal{T}_{\neq}& \lesssim \norm{\grad Q_0^1}_{L^2 H^N} \norm{U^{2,3}_{\neq}}_{L^2H^N} \norm{Q^1_{\neq}}_{L^\infty H^N} + \norm{Q_0^1}_{L^\infty H^N} \norm{C}_{L^\infty H^{N+2}}\norm{U^{2,3}_{\neq}}_{L^2H^N} \norm{Q^1_{\neq}}_{L^2 H^N} \\ 
& \lesssim \nu^{-2}\epsilon^2\left(\epsilon \nu^{-2/3} + \epsilon^2\nu^{-4/3} \right), 
\end{align*}
which suffices. 

\subsubsection{Nonlinear stretching terms} 
Consider first $NLS1(0,0)$, which can be treated similar to $NLS(0,0)$ and $NLP(0,0)$ terms above: by Proposition \ref{lem:BasicApriori}, 
\begin{align*}
NLS1(0,0) & \lesssim \norm{Q_0^1}_{L^\infty H^N} \norm{\grad U^j_0}_{L^2 H^{N+1}} \norm{\partial_j^t U_0^1}_{L^2 H^N} \lesssim   \frac{\epsilon}{\nu}\left(\frac{\epsilon^2}{\nu^2}\right), 
\end{align*}
which is sufficient. $NLS2(0,0)$ is treated similarly and is hence omitted. 

Turn next to $NLS1(\neq,\neq)$:
\begin{align*}
& NLS1(\neq,\neq,j \neq 1)  \lesssim \norm{Q_0^1}_{L^\infty H^N} \norm{Q^j_{\neq}}_{L^2 H^N} \norm{\partial_j^t U_{\neq}^1}_{L^2 H^N} \lesssim \frac{\epsilon}{\nu^{2/3}}\left(\frac{\epsilon^2}{\nu^2}\right)\\
& NLS1(\neq,\neq,1)  \lesssim \norm{Q_0^1}_{L^\infty H^N} \norm{Q^1_{\neq}}_{L^2 H^N} \norm{\partial_X U_{\neq}^1}_{L^2 H^N}
 \lesssim \frac{\epsilon}{\nu^{2/3}} \left(\frac{\epsilon^2}{\nu^2}\right), 
\end{align*}
which is sufficient. 
The $NLS2(\neq,\neq)$ term is treated analogously and is hence omitted for brevity.

\subsubsection{The dissipation error terms $DE$} These are controlled as in \S\ref{sec:DEQ2} and are hence omitted for brevity. 

\subsection{Short-time energy estimate on $Q^1_0$ in $H^N$} \label{sec:ShortQ01}
Here we deduce \eqref{boundQ10short}, which refer to as a ``short-time'' estimate since it provides a superior estimate on $\norm{Q_0^1(t)}_{H^N}$ for $t \ll \nu^{-1}$ versus the ``long-time'' estimate $\norm{Q_0^1(t)}_{H^N} \lesssim \epsilon \nu^{-1}$.  

For this estimate (and the similar \eqref{boundU10short}), we use a slightly 
different method than we have applied for most estimates in the paper. 
Consider the differential equality: 
\begin{align}
\frac{1}{2}\frac{d}{dt} \left(\jap{t}^{-2} \norm{Q^1_0(t)}_{H^N}^2\right) & = -\frac{t}{\jap{t}^4} \norm{Q^1_0(t)}_{H^N}^2 - \jap{t}^{-2}\int \jap{D}^N Q_0^1 \jap{D}^N Q_0^2 dV \nonumber \\ & \quad - \nu \jap{t}^{-2} \norm{\grad Q_0^1}_{H^N}^2 + \mathcal{NL}, \label{eq:Q10diff}
\end{align}
where, using the shorthand from \S\ref{sec:short} analogous to that used in \S\ref{sec:LongQ01}, 
\begin{align*}
\mathcal{NL} = \mathcal{T} + NLS1 + NLS2 + DE
\end{align*}
denotes the contributions from all of the nonlinear terms. 
For the lift-up effect term, by \eqref{boundQ2},  
\begin{align*}
- \jap{t}^{-2}\int \jap{D}^N Q_0^1 \jap{D}^N Q_0^2 dV \leq \jap{t}^{-2}\norm{Q_0^1}_{H^N}\norm{Q_0^2}_{H^N} \leq \jap{t}^{-2} 8 \epsilon \| Q^1_0 \|_{H^N},
\end{align*}
and hence \eqref{eq:Q10diff} becomes
\begin{align}
\frac{1}{2}\frac{d}{dt} \left(\jap{t}^{-2} \norm{Q^1_0(t)}_{H^N}^2\right) + \nu\jap{t}^{-2} \norm{\grad Q_0^1}_{H^N}^2 \leq
\frac{1}{\jap{t}^2}\left(8\epsilon - \frac{t}{\jap{t}^2}\norm{Q_0^1(t)}_{H^N}\right)\norm{Q^1_0(t)}_{H^N} + \mathcal{NL}. \label{ineq:Q10diff2}
\end{align}
It follows from this differential inequality that if $\mathcal{NL} \leq \frac{1}{2}\nu \jap{t}^{-2} \| \nabla Q_0^1 \|_{H^N}^2 + f$, with $\| f \|_{L^1} \leq C_0\epsilon^2$, then \eqref{boundQ10short} holds for $C_0$ sufficiently large. 

\subsubsection{Transport nonlinearity} 
As in \S\ref{Cormorant}, we  divide the transport nonlinearity into two pieces 
\begin{align*}
\mathcal{T} & = \jap{t}^{-2} \int \jap{D}^N Q_0^1 \jap{D}^N\left(\tilde U_0 \cdot \grad Q_0^1 + \left(U_{\neq} \cdot \grad_t Q_{\neq}^1 \right)_0 \right) dV  = \mathcal{T}_0 + \mathcal{T}_{\neq}. 
\end{align*}
The first term is treated analogously to the treatment in \S\ref{toucan}: 
\begin{align*} 
\mathcal{T}_0 & \lesssim \norm{U_0^3}_{H^N} \norm{\jap{t}^{-1} \grad Q_0^1}_{H^N}^2 + \left(\norm{g}_{L^2} + \| \nabla g \|_{H^N} + \norm{\grad U_0^3}_{H^N} \right) \norm{\jap{t}^{-1} \grad Q_0^1}_{H^N}\norm{\jap{t}^{-1} Q_0^1}_{H^N} \\ 
& \lesssim \epsilon  \norm{\jap{t}^{-1} \grad Q_0^1}_{L^2 H^N}^2 + \epsilon \left(\norm{g}^2_{L^2} + \norm{\grad U_0^3}^2_{H^N} + \| \nabla g \|_{H^N}^2 \right);   
\end{align*}
the first term is absorbed by the dissipation in \eqref{ineq:Q10diff2} and the latter term integrates to $O(\epsilon^3 \nu^{-1})$. 

For $\mathcal{T}_{\neq}$, we first use the divergence free condition as in \S\ref{Cormorant}: 
\begin{align*}
\mathcal{T}_{\neq} = \jap{t}^{-2} \int \jap{D}^N Q_0^1 \jap{D}^N \grad_t \cdot \left(U_{\neq}  Q_{\neq}^1 \right)_0 dV, 
\end{align*}
which eliminates the contribution from $U^1$ and $-t\partial_X$. 
By \eqref{ineq:0freqIBP}, \eqref{mlowbound}, and Proposition \ref{lem:BasicApriori}, and for any constant $K$,
\begin{align*}
\mathcal{T}_{\neq} & \lesssim \jap{t}^{-2} \norm{\grad Q_0^1}_{H^N} \norm{U^{2,3}_{\neq}}_{H^N}\norm{Q_{\neq}^1}_{H^N} 
+  \jap{t}^{-2} \norm{Q_0^1}_{H^N} \norm{C}_{H^{N+2}} \norm{U^{2,3}_{\neq}}_{H^N}\norm{Q_{\neq}^1}_{H^N} \\ 
& \lesssim \frac{\nu}{K} \jap{t}^{-2} \norm{\grad Q_0^1}_{H^N}^2 +  \frac{K}{\nu} \jap{t}^{-2} \norm{U^{2,3}_{\neq}}_{H^N}^2 \norm{Q_{\neq}^1}_{H^N}^2  \\ 
& \qquad \qquad + \jap{t}^{-2} \norm{Q_0^1}_{H^N} \norm{C}_{H^{N+2}} \norm{U^{2,3}_{\neq}}_{H^N}\norm{Q_{\neq}^1}_{H^N}.
\end{align*}
The first term is absorbed by the dissipation in \eqref{ineq:Q10diff2} for $K$ sufficiently large; the remaining terms integrate to 
$\epsilon^4 \nu^{-3}$ and $\epsilon^4 \nu^{-2-1/6}$ using the $L^\infty$ controls on $U$, $Q$, and $C$, which is sufficient (note that $\epsilon^4 \nu^{-3} \ll \epsilon^2$ is borderline as it requires $\epsilon \nu^{-3/2} \ll 1$).  

\subsubsection{Nonlinear stretching}
Turn first to the interaction of zero frequencies. Consider $NLS1$ (noting that $j \neq 1$): 
\begin{align*}
NLS1(0,0) & = \jap{t}^{-2} \int \jap{D}^N Q_0^1 \jap{D}^N \left( \Delta_t U^j_0 \partial_j^t U_0^1\right) dV \lesssim \jap{t}^{-2}\norm{Q_0^1}_{H^N} \norm{\grad U^j_0}_{H^{N+1}} \norm{\grad U_0^1}_{H^N}.  
\end{align*}
Hence, by Proposition \ref{lem:BasicApriori} and Cauchy-Schwarz in time, 
\begin{align*}
\int_1^T NLS1(0,0) dt & \lesssim \norm{\jap{t}^{-1}Q_0^1}_{L^\infty H^N} \norm{U^j_0}_{L^\infty H^{N+2}} \norm{\grad U_0^1}_{L^2 H^N}  \lesssim \epsilon^3 \nu^{-3/2}, 
\end{align*}
which is sufficient for $\epsilon \nu^{-3/2}$ sufficiently small.  
The $NLS2(0,0)$ terms can be treated similarly and are hence omitted for the sake of brevity. 

Consider next $NLS1(\neq,\neq)$. 
Using that $\grad_t \cdot Q = 0$ due to the divergence free condition and \eqref{ineq:0freqIBP}, we have for any $K$, 
\begin{align*}
NLS1(\neq,\neq) & = \jap{t}^{-2} \int \jap{D}^N Q_0^1 \jap{D}^N \left( Q^j_{\neq} \partial_j^t U_{\neq}^1\right)_{0} dV \\
& = \jap{t}^{-2} \int \jap{D}^N Q_0^1 \jap{D}^N \partial_j^t \left( Q^j_{\neq} U_{\neq}^1\right)_{0} dV \\
& \lesssim \jap{t}^{-2} \left(\norm{\grad Q_0^1}_{H^N} + \norm{\grad C}_{H^{N+1}}\norm{Q_0^1}_{H^N}\right) \norm{Q^{2,3}_{\neq}}_{H^N}\norm{U_{\neq}^1}_{H^N} \\ 
& \lesssim \frac{\nu}{K}\jap{t}^{-2} \norm{\grad Q_0^1}_{H^N}^2 + \frac{K}{\nu}\jap{t}^{-2}\norm{Q^{2,3}_{\neq}}_{H^N}^2\norm{U_{\neq}^1}_{H^N}^2 \\ & \quad + \jap{t}^{-2} \norm{\grad C}_{H^{N+1}}\norm{Q_0^1}_{H^N} \norm{Q^{2,3}_{\neq}}_{H^N}\norm{U_{\neq}^1}_{H^N}.   
\end{align*}
For $K$ large, the first term is absorbed by the dissipation. By the $L^\infty$ controls from Proposition \ref{lem:BasicApriori}, the second factor integrates to $\epsilon^2\left(\epsilon^2 \nu^{-3}\right)$ and the third factor integrates to  $\epsilon^2\left( \epsilon^4 \nu^{-4}\right)$, both of which are sufficient. 
The $NLS2(\neq,\neq)$ term is treated similarly and is hence omitted. 

\subsubsection{Dissipation error estimates}
Write 
\begin{align*}
DE & = \jap{t}^{-2} \nu\int \jap{D}^N Q^1_0 \jap{D}^N \left(G \partial_{YY} Q^1_0 + 2\psi_z \partial_{YZ} Q^1_0\right) dV = \mathcal{E}_1 + \mathcal{E}_2. 
\end{align*}
We only bound $\mathcal{E}_1$; $\mathcal{E}_2$ is bounded in the same manner. Via integration by parts and the Sobolev product rule,  
\begin{align*}
\mathcal{E}_1 & \lesssim \jap{t}^{-2}\nu\norm{G}_{H^{N}}\norm{\grad Q_0^1}_{H^N}^2 + \jap{t}^{-2}\nu\norm{\grad G}_{H^{N}}\norm{Q_0^1}_{H^N}\norm{\grad Q_0^1}_{H^N} \\ 
& \lesssim \jap{t}^{-2} \epsilon\norm{\grad Q_0^1}_{H^N}^2 + \epsilon \nu^2 \norm{\grad C}_{H^{N+1}}^2. 
\end{align*}
The first term is absorbed by the leading order dissipation in \eqref{ineq:Q10diff2} and the other term integrates to $O(\epsilon^3 \nu^{-1})$, which suffices. 
This completes the short-time energy estimate on $Q_0^1$. 

\section{Energy estimate on $U^1_{\neq}$}
In this section we deduce the control \eqref{boundU1neq}; this is relatively easy due to the lower regularity, but it is slightly different to work in velocity form than the previous $Q^i$ estimates. 
The entire point of this estimate is that by working directly on the velocity, it is easier to take advantage of the inviscid damping from \eqref{ineq:gradU2} in the lift-up effect term, which is the reason for the large growth of $Q^1_{\neq}$. 

From the momentum equations, the non-zero frequencies of $U^1$ solve 
$$
\partial_t U^1_{\neq} - \nu \widetilde{\Delta}_t U^1_{\neq}  = -U^2_{\neq} + 2 \partial_{XX} \Delta_t^{-1} U^2_{\neq} - \left( \left[\widetilde{U}_0 \cdot \nabla + U_{\neq} \cdot \nabla_t \right] U^1\right)_{\neq} + \partial_X \Delta_t^{-1} (\partial_i^t U^j \partial_j^t U^i )_{\neq}.
$$
An energy estimate gives
\begin{align*}
& \frac{1}{2} \| M U^1_{\neq}(T) \|_{H^{N-1}}^2 + \nu \|M \nabla_L U^1_{\neq}\|_{L^2  H^{N-1}}^2 + \| \sqrt{-\dot{M} M} U^1_{\neq} \|_{L^2  H^{N-1}}^2 \\
& = \frac{1}{2} \| M U^1_{\neq}(1) \|_{H^{N-1}}^2 + \int_1^T \int \langle D \rangle^{N-1} M  U_{\neq}^1 \langle D \rangle^{N-1} M \left[ -U^2_{\neq} + 2 \partial_X^2 \Delta_t^{-1} U^2_{\neq} - (\left[\widetilde{U}_0 \cdot \nabla + U_{\neq} \cdot \nabla_t \right] U^1)_{\neq} \right. \\
& \qquad \qquad  \qquad \qquad \qquad  \qquad  \qquad  \qquad \quad \left. + (\partial_X \Delta_t^{-1} \partial^t_i U^j \partial_j^t U^i )_{\neq} + \nu(\widetilde{\Delta}_t - \Delta_L) U^1_{\neq} \right] \,dV\,dt \\
& = \frac{1}{2} \| M U^1_{\neq}(1) \|_{H^{N-1}}^2 + LU + LP + \mathcal{T} + NLP + DE.
\end{align*}

\subsection{Lift-up effect}
Start with the lift-up effect term, which can be bounded through the inviscid damping estimate we have on $U^2_{\neq}$ in $H^{N-1}$ in \eqref{ineq:gradU2}. 
In particular, since $1 \lesssim \sqrt{- \dot{M^0} M^0} |\nabla_L |$ (see \S\ref{sec:FM}), 
\begin{align*}
LU & \lesssim \norm{\sqrt{\dot{M} M} U^1_{\neq}}_{L^2 H^{N-1}} \norm{\grad_L U^2_{\neq}}_{L^2 H^{N-1}} \lesssim C_0\epsilon^2, 
\end{align*}  
which is sufficient for $C_0$ chosen sufficiently big. 
We remark that the simplicity and effectiveness of this estimate is the reason we are working with $U^1_{\neq}$.  

\subsection{Linear pressure}
We now turn to the linear pressure term, $LP$, which we bound by relying first on the inequality $1 \lesssim \sqrt{- \dot{M^0} M^0} |\nabla_L |$, and then on Lemmas \ref{lem:EDPEL} and Proposition \ref{prop:EDID}, 
\begin{align*}
LP & \lesssim \| \sqrt{\dot M M} U^1_{\neq} \|_{L^2 H^{N-1}} \| \nabla_L  \Delta_L \Delta_t^{-1} U^2_{\neq} \|_{L^2 H^{N-1}} \\
& \lesssim \| \sqrt{\dot M M} U^1_{\neq} \|_{L^2 H^{N-1}} \left[ \| \nabla_L U^2_{\neq} \|_{L^2 H^{N-1}} + \| \nabla C \|_{L^\infty H^{N+1}} \|  U^2_{\neq} \|_{L^2 H^{N-1}} \right] \\
& \lesssim C_0 \epsilon^2.
\end{align*}
which is sufficient for $C_0$ sufficiently large. Note that the inviscid damping of $U^2$ is also very important here.  

\subsection{Transport nonlinearity}
Turning to the transport term, we subdivide analogous to what has been applied in e.g. \S\ref{toucan}: 
\begin{align*}
\mathcal{T} & = \int_1^T \int \langle D \rangle^{N-1} M  U_{\neq}^1 \langle D \rangle^{N-1} M \left[ \widetilde{U}_0 \cdot \nabla U^1_{\neq} + U_{\neq} \cdot \nabla_t  U^1_{\neq} + U_{\neq} \cdot \grad_t U^1_0 \right] \, dV \,dt \\
& = \mathcal{T}_{0\neq} + \mathcal{T}_{\neq\neq} + \mathcal{T}_{\neq 0}.
\end{align*}
The term $\mathcal{T}_{0\neq}$ is treated like the analogous term in \S\ref{toucan} and is hence omitted. 
The term $\mathcal{T}_{\neq 0}$ is treated via the following, using Proposition \ref{lem:BasicApriori} and \eqref{ineq:tdecay} (also ${N-1} > 3/2+$): 
\begin{align*}
\mathcal{T}_{\neq 0} & \lesssim \norm{M U^1_{\neq}}_{L^2 H^{N-1}} \norm{\jap{t} U_{\neq}^{2,3}}_{L^2 H^{N-1}} \norm{\jap{t}^{-1} \grad U_0^1}_{L^\infty H^{N-1}} \lesssim \epsilon^3 \nu^{-1/6-1/2} = \epsilon^3 \nu^{-2/3},
\end{align*}
For $\mathcal{T}_{\neq \neq}$ we may also apply a straightforward argument: 
\begin{align*}
\mathcal{T}_{\neq \neq} & \lesssim \norm{M U^1_{\neq}}_{L^\infty H^{N-1}} \norm{U_{\neq}^{1,2,3}}_{L^2 H^{N-1}} \norm{\grad_L U_{\neq}^1}_{L^2 H^{N-1}} \lesssim \epsilon^3\nu^{-1/6-1/2} = \epsilon^3 \nu^{-2/3}. 
\end{align*}
This completes the transport terms. 

\subsection{Nonlinear pressure}
The nonlinear pressure term can be split into one piece for which both velocity fields have nonzero $X$ frequency, and its complement:
\begin{align*}
NLP & = \int_1^T \int \langle D \rangle^{N-1} M  U_{\neq}^1 \langle D \rangle^{N-1} M  \partial_X \Delta_t^{-1} (\partial_i^t U^j_{\neq} \partial_j^t U^i_{\neq} + 2 \partial_i^t U^j_{0} \partial_j^t U^i_{\neq}) \,dV \\
& = NLP_{\neq} + NLP_0.
\end{align*}
Treating $NLP_{\neq}$ is straightforward: using the divergence free condition, and Lemma \ref{lem:UNLP} we have
\begin{align*}
NLP_{\neq} & =  \int_1^T \int \langle D \rangle^{N-1} M  U_{\neq}^1 \langle D \rangle^{N-1} M  \Delta_t^{-1}\partial_X\partial_i^t ( U^j_{\neq} \partial_j^t U^i_{\neq}) \,dV \\
& \lesssim \| U_{\neq}^1 \|_{L^2  H^{N-1}} \| U_{\neq} \|_{L^\infty  H^{N-1}} \| \grad_L U_{\neq} \|_{L^2  H^{N-1}} 
\lesssim \epsilon^3 \nu^{-1/6-1/2} = \epsilon^3 \nu^{-2/3}. 
\end{align*}
The $NLP_0$ terms are bounded similarly, except for the ones involving $U^1_0$ -- to these we now turn: using the divergence free condition, 
\begin{align*}
 \int_1^T \int \langle D \rangle^{N-1} M  U_{\neq}^1 \langle D \rangle^{N-1} M \partial_X \Delta_t^{-1} ( 2 \partial_i^t U^1_{0} \partial_X U^i_{\neq}) \,dV & \\ & \hspace{-5cm}  =  \int_0^T \int \langle D \rangle^{N-1} M  U_{\neq}^1 \langle D \rangle^{N-1} M  \Delta_t^{-1} \partial_X \partial_i^t ( 2 U^1_{0} \partial_X U^i_{\neq}) \,dV \\
& \hspace{-5cm} \lesssim \norm{M U_{\neq}^1}_{L^2 H^{N-1}} \norm{U_0^1}_{L^\infty H^{N-1}} \left( \| \partial_X U^2 \|_{L^2  H^{N-1}} + \| \partial_X U^3 \|_{L^2  H^{N-1}} \right) \\ 
& \hspace{-5cm} \lesssim \epsilon^3 \nu^{-1/6-1-1/6} = \epsilon^{3}\nu^{-4/3}. 
\end{align*}
This completes the nonlinear pressure terms.

\subsection{Dissipation error}
Finally, the dissipation error is easily dealt with via the same method we have used several times previously: integrating by parts in the second equality,
\begin{align*}
DE & = \nu  \int_1^T \int \langle D \rangle^{N-1} M  U_{\neq}^1 \langle D \rangle^{N-1} M (G \partial_{YY}^L + 2 \psi_z \partial_{YZ}^L) U^1_{\neq})\,dV \,dt \\
& = - \nu  \int_1^T \int \langle D \rangle^{N-1} M \partial_Y^L U_{\neq}^1 \langle D \rangle^{N-1} M  (G \partial_Y^L + 2 \psi_z \partial_{Z}) U^1_{\neq}) \,dV\,dt\\
& \qquad \qquad \qquad \qquad \qquad  - \nu  \int_1^T \int \langle D \rangle^{N-1} M U_{\neq}^1 \langle D \rangle^{N-1} M (\partial_Y G  \partial_Y^L + 2 \partial_Y \psi_y \partial_Z)U^1_{\neq})\,dV\,dt \\
& \lesssim \nu \left[ \| C \|_{L^\infty  H^{N}} \|\nabla_L U^1_{\neq} \|_{L^2  H^{N-1}}^2 + \|\nabla C\|_{L^2  H^{N}} \|U^1_{\neq}\|_{L^\infty  H^{N-1}} \|\nabla_L U^1_{\neq} \|_{L^2  H^{N-1}} \right] \\ 
& \lesssim \epsilon^3\nu^{-1}. 
\end{align*}
This completes the estimate on $U^1_{\neq}$.

\section{Estimates on $C$ and $g$} 

\label{EEC}

\subsection{Energy estimate on $C$} \label{sec:LongC}
In this section, we prove that, under the assumptions of Proposition~\ref{propbootstrap} (in particular, the bootstrap assumptions~\eqref{boundsU}, \eqref{boundsQ}, \eqref{boundsC}), the inequality~\eqref{boundC} holds, with $8$ replaced by $4$ on the right-hand side.
Recall \eqref{eq:Cg}. 
An energy estimate gives 
\begin{align*}
& \frac{1}{2} \|   C(T) \|_{H^{N+2}}^2 + \nu \|  \nabla_L  C  \|_{L^2  H^{N+2}}^2 \\
& = \frac{1}{2} \|   C(1) \|_{H^{N+2}}^2 + \int_1^T \int \langle D \rangle^{N+2}   C \langle D \rangle^{N+2}   \left[- \tilde{U}_0 \cdot \grad C + g - U^2_0 + \nu (\widetilde{\Delta}_t - \Delta_L) C \right]\, dV \, dt \\
& = \frac{1}{2} \|   C(1) \|_{H^{N+2}}^2 + \mathcal{T} + L1 + L2 + DE.
\end{align*}
The transport nonlinearity $\mathcal{T}$ can be treated in the same manner as in the $Q^i_0$ energy estimates above and are hence omitted for brevity. 

\subsubsection{The linear term $L1$} \label{sec:LongL1}
Distinguish first between high and low frequencies:
\begin{align*}
L1 & = \int_1^T \int \langle  D \rangle^{N+2}   C \langle D \rangle^{N+2}   \left[ P_{\leq 1} g + P_{>1} g \right] \, dV\,dt  = L1_L + L1_H.
\end{align*}
Low frequencies are estimated by taking advantage of the decay of $g$:
\begin{align*}
L1_L \lesssim \| C \|_{L^\infty  L^2} \| g \|_{L^1 L^2} \lesssim \frac{C_1 \epsilon}{\nu} (C_0 \epsilon) = \frac{C_0 \nu}{C_1} \left( \frac{C_1 \epsilon}{\nu} \right)^2,
\end{align*}
while high frequencies are estimated with the help of the viscous dissipation:
\begin{align*}
L1_H \lesssim \| \nabla C \|_{L^2  H^N} \| \nabla g \|_{L^2  H^N} \lesssim \frac{C_1 \epsilon}{\nu^{3/2}} \frac{C_0 \epsilon}{\sqrt \nu} = \frac{C_0}{C_1} \left( \frac{C_1 \epsilon}{\nu} \right)^2 .
\end{align*}
Both are consistent with the Proposition \ref{propbootstrap} for $C_1 \gg C_0$. 

\subsubsection{The linear term $L2$}
The approach is analogous to the above term. 
Separating first high and low frequencies:
\begin{align*}
L2 & = \int_1^T \int \langle  D \rangle^{N+2}   C \langle D \rangle^{N+2}   \left[ P_{\leq 1} U^2_0 + P_{> 1} U^2_0 \right] \, dV\,dt  = L2_L + L2_H,
\end{align*}
we estimate low frequencies with the help of~\eqref{boundU02decay}
\begin{align*}
L2_L \lesssim \| C \|_{L^\infty  L^2} \| U^2_0 \|_{L^1 L^2} \lesssim \frac{C_1 \epsilon}{\nu} \frac{C_0 \epsilon}{\nu} = \frac{C_0}{C_1} \left( \frac{C_1 \epsilon}{\nu} \right)^2,
\end{align*}
and high frequencies using viscous dissipation:
\begin{align*}
L2_H \lesssim \| \nabla_L C \|_{L^2  H^N} \| \nabla_L U_0^2 \|_{L^2  H^N} \lesssim \frac{C_1 \epsilon}{\nu^{3/2}} \frac{C_0 \epsilon}{\sqrt \nu} = \frac{C_0}{C_1} \left( \frac{C_1 \epsilon}{\nu} \right)^2 .
\end{align*}
This completes the treatment of the linear terms. 

\subsubsection{Dissipation error terms}
Write 
\begin{align*}
DE = \nu\int_1^T \int \jap{D}^{N+2}C \jap{D}^{N+2}\left(G \partial_{YY}C + 2\psi_z \partial_{YZ} C\right) dV dt = \mathcal{E}_1 + \mathcal{E}_2. 
\end{align*}
The two error terms are treated exactly the same, so consider only $\mathcal{E}_1$. Using a paraproduct decomposition: 
\begin{align*}
\mathcal{E}_1 & =\nu\int_1^T \int \jap{D}^{N+2}C \jap{D}^{N+2}\left(G_{Hi} \partial_{YY}C_{Lo} + G_{Lo} \partial_{YY}C_{Hi} \right) dV dt \\ 
& \lesssim \nu\norm{C}_{L^\infty H^{N+2}}\norm{G}_{L^2 H^{N+2}} \norm{\grad C}_{L^2 H^{5/2+}} \\ & \quad +  \nu\left(\norm{\grad C}_{L^2 H^{N+2}}\norm{G}_{L^\infty H^{3/2+}} + \norm{C}_{L^\infty H^{N+2}}\norm{\grad G}_{L^2 H^{3/2+}}\right)\norm{\grad C}_{L^2 H^{N+2}}\\ 
& \lesssim \epsilon^3\nu^{-3}, 
\end{align*}
which is sufficient for $\epsilon \nu^{-1} \ll 1$.

\subsection{Estimates on $g$}
\label{EEg}

In this section, we prove that, under the assumptions of Proposition~\ref{propbootstrap} (in particular, the bootstrap assumptions~\eqref{boundsU}, \eqref{boundsQ}, \eqref{boundsC}), the inequalities~\eqref{boundg} and~\eqref{boundg1} hold, with $8$ replaced by $4$ on the right-hand side.

\subsubsection{Decay estimate on $g$ in $H^{N-1}$} \label{sec:decayg}
In this section we improve \eqref{boundg}. Recall \eqref{eq:Cg}.  
Therefore, an energy estimate gives
\begin{align*}
& \frac{1}{2} \| T^2 g(T) \|_{H^{N-1}}^2 + \nu \| t^2 \nabla_L g \|_{L^2  H^{N-1}} \\
& =  \frac{1}{2} \| g(1) \|_{H^{N-1}}^2 + \int_1^T \int t^4 \langle D \rangle^{N-1} g  \langle D \rangle^{N-1} \left[ -\tilde U_0 \cdot \grad g - \frac{1}{t} (U_{\neq} \cdot \nabla_t U^1_{\neq})_0 \right. \\
& \qquad \qquad \qquad \qquad\qquad \qquad\qquad \qquad \qquad \qquad \left.+ \nu ( \widetilde{\Delta}_t - \Delta_L) g \right] \,dV\,dt \\
& = \frac{1}{2} \| g(1) \|_{H^{N-1}}^2 + \mathcal{T}_0 + \mathcal{T}_{\neq} + DE; 
\end{align*}
notice the cancellation between the derivative of the time weight and the damping term. 
The estimates of $\mathcal{T}_0$ and $DE$ are obtained similar to the treatment in \S\ref{sec:LongC} and are hence omitted for brevity.
However, a new element appears in the estimate of $\mathcal{T}_{\neq}$. First, notice that 
\begin{align*}
\mathcal{T}_{\neq} & =- \int_1^T \int \langle D \rangle^{N-1} t^2 g  \langle D \rangle^{N-1} t \Big[ \underbrace{(U^1_{\neq} \partial_X U^1_{\neq})_0}_0 + (U^2_{\neq} \partial_Y^t U_{\neq}^1)_0 + (U^3_{\neq} \partial_Z^t U_{\neq}^1)_0 \Big]\,dV\,dt. 
\end{align*}
Therefore, by \eqref{ineq:tdecay} and Proposition \ref{lem:BasicApriori}, it follows that 
\begin{align*}
\mathcal{T}_{\neq} & \lesssim \| t^2 g \|_{L^\infty  H^{N-1}} \left( \| \jap{t} U^2_{\neq} \|_{L^2  H^{N-1}} + \| \jap{t} U^3_{\neq} \|_{L^2  H^{N-1}} \right) \| \nabla_L U^1_{\neq} \|_{L^2  H^{N-1}} \\
& \lesssim \epsilon^3 \nu^{-1}. 
\end{align*}
This completes the improvement of the estimate \eqref{boundg}.  

\subsubsection{Energy estimate on $g$ in $H^{N+2}$} 
From \eqref{eq:Cg}, an energy estimate on $g$ gives
\begin{align*}
& \frac{1}{2} \| g(T) \|_{H^{N+2}}^2 + \nu \|  \nabla_L  g  \|_{L^2  H^{N+2}}^2 \\
& = \frac{1}{2} \|   g(1) \|_{H^{N+2}}^2 + \int_1^T \int \langle D \rangle^{N+2} g \langle D \rangle^{N+2}   \left[-\tilde U_0 \cdot \grad g - \frac{2g}{t} \right.\\
& \qquad \qquad \qquad \qquad  \qquad \qquad \qquad \left.- \frac{1}{t}(U_{\neq} \cdot \nabla_t U^1_{\neq})_0 + \nu (\widetilde{\Delta}_t - \Delta_L) g \right]\, dV \, dt \\
&  = \frac{1}{2} \|   g(1) \|_{H^{N+2}}^2 + \mathcal{T}_0 + L + \mathcal{T}_{\neq} + DE.
\end{align*}
Observe that $L$ does not need to be estimated, since it has a favorable sign. All other terms appearing in the right-hand side can be estimated following the same pattern as in many other instances in this paper (hence these are omitted for the sake of brevity), except for $\mathcal{T}_{\neq}$, to which we now turn. Observe that
\begin{align*} 
\mathcal{T}_{\neq} \leq \| g \|_{L^\infty H^{N+2}} \left\| \frac{1}{t} (U_{\neq} \cdot \nabla_t U^1_{\neq})_0 \right\|_{L^1 H^{N+2}} \lesssim C_0 \epsilon \left\| \frac{1}{t} (U_{\neq} \cdot \nabla_t U^1_{\neq})_0 \right\|_{L^1 H^{N+2}}.
\end{align*} 
This last factor can, in turn, be estimated by
\begin{align*} 
\left\| \frac{1}{t} (U_{\neq} \cdot \nabla_t U^1_{\neq})_0 \right\|_{L^1 H^{N+2}} \lesssim \left\| \frac{1}{t} (U_{\neq} \cdot \nabla_t U^1_{\neq})_0 \right\|_{L^1 L^2} +  \left\| \Delta \frac{1}{t} (U_{\neq} \cdot \nabla_t U^1_{\neq})_0 \right\|_{L^1 H^N}.
\end{align*} 
The first term on the right-hand side is easily estimated (using that ${N-1} > 3/2$ for Sobolev embedding): 
\begin{align*}
\left\| \frac{1}{t} (U_{\neq} \cdot \nabla_t U^1_{\neq})_0 \right\|_{L^1 L^2} & \lesssim \norm{U_{\neq}}_{L^\infty H^{N-1}} \norm{\grad_L U^1_{\neq}}_{L^2 H^{N-1}} \lesssim \epsilon^2 \nu^{-1/2}. 
\end{align*}
 For the second term, we use that, for any function $f$, $\Delta f_0 = (\Delta_L f)_0$ as well as the identity $(U^1_{\neq} \partial_X  U^1_{\neq})_0=0$ (which was used in \S\ref{sec:decayg} above as well) to obtain that
\begin{align*}
& \left\| \Delta \frac{1}{t} (U_{\neq} \cdot \nabla_t U^1_{\neq})_0 \right\|_{L^1 H^N} = \left\|  \frac{1}{t} (\Delta_L [U_{\neq}^{2,3} \cdot \nabla_t U^1_{\neq}])_0 \right\|_{L^1 H^N} \\
& \lesssim \left\| \frac{1}{t} ( (\Delta_L U_{\neq}^{2,3}) \cdot \nabla_t U^1_{\neq})_0 \right\|_{L^1 H^N} + \left\| \frac{1}{t} ( U_{\neq}^{2,3} \cdot \Delta_L \nabla_t U^1_{\neq})_0 \right\|_{L^1 H^N} + \left\| \frac{1}{t} ( (\nabla_L U_{\neq}^{2,3}) \cdot \nabla_L \nabla_t U^1_{\neq})_0 \right\|_{L^1 H^N} \\
& \lesssim \| \Delta_L U_{\neq}^{2,3} \|_{L^\infty H^N} \|  \nabla_L U^1_{\neq} \|_{L^2 H^N} + \| U_{\neq}^{2,3} \|_{L^\infty H^N} \| \nabla_L \Delta_L U^1_{\neq} \|_{L^2 H^N} + \| \nabla_L U_{\neq}^{2,3} \|_{L^2 H^N} \| \Delta_L U^1_{\neq} \|_{L^\infty H^N} \\
& \lesssim \epsilon^2 \nu^{-1-1/2} = \epsilon^2 \nu^{-3/2},
\end{align*}
where in the last line we used \eqref{mlowbound} and Lemma \ref{lem:BasicApriori}. 
This completes the improvement of \eqref{boundg1} for $\epsilon \nu^{-3/2} \ll 1$ (note the sharp use of the hypotheses).   

\section{Zero frequency velocity estimates} 

The purpose of these estimates are to deduce low frequency controls on the velocity. 
First, observe that by the discussion in \S\ref{sec:coordstuff}, it suffices to prove these estimates on $u_0^i$, rather than $U_0^i$. 
Indeed, due to Lemma \ref{lem:SobComp} and the estimate $\norm{C}_{L^\infty H^{N+2}} \lesssim \epsilon \nu^{-1}$, for $\epsilon \nu^{-1} \ll 1$, we may move from one coordinate system to another, in particular
\begin{subequations} \label{ineq:Ui0}  
\begin{align}
\| U^i_0 \|_{H^s} & \approx \|u^i_0 \|_{H^s} \\  
\norm{U^i_{\neq}}_{H^s} & \approx \norm{\bar{u}^i_{\neq}}_{H^s};  
\end{align}
\end{subequations} 
recall the definition of $\bar{u}^i$ from \S\ref{sec:coordstuff}.

\subsection{Decay of $U_0^2$}
In this section, we improve the estimate~\eqref{boundU02decay}.
First, due to the divergence-free condition, $\widehat u^2_0 (k=0,\eta,l=0) = 0$, thus $Q u^2_0 = u_0^2$, where $Q$ projects on the Fourier modes for which $k$ or $l \neq 0$. 
Therefore, $u^2_0$ solves
\begin{align*}
\partial_t u_0^2 - \nu \Delta u_0^2 & = - Q (u \cdot \nabla u^2)_0 + Q \partial_y \Delta^{-1} (\partial_i u^j \partial_j u^i)_0 \\
& = - Q (u_0 \cdot \nabla u^2_0) + Q(\partial_y \Delta^{-1} (\partial_i u^j_0 \partial_j u^i_0)) - Q  (u_{\neq} \cdot \nabla u^2_{\neq})_0  + Q \partial_y \Delta^{-1} (\partial_i u^j_{\neq} \partial_j u^i_{\neq})_0 \\
& = Q T_0 + Q P_0 + Q T_{\neq} + QP_{\neq}.
\end{align*}
with data $(u^2_{in})_0$. 
Duhamel's formulation then reads
\begin{align*} 
u^2_0 = e^{\nu t \Delta} (u^2_{in})_0 + \int_0^t e^{\nu (t-s)\Delta} (Q T_0(s) + QP_0(s) + QT_{\neq}(s) + Q P_{\neq}(s))\,ds = I + II + III + IV + V.
\end{align*} 
Due to the spectral gap made possible via $Q$, there holds 
\begin{align} 
\| e^{\nu t \Delta} Q f \|_{L^2} \lesssim e^{-\nu t} \| f \|_{L^2} \quad \mbox{and} \quad \| e^{\nu t \Delta} \nabla Q f \|_{L^2} \lesssim \frac{1}{\sqrt {\nu t}} e^{-\nu t} \| f \|_{L^2}, \label{ineq:Qdeltdec}
\end{align} 
so that
\begin{align*}
& \left\| \int_0^t e^{\nu (t-s) \Delta} Q F(s) \,ds \right\|_{L^1 L^2} \lesssim \frac{1}{\nu} \| F \|_{L^1 L^2} \\
& \left\| \int_0^t e^{\nu (t-s) \Delta} \nabla Q F(s) \,ds \right\|_{L^1 L^2} \lesssim \frac{1}{\nu} \| F \|_{L^1 L^2}.
\end{align*}
Therefore, one obtains immediately
\begin{align*}
& \| I \|_{L^1  L^2} \lesssim \frac{1}{\nu} \| u_{in} \|_{L^2} \lesssim \frac{\epsilon}{\nu}.
\end{align*} 
Next, by the divergence-free condition on $u$ and Sobolev embedding,
\begin{align*}
\| II \|_{L^1  L^2} & = \left\| \int_0^t e^{\nu (t-s) \Delta} Q [ \partial_y (u^2_0)^2 + \partial_z (u^2_0 u^3_0) ]\,ds \right\|_{L^1  L^2}  \lesssim \frac{1}{\nu} \left[ \| (u_0^2)^2 \|_{L^1  L^2} + \| u_0^2 u_0^3 \|_{L^1  L^2} \right] \\
& \lesssim  \frac{1}{\nu} \| u_0^2 \|_{L^1  L^2} \left[ \| u_0^2 \|_{L^\infty H^{N-1}} + \| u_0^3 \|_{L^\infty  H^{N-1}} \right] \lesssim \frac{\epsilon}{\nu}  \| u_0^2 \|_{L^1  L^2}, 
\end{align*}
which is sufficient for $\epsilon \nu^{-1} \ll 1$. 
Similarly, we claim that the same bound holds for $III$:
$$
\| III \|_{L^1  L^2} \lesssim \frac{\epsilon}{\nu}  \| u_0^2 \|_{L^1  L^2}.
$$
Indeed, let us look at $QP_0$, which, since $u$ is divergence-free, can be written $Q \partial_{y i j} \Delta^{-1} (u_0^j u^i_0)$. If $i$ or $j$ is equal to 2, then the same proof as for $II$ applies. If both $i$ and $j$ are equal to 3, use the divergence free condition on $u$, namely $\partial_z u^3_0 = - \partial_y u^2_0$ to reduce matters to the previous case. 

Next turn to estimates $IV$ and $V$. Due to the zero mode projection and the divergence free constraint, first note 
\begin{align*}
(u_{\neq} \cdot \nabla u^2_{\neq})_0  = \left(\grad \cdot (u_{\neq} u^2_{\neq}) \right)_0 = \left(\partial_y(\bar{u}_{\neq}^2 \bar{u}^2_{\neq}) \right)_0 + \left(\partial_z(\bar{u}_{\neq}^3 \bar{u}^2_{\neq}) \right)_0. 
\end{align*}
Therefore, by \eqref{ineq:Qdeltdec} we have 
\begin{align*}
\norm{IV}_{L^1 L^2} & \lesssim \nu^{-1}\left(\norm{\bar{u}^2_{\neq}}^2_{L^1L^4} + \norm{\bar{u}^3\bar{u}^2_{\neq}}_{L^1L^2}\right) \\ 
& \lesssim \nu^{-1}\norm{\bar{u}^2_{\neq}}_{L^1L^2}\left(\norm{\bar{u}^2_{\neq}}_{L^\infty H^{N-1}} + \norm{\bar{u}^3_{\neq}}_{L^\infty H^{N-1}}\right) \\ 
& \lesssim \nu^{-1}\epsilon^2, 
\end{align*}
where the last line followed from \eqref{ineq:Ui0} and \eqref{ineq:tdecay} -- note the use of the inviscid damping on $\bar{u}^2_{\neq}$. 
We may apply a similar treatment for $V$, indeed, by the zero mode projection and the divergence free constraint, 
\begin{align*}
\partial_y \Delta^{-1} (\partial_i u^j_{\neq} \partial_j u^i_{\neq})_0 & = \partial_y\Delta^{-1} (\partial_i \partial_j \left(u^j_{\neq} u^i_{\neq}\right))_0 = \partial_y\Delta^{-1}\left(\partial_{yy}\left(\bar{u}^2_{\neq} \bar{u}^2_{\neq}\right) + 2\partial_{yz}\left(\bar{u}^2_{\neq} \bar{u}^3_{\neq}\right) + \partial_{zz}\left(\bar{u}^3_{\neq} \bar{u}^3_{\neq}\right)\right)_0. 
\end{align*}
By \eqref{ineq:Qdeltdec} we have 
\begin{align*}
\norm{V}_{L^1 L^2} & \lesssim \nu^{-1}\norm{\bar{u}^{i}_{\neq} \bar{u}^j_{\neq}}_{L^1 H^1} \mathbf{1}_{i\neq 1}\mathbf{1}_{j\neq 1} \lesssim  \nu^{-1}\norm{\bar{u}^{i}_{\neq}}_{L^2 H^{N-1}} \norm{\bar{u}^j_{\neq}}_{L^2 H^{N-1}} \mathbf{1}_{i\neq 1}\mathbf{1}_{j\neq 1} \lesssim \nu^{-4/3} \epsilon^2,  
\end{align*}
which is sufficient for $\epsilon \nu^{-4/3} \ll 1$. 

Gathering all the above estimates, we obtain that, for a constant $K$,
$$
\| u^2_0 \|_{L^1  L^2} \leq K  \frac{\epsilon^2}{\nu^{4/3}} + K \frac{\epsilon}{\nu}  \| u_0^2 \|_{L^1  L^2},
$$
which, by \eqref{ineq:Ui0}, improves \eqref{boundU02decay} for $\epsilon \nu^{-3/2} = \delta$ sufficiently small.  

\subsection{Uniform bound on $U_0^1$} \label{sec:U01low}
As discussed above, it suffices to consider the velocity in the original coordinates, $u_0^1$, which solves
$$
\partial_t u^1_0 - \nu \Delta u^1_0  = -u^2_0 - (u\cdot \nabla u^1)_0.
$$
An energy estimate gives
\begin{align*}
\frac{1}{2} \|  u^1_0(t)\|_{H^{N-1}}^2 + \nu \|\nabla u^1_0\|_{L^2  H^{N-1}}^2 & = \frac{1}{2} \|  (u^1_{in})_0\|_{H^{N-1}}^2 - \int_0^T \int \langle D \rangle^{N-1} u^1_0 \jap{D}^{N-1} \left[ u^2_0 - (u\cdot \nabla_t u^1)_0 \right] \,dV\,dt \\
& = \frac{1}{2} \|  (u^1_{in})_0\|_{H^{N-1}}^2 + LU + \mathcal{T}.
\end{align*}
To estimate the lift up term, use that $u^2_0$ always has a nonzero $z$ frequency by incompressibility together with the algebra property of $H^{N-1}$ to obtain
\begin{align*}
LU & \leq \| \nabla u_0^1 \|_{L^2  H^{N-1}} \| \nabla u_0^2 \|_{L^2  H^{N-1}} \lesssim \frac{C_0 \epsilon}{\nu^{3/2}} \frac{\epsilon}{\sqrt{\nu}} =  \frac{1}{C_0} \left( \frac{C_0 \epsilon}{\nu} \right)^2, 
\end{align*}
which suffices for $C_0$ sufficiently large. 
Split the transport term into the contribution of zero and non-zero frequencies (in $X$):
\begin{align*}
\mathcal{T} & =  \int_0^T \int  \langle D \rangle^{N-1} u^1_0 \langle D \rangle^{N-1} \left[ u_0^2 \partial_y u^1_0 + u_0^3 \partial_z u^1_0 + (u_{\neq} \cdot \nabla u^1_{\neq})_0 \right]\,dV\,ds \\
& = \mathcal{T}_0 + \mathcal{T}_{\neq}.
\end{align*}
To estimate $\mathcal{T}_0$, consider first the term involving (roughly speaking) $u_0^1 u_0^2 \partial_y u^1_0 $; to bound it, we will use again that the $z$ frequency of $u_0^2$ cannot be zero. Consider next the term involving $u^1_0 u_0^3 \partial_z u^1_0$; to bound it, we will use that at least two of the factors $u^1_0$, $u^3_0$, and $\partial_z u^1_0$ must have nonzero $z$ frequency. This leads to the estimate
\begin{align*}
\mathcal{T}_0 &\lesssim \| u^1_0 \|_{L^\infty  H^{N-1}} \| \nabla u_0^2 \|_{L^2  H^{N-1}} \| \nabla u^1_0 \|_{L^2  H^{N-1}} + \| u^1_0 \|_{L^\infty  H^{N-1}} \| \nabla  u_0^3 \|_{L^2 H^{N-1}} \| \nabla u^1_0 \|_{L^2  H^{N-1}} \\
& \qquad \qquad \qquad +  \| \nabla  u^1_0 \|_{L^\infty  H^{N-1}} \| u_0^3 \|_{L^2  H^{N-1}} \| \nabla u^1_0 \|_{L^2  H^{N-1}} \\
& \lesssim \frac{C_0 \epsilon}{\nu} \frac{C_0 \epsilon}{\sqrt{\nu}} \frac{C_0 \epsilon}{{\nu^{3/2}}} \leq \frac{C_0 \epsilon}{\nu} \left( \frac{C_0 \epsilon}{\nu} \right)^2,
\end{align*}
which suffices for $\epsilon \nu^{-1}$ sufficiently small. 

To estimate $\mathcal{T}_{\neq}$ we use the projection to zero frequency to note 
\begin{align*}
(u_{\neq} \cdot \nabla u^1_{\neq})_0 = (\bar{u}_{\neq}^2 \cdot (\partial_y - t\partial_y u_0^1\partial_x)\bar{u}^1_{\neq})_0 + (\bar{u}_{\neq}^3 \cdot \partial_z\bar{u}^1_{\neq})_0,
\end{align*}
(note that the $\bar{u}^1 \partial_X \bar{u}^1$ is eliminated), which implies (using also $N-1 > 3/2$), 
\begin{align*} 
\mathcal{T}_{\neq} \lesssim \| u_0^1 \|_{L^\infty  H^{N-1}} \| \bar{u}^{2,3}_{\neq} \|_{L^2  H^{N-1}} \|  \nabla_L \bar{u}^1_{\neq} \|_{L^2  H^{N-1}} \lesssim \frac{C_0 \epsilon}{\nu}  \frac{C_0 \epsilon}{\nu^{1/6}}  \frac{C_0 \epsilon}{\sqrt \nu} \leq C_0 \epsilon \nu^{1/3} \left( \frac{C_0 \epsilon}{\nu} \right)^2, 
\end{align*} 
which suffices for $\epsilon$ sufficiently small. 
This completes the energy estimate on $u_0^1$. 

\subsection{Short time estimate on $U_0^1$}
We also need to deduce \eqref{boundU10short}. 
For this, we combine the techniques of \S\ref{sec:ShortQ01} combined with the methods applied in \S\ref{sec:U01low}.
We omit the treatment for brevity as the details follow analogously (note the main change from \S\ref{sec:U01low} is the way the lift-up effect is treated). 

\subsection{Uniform bound on $U_0^2$}
In this section we improve the bound \eqref{boundU2}. 
As discussed above, we may perform estimates on $u_0^2$ rather than $U^2_0$.
In the original coordinates, $u_0^2$ solves the equation 
$$
\partial_t u_0^2 - \nu \Delta u_0^2 = - (u \cdot \nabla u^2)_0 + \partial_y \Delta^{-1} (\partial_i u^j \partial_j u^i)_0.
$$
An energy estimate gives
\begin{align*}
& \frac{1}{2} \| u_0^2(T) \|_{H^{N-1}}^2 + \nu \| \nabla_L u_0^2 \|_{L^2  H^{N-1}}^2 \\
& \qquad \qquad = \frac{1}{2} \| (u_{in}^2)_0 \|_{H^{N-1}}^2 + \int_0^T \int \langle D \rangle^{N-1} u_0^2 \langle D \rangle^{N-1} \left[ - (u \cdot \nabla u^2)_0 + \partial_y  \Delta^{-1} (\partial_i u^j \partial_j u^i)_0 \right]\,dV\,dt\\
& \qquad \qquad =  \frac{1}{2} \| (u_{in}^2)_0 \|_{H^{N-1}}^2 + \mathcal{T} + NLP.
\end{align*}
The transport term $\mathcal{T}$ can be treated as for $u_0^1$ in \S\ref{sec:U01low}; we omit the details. Turning to the nonlinear pressure term, it can be written, using that $u$ is divergence free, as
\begin{align*}
NLP & =  \int_0^T \int  \langle D \rangle^{N-1} u_0^2 \langle D \rangle^{N-1}  \partial_y  \Delta^{-1} \left[ \partial_i (u^j_0 \partial_j u^i_0) + \partial_i( u^j_{\neq} \partial_j u^i_{\neq} )_0 \right] \\
& = NLP_0 + NLP_{\neq}.
\end{align*}
In order to bound $NLP_{\neq}$, we use once again the remark that, due to the $X$ average,
\begin{align*}
(\partial_i u^j_{\neq} \partial_j u^i_{\neq} )_0 = \partial_{ij}(\bar{u}^i \bar{u}^j)_0 \mathbf{1}_{i \neq 1} \mathbf{1}_{j \neq 1}.  
\end{align*} 
Therefore, 
\begin{align*} 
NLP_{\neq} \lesssim \| u^2_0 \|_{L^\infty  H^{N-1}} \| \bar{u}_{\neq}^{2,3} \|_{L^2  H^{N-1}} \| \nabla_L \bar{u}^{2,3}_{\neq} \|_{L^2  H^{N-1}} \lesssim \epsilon^3 \nu^{-2/3}. 
\end{align*}
Since $i$ and $j$ can only be equal to $2$ or $3$, $NLP_0$ can be estimated by
\begin{align*}
NLP_0 & \lesssim \| u^2_0 \|_{L^2  H^{N-1}} \left( \| u^2_0 \|_{L^\infty  H^{N-1}} +  \| u^3_0 \|_{L^\infty  H^{N-1}} \right) \left( \|\nabla u^2_0 \|_{L^2  H^{N-1}} +  \|\nabla u^3_0 \|_{L^2  H^{N-1}} \right) \\
& \lesssim \epsilon^3 \nu^{-1}.  
\end{align*}
This gives the desired bound on $\| u_0^2 \|_{L^\infty  H^{N-1}}^2 + \nu \| \nabla_L u_0^2 \|_{L^2  H^{N-1}}^2$ for $\epsilon \nu^{-1}$ sufficiently small. 

\subsection{Uniform bound on $U^3_0$} 
As already explained above, we perform estimates on $u^3_0$, which solves
$$
\partial_t u_0^3 - \nu \Delta u_0^3 = - (u \cdot \nabla u^3)_0 + \partial_y  \Delta^{-1} (\partial_i u^j \partial_j u^i)_0
$$
An energy estimate gives
\begin{align*}
& \frac{1}{2} \| u_0^3(T) \|_{H^{N-1}}^2 + \nu \| \nabla u_0^3 \|_{L^2  H^{N-1}}^2 \\
& \qquad \qquad = \frac{1}{2} \| (u_{in}^3)_0 \|_{H^{N-1}}^2 + \int_0^T \int \langle D \rangle^{N-1} u_0^2 \langle D \rangle^{N-1} \left[ - (u \cdot \nabla u^3)_0 + \partial_z  \Delta^{-1}(\partial_i u^j \partial_j u^i)_0 \right]\,dV\,dt\\
& \qquad \qquad = \frac{1}{2} \| (u_{in}^3)_0 \|_{H^{N-1}}^2 + \mathcal{T} + NLP.
\end{align*}
The estimate on $\mathcal{T}$ is similar to that done on $u_0^1$ and hence is omitted for brevity. The estimate on $NLP$ requires a slight variant of what is done for $u_0^2$. 
First, 
\begin{align*}
NLP & =  \int_0^T \int  \langle D \rangle^{N-1} u_0^3 \langle D \rangle^{N-1}  \partial_z \partial_i \Delta^{-1} \left[ (u^j_0 \partial_j u^i_0) + ( u^j_{\neq} \partial_j u^i_{\neq} )_0 \right] \\
& = NLP_0 + NLP_{\neq}.
\end{align*}
The treatment of $NLP_{\neq}$ is the same as for $u_0^2$ and is hence omitted. 
Turn next to $NLP_0$. If $i = j = 3$, then at least two of the three factors must have a non-zero $z$ derivative and hence we have 
\begin{align*}
NLP_0 & \lesssim \norm{u_0^3}_{L^\infty H^{N-1}} \norm{\grad u_0^3}_{L^2 H^{N-1}}^2 + \norm{u_0^3}_{L^\infty H^{N-1}}\norm{u_0^2}_{L^2 L^{N-1}}\norm{\grad u_0^3}_{L^2 L^{N-1}} \\ & \quad + \norm{u_0^3}_{L^\infty H^{N-1}}\norm{u_0^2}_{L^2 H^{N-1}}\norm{\grad u_0^2}_{L^2 L^{N-1}} \\ 
& \lesssim \frac{\epsilon}{\nu} \epsilon^{2}, 
\end{align*} 
which suffices for $\epsilon \nu^{-1}$ sufficiently small. Notice that we used $\norm{u_0^2}_{L^2 H^{N-1}} \lesssim \nu^{-1/2}$; one way to deduce this is via incompressibility, $\norm{u_0^2}_{L^2 H^{N-1}} \leq \norm{\partial_z u_0^2}_{L^2 H^{N-1}}$. 
This completes all of the zero frequency velocity estimates. 

\section*{Acknowledgments}
The authors would like to thank Peter Constantin, Nick Trefethen, and Vlad Vicol for helpful discussions. The authors would like to especially thank Tej Ghoul for encouraging us to focus our attention on finite Reynolds number questions. The work of JB was in part supported by a Sloan fellowship and NSF grant DMS-1413177, the work of PG was in part supported by a Sloan fellowship and the NSF grant DMS-1101269, while the work of NM was in part supported by the NSF grant DMS-1211806. 

\appendix

\section{Commutation and elliptic estimates}

\label{sectionappendix}
\subsection{Commutator-like estimates}
In this section we outline some technical pointwise estimates on the Fourier multipliers we are employing; these essentially become product-rule type estimates in practice. 

\begin{lemma}[Commutator-type estimate on $m$] \label{lem:mcomm}  
For all $t,k,l,\eta,\xi$ there holds 
\begin{align*}
m(t,k,\eta,l) & \lesssim \jap{\eta-\xi,l-l^\prime}^2 m(t,k,\xi,l^\prime). 
\end{align*}
\end{lemma}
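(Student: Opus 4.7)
The plan is as follows. First, when $k=0$ the multiplier reduces to $m(t,0,\cdot,\cdot) \equiv 1$, so both sides equal $1$ and the inequality is trivial. For $k \neq 0$, I would use the symmetry $m(t,k,\eta,l) = m(t,-k,-\eta,l)$ (immediate from the defining ODE, together with the fact that $(\eta-kt)^2$ and $\eta/k$ are invariant under $(k,\eta)\to(-k,-\eta)$) to reduce to $k \geq 1$, invoking the fact that $k$ is an integer frequency. The inequality $k^2 \geq 1$ will be used repeatedly.

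The core of the argument is to rewrite $m(t,k,\eta,l) \asymp (1+\phi(t,k,\eta,l))^{-1}$, where $\phi \geq 0$ is extracted from the explicit formulas in \S\ref{sec:FM}: for example, $\phi = 0$ in the trivial cases (1, 2, 4a); $\phi = (\eta-kt)^2/(k^2+l^2)$ in case 4b; $\phi = (1000k\nu^{-1/3})^2/(k^2+l^2)$ in case 4c; and analogous piecewise-quadratic expressions arising from $(k^2+(\eta-kt)^2+l^2)/(k^2+\eta^2+l^2)$ and its frozen version in cases 3a and 3b. The target inequality is then equivalent to
\[
\phi(t,k,\xi,l') \lesssim \jap{\eta-\xi,l-l'}^2 \bigl(1 + \phi(t,k,\eta,l)\bigr),
\]
which I would verify by case analysis on the regimes of $(\eta,l)$ and $(\xi,l')$, using the elementary perturbation bounds
\[
(\xi-kt)^2 \leq 2(\eta-kt)^2 + 2(\eta-\xi)^2, \quad l'^2 \leq 2 l^2 + 2(l-l')^2, \quad \xi^2 \leq 2\eta^2 + 2(\eta-\xi)^2,
\]
together with the comparability $k^2+l'^2 \leq 2\jap{l-l'}^2(k^2+l^2)$ (using $k^2 \geq 1$). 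The crucial algebraic point that yields the power $2$ rather than $4$ is that the errors from the two perturbations should be kept \emph{summed} rather than multiplied: one has $\jap{\eta-\xi}^2 + \jap{l-l'}^2 \leq 2\jap{\eta-\xi,l-l'}^2$, whereas naively bounding the numerator and denominator of $m/m'$ by separate applications of the perturbation estimates would produce the weaker $\jap{\eta-\xi,l-l'}^4$.

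The main obstacle will be the ``cross'' cases where $(\eta,l)$ sits in a trivial regime (so $\phi(\eta,l)=0$) while $(\xi,l')$ does not; there one must show $\phi(t,k,\xi,l') \lesssim \jap{\eta-\xi,l-l'}^2$ directly. The key observation is that the boundaries between regimes are separated by distances of order $|k|\nu^{-1/3}$ (or of order $|\eta|$ in case 3), while $\phi(\xi,l')$ is at most of order (the corresponding shift)$^2/(k^2+l'^2)$. Hence whenever $(\eta,l)$ and $(\xi,l')$ lie in different regimes, $|\eta-\xi|$ is automatically at least of that shift-scale, and combined with $k^2 \geq 1$ one obtains $\phi(\xi,l') \leq |\eta-\xi|^2 \leq \jap{\eta-\xi,l-l'}^2$; crucially, this absorption works without any appeal to smallness of $\nu$, so the implicit constant in the lemma is $\nu$-independent, as required for its use throughout the paper.
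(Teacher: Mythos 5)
Your proposal is correct and follows essentially the same route as the paper's proof: both arguments reduce to a case-by-case comparison of the explicit piecewise formulas for $m$ (your $1/m=1+\phi$ form is just the reciprocal of the paper's ratio $m/m'$), use quasi-triangle inequalities in $(\eta,l)$ together with $|k|\ge 1$ while keeping the error terms additive so as to land on the power $2$, and handle the "cross'' cases by observing that membership in different $t$-regimes forces $|\eta-\xi|\gtrsim |k|\nu^{-1/3}$, which absorbs the potential $\nu^{-2/3}$ loss. One small caution: your blanket statement that lying in different regimes makes $|\eta-\xi|$ at least the shift scale is not literally true for adjacent regimes sharing a boundary; it holds in exactly the form you need it (when $\phi(t,k,\eta,l)=0$), and in the case where $-1000\nu^{-1/3}<\xi/k<0$ one must also use the $\xi^2$ appearing in the denominator of $\phi$ to absorb part of the numerator --- precisely the sort of bookkeeping that the paper's three representative cases are meant to illustrate.
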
 
\begin{proof} 
Clearly it suffices to show that
$$
\frac{m(t,k,\eta,l)}{m(t,k,\eta',l')} \lesssim \langle l-l' \rangle^2 +  \langle \eta-\eta' \rangle^2.
$$
Due to the definition of $m$, this estimate is proved by distinguishing several cases, depending on how $t$, $\frac{\eta}{k}$, and $\frac{\eta'}{k}$ compare. Since all these cases are fairly similar, we will only consider three of them for brevity:
\begin{itemize}
\item  If $\frac{\eta}{k} >0$, $\frac{\eta'}{k} >0$, $t > \frac{\eta}{k} + 1000 \nu^{-1/3}$ and $t > \frac{\eta'}{k} + 1000 \nu^{-1/3}$, 
\begin{align*}
\frac{m(t,k,\eta,l)}{m(t,k,\eta',l')} = \frac{k^2 + l^2}{k^2 + (l')^2} \frac{k^2 + (1000 \nu^{-1/3} k)^2 + (l')^2}{k^2 + (1000 \nu^{-1/3} k)^2 + l^2} \lesssim \frac{1 + L^2}{1+ (L')^2} \frac{\nu^{-2/3} + (L')^2}{\nu^{-2/3} + L^2},
\end{align*}
where we set $L = \frac{l}{k}$ and $L' = \frac{l'}{k}$. Since
$$
\frac{1 + L^2}{1+ (L')^2} \frac{\nu^{-2/3} + (L')^2}{\nu^{-2/3} + L^2} - 1 \lesssim \frac{\nu^{-2/3}(L^2 - (L')^2)}{\langle L' \rangle^2  (\nu^{-2/3} + L^2)},
$$
we deduce the desired bound.
\item If $\frac{\eta}{k} >0$, $\frac{\eta'}{k} >0$, and $t < \frac{\eta}{k}$ and $t > \frac{\eta'}{k} + 1000 \nu^{-1/3}$,
$$
\frac{m(t,k,\eta,l)}{m(t,k,\eta',l')} \lesssim \frac{\nu^{-2/3}+(L')^2}{1 + (L')^2} \lesssim 1 + \nu^{-2/3} \lesssim \langle \eta - \eta' \rangle^2.
$$
\item  If $0 < \frac{\eta}{k} < t < \frac{\eta}{k} + 1000 \nu^{-1/3}$ and $0 < \frac{\eta'}{k} < t < \frac{\eta'}{k} + 1000 \nu^{-1/3}$,
$$
\frac{m(t,k,\eta,l)}{m(t,k,\eta',l')} = \frac{(1+L^2)(1 + (t-H')^2 + (L')^2)}{(1 + (t-H)^2 + L^2)(1 + (L')^2)},
$$
where we set $L = \frac{l}{k}$, $L' = \frac{l'}{k}$, $H = \frac{\eta}{k}$, and $H' = \frac{\eta'}{k}$. Since
\begin{align*}
&\frac{(1+L^2)(1 + (t-H')^2 + (L')^2)}{(1 + (t-H)^2 + L^2)(1 + (L')^2)} - 1  = \frac{(t-H)^2( L^2 - (L')^2) + L^2 (2t - H - H')(H-H')}{(1 + (t-H)^2 + L^2)(1 + (L')^2)} \\
& \qquad \qquad \lesssim \frac{|(L')^2 - L^2|}{1 + (L')^2} + \frac{L^2 |t-H||H-H'|}{(1 + (t-H)^2 + L^2)(1 + (L')^2)} + \frac{L^2 |H-H'|^2}{(1 + (t-H)^2 + L^2)(1 + (L')^2)} \\
& \qquad \qquad \lesssim \langle L - L' \rangle^2 + \langle H - H' \rangle^2,
\end{align*}
the desired bound follows.
\end{itemize}
\end{proof}

\begin{lemma}[Commutator-type estimate on $\sqrt{-\dot{M} M}$] \label{lem:dotMcomm}  
For all $t,k,l,l',\eta,\eta'$ there holds the following estimates
\begin{subequations} 
\begin{align}
\sqrt{-\dot{M}^0 M^0}(t,k,\eta,l) & \lesssim \jap{\eta-\eta',l-l'} \sqrt{-\dot{M}^0 M^0}(t,k,\eta',l^\prime) \\ 
\sqrt{-\dot{M}^1 M^1}(t,k,\eta,l) & \lesssim \jap{\eta-\eta',l-l'}^{3/2} \sqrt{-\dot{M}^1 M^1}(t,k,\eta',l^\prime) \\ 
\sqrt{-\dot{M}^2 M^2}(t,k,\eta,l) & \lesssim \jap{\nu^{1/3}\abs{\eta-\eta'}}^{(1+\kappa)/2} \sqrt{-\dot{M}^2 M^2}(t,k,\eta',l^\prime). 
\end{align}
\end{subequations} 
\end{lemma}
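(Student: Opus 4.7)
The plan is to prove each bound pointwise. Since $c\le M^i(t,k,\eta,l)\le 1$ uniformly (as noted just after the definition of the $M^i$), one has $-\dot M^i M^i\asymp -\dot M^i/M^i$, and it suffices to control the ratio of $-\dot M^i/M^i$ at $(t,k,\eta,l)$ versus $(t,k,\eta',l')$. The case $k=0$ is trivial because both sides vanish, so we may assume $|k|\ge 1$; setting $D(k,\eta,l):=k^2+l^2+(\eta-kt)^2$, the lower bound $D\ge k^2\ge 1$ will be used repeatedly.

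The estimate for $M^0$ is the simplest. The squared ratio equals exactly $D(k,\eta',l')/D(k,\eta,l)$, and the elementary inequalities $(l')^2\le 2l^2+2(l-l')^2$ and $(\eta'-kt)^2\le 2(\eta-kt)^2+2(\eta-\eta')^2$, combined with $D\ge 1$, yield $D(k,\eta',l')\lesssim D(k,\eta,l)\,\jap{\eta-\eta',l-l'}^2$. The estimate for $M^2$ is analogous: with $a:=\nu^{1/3}|t-\eta/k|$ and $a':=\nu^{1/3}|t-\eta'/k|$, the inequality $|a-a'|\le \nu^{1/3}|\eta-\eta'|$ (using $|k|\ge 1$) combined with $(a')^{1+\kappa}\lesssim a^{1+\kappa}+(\nu^{1/3}|\eta-\eta'|)^{1+\kappa}$, after dividing by $1+a^{1+\kappa}\ge 1$, produces the claimed exponent $(1+\kappa)/2$ upon taking square roots.

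The main obstacle is the $M^1$ estimate, because the squared ratio factors as
\begin{align*}
\frac{\jap{kl}}{\jap{kl'}}\cdot\frac{D(k,\eta',l')}{D(k,\eta,l)},
\end{align*}
and while the second factor is $\lesssim \jap{\eta-\eta',l-l'}^2$ as above, the first can a priori grow with $|k|$, which is not controlled by $\jap{l-l'}$ alone. The key mechanism is that this growth is absorbed by the denominator: using $\jap{kl}^2\le 2\jap{kl'}^2+2k^2(l-l')^2$ together with the lower bound $D(k,\eta,l)\ge k^2$ shows that the surplus $k^2(l-l')^2$ is cancelled by one power of $k^2$ from $D$, at the cost of only an additional factor $\jap{l-l'}^2$. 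The plan is to formalise this cancellation by a short case analysis in the spirit of the proof of Lemma \ref{lem:mcomm} (splitting according to the relative sizes of $|kl'|$, $|l-l'|$, and $|\eta-\eta'|$) and conclude
\begin{align*}
\frac{\jap{kl}}{\jap{kl'}}\cdot\frac{D(k,\eta',l')}{D(k,\eta,l)}\;\lesssim\;\jap{\eta-\eta',l-l'}^3,
\end{align*}
whose square root is the required bound. The exponent $3/2$ then reflects exactly this one-half power loss relative to the $M^0$ estimate, and is consistent with the role of $M^1$ as the multiplier absorbing the $\jap{kl}$ factor from the linear pressure terms.
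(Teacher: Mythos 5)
Your arguments for $M^0$ and $M^2$ are correct and complete (the paper itself gives no details for this lemma, asserting only that the bounds follow from the definitions), and the reduction $-\dot M^i M^i \asymp -\dot M^i/M^i$ via $c< M^i\le 1$ is the right starting point. The genuine gap is the $M^1$ bound, which you leave as a plan, and the plan cannot be carried out. The mechanism you propose double-counts the denominator: once you have used $D(k,\eta',l')\lesssim \jap{\eta-\eta',l-l'}^2 D(k,\eta,l)$ there is no spare power of $k^2$ in $D(k,\eta,l)$ left to absorb the surplus $k^2(l-l')^2$ coming from $\jap{kl}^2\le 2\jap{kl'}^2+2k^2(l-l')^2$; the offending term $\abs{k}\abs{l-l'}\,D(k,\eta',l')/\bigl(\jap{kl'}D(k,\eta,l)\bigr)$ is genuinely unbounded in $k$. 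In fact no case analysis can close this, because the $M^1$ inequality is false as stated for that multiplier in isolation: take $\eta'=\eta=k$, $t=1=\eta/k$, $l=1$, $l'=0$ and $\abs{k}$ large; then $-\dot M^1/M^1(t,k,\eta,1)=2\jap{k}/(k^2+1)$ while $-\dot M^1/M^1(t,k,\eta,0)=2/k^2$, so, using $c<M^1\le 1$, the left-hand side exceeds the right-hand side by a factor $\gtrsim\abs{k}^{1/2}$, whereas $\jap{\eta-\eta',l-l'}^{3/2}=2^{3/4}$ is fixed.

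What is true --- and all that is actually needed downstream, since Lemma \ref{lem:dotMPEL} only requires commuting the combined weight and one may first dominate $\sqrt{-\dot M^i M^i}\lesssim\sqrt{-\dot M M}$ pointwise --- is the analogous bound for the product $M=M^0M^1M^2$: since all $M^i\asymp 1$, one has $-\dot M M\asymp -\dot M^0/M^0-\dot M^1/M^1-\dot M^2/M^2$, and for $\abs{k}\ge 1$ the elementary equivalence $k^2+2\jap{kl}\asymp \abs{k}(\abs{k}+\abs{l})$ shows that the $M^0$ and $M^1$ contributions combine to $\asymp \abs{k}(\abs{k}+\abs{l})/D(k,\eta,l)$. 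Then $(\abs{k}+\abs{l})/(\abs{k}+\abs{l'})\le 1+\abs{l-l'}$ together with your comparison $D(k,\eta',l')\lesssim\jap{\eta-\eta',l-l'}^2D(k,\eta,l)$ gives a ratio $\lesssim\jap{\eta-\eta',l-l'}^3$, i.e.\ the exponent $3/2$ after taking square roots, and the $M^2$ piece is incorporated exactly as you treated it. So you should either prove the commutator estimate directly for $\sqrt{-\dot M M}$, or restate the $M^1$ inequality with $\sqrt{-\dot M^0M^0}+\sqrt{-\dot M^1M^1}$ appearing on both sides; as written, the second estimate of the lemma cannot be established by your argument (or any other), so the proposal has a real gap there.
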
 
\begin{proof} 
All of these estimates follow immediately from the definition of $M^i$ in \S\ref{sec:FM}. 
\end{proof} 

\subsection{Elliptic lemmas} \label{sec:PEL}
This section concerns estimates on $\Delta_t^{-1}$ involving the Fourier multipliers $m$, $\dot{M}$, and $\grad_L$.  
All of these estimates are based on comparing $\Delta_t^{-1}$ to $\Delta_L^{-1}$. 
The estimates here differ from the analogous estimates employed previously in \cite{BM13,BMV14,BGM15I,BGM15II} due to the much lower regularity and the fact that the coefficients are a little smaller here (relative to the primary unknowns).  

The first estimate concerns inverting $\Delta_t$ at zero $X$ frequencies. 
\begin{lemma}[Zero mode elliptic regularity] \label{lem:ZeroModePEL} 
Under the bootstrap hypotheses, for $\epsilon \nu^{-1}$ sufficiently small, there holds for any $1 < s \leq N$, 
\begin{subequations}
\begin{align}
\norm{\Delta_t^{-1} \phi_0}_{H^{s+2}} & \lesssim \norm{\phi_0}_{H^{s}} + \norm{\Delta_t^{-1} \phi_0}_{L^2} \label{ineq:ExLow0}\\ 
\norm{\grad \Delta_t^{-1} \phi_0}_{H^{s+1}} & \lesssim \norm{\grad \phi_0}_{H^{s-1}} + \norm{\grad \Delta_t^{-1}\phi_0}_{L^2}  \label{ineq:Grad0}\\ 
\norm{\Delta \Delta_t^{-1} \phi_0}_{H^s} & \lesssim \norm{\phi_0}_{H^s} + \epsilon\nu^{-1}\norm{\grad \Delta_t^{-1} \phi_0}_{L^2}. \label{ineq:0BasicPEL}
\end{align}
\end{subequations}
\end{lemma}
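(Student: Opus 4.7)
The starting point is the observation that at zero $X$ frequency, $\partial_X = 0$ so $\partial_Y^L = \partial_Y$, and the formulas from \S\ref{sec:coordchnge} reduce to
\begin{align*}
\Delta_t U_0 = \Delta U_0 + G\, \partial_{YY} U_0 + 2\psi_z\, \partial_{YZ} U_0 + (\Delta_t C)\, \partial_Y U_0,
\end{align*}
where $\Delta = \partial_{YY}+\partial_{ZZ}$ is the usual two-dimensional Laplacian. Setting $U_0 = \Delta_t^{-1}\phi_0$, rearranging yields
\begin{align*}
\Delta U_0 = \phi_0 - G\, \partial_{YY}U_0 - 2\psi_z\, \partial_{YZ} U_0 - (\Delta_t C)\, \partial_Y U_0,
\end{align*}
and the strategy is to treat this as a perturbed Poisson equation in two dimensions, where the coefficients $G, \psi_z, \Delta_t C$ are controlled by $\grad C$ via Lemma \ref{lem:CoefCtrl} and hence are small in all the relevant norms by \eqref{boundC}.

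I would first establish \eqref{ineq:ExLow0}. The standard two-dimensional elliptic estimate gives $\norm{U_0}_{H^{s+2}} \lesssim \norm{\Delta U_0}_{H^s} + \norm{U_0}_{L^2}$. Then each perturbative contribution is controlled by the 2.5D paraproduct estimate \eqref{LPprod2p5D} (applicable because $G,\psi_z,\Delta_tC$ only depend on $(Y,Z)$, so we may put the low-frequency coefficient in $H^{1+}$ rather than $H^{3/2+}$, a crucial point since $s$ is only assumed $>1$). For example,
\begin{align*}
\norm{G\,\partial_{YY}U_0}_{H^s} \lesssim \norm{G}_{H^{1+}}\norm{\partial_{YY}U_0}_{H^s} + \norm{G}_{H^s}\norm{\partial_{YY}U_0}_{H^{1+}}.
\end{align*}
By Lemma \ref{lem:CoefCtrl} and \eqref{boundC}, $\norm{G}_{H^{1+}} + \norm{\psi_z}_{H^{1+}} + \norm{\Delta_tC}_{H^{1+}} \lesssim \norm{\grad C}_{H^{2+}} \lesssim \epsilon\nu^{-1}$, so the first summand can be absorbed into $\norm{U_0}_{H^{s+2}}$ on the left side for $\epsilon\nu^{-1}$ small. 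The second summand is bounded by $\epsilon\nu^{-1}\norm{U_0}_{H^{s+2}}$ using $\norm{G}_{H^s}\lesssim \epsilon\nu^{-1}$ (for $s\leq N$) and a standard interpolation of $\norm{\partial_{YY}U_0}_{H^{1+}}$ between $\norm{U_0}_{L^2}$ and $\norm{U_0}_{H^{s+2}}$, again absorbable. Estimate \eqref{ineq:Grad0} follows by the identical argument applied one derivative below, starting from $\norm{\grad U_0}_{H^{s+1}} \lesssim \norm{\Delta U_0}_{H^{s-1}} + \norm{\grad U_0}_{L^2}$.

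The estimate \eqref{ineq:0BasicPEL} requires a more careful bookkeeping since the low-regularity error term is weighted only by $\epsilon\nu^{-1}$ (rather than absorbed into the strong norm). Here I would directly decompose each coefficient-derivative product via the paraproduct $fg = f_{Lo}g_{Hi} + f_{Hi}g_{Lo}$ and exploit the two-dimensional structure. For the low-high piece, use
\begin{align*}
\norm{G_{Lo}(\partial_{YY}U_0)_{Hi}}_{H^s} \lesssim \norm{G}_{H^{1+}}\norm{\partial_{YY}U_0}_{H^s} \lesssim \epsilon\nu^{-1}\norm{\Delta U_0}_{H^s},
\end{align*}
since $|\eta|^2\leq|\eta|^2+|\ell|^2$ pointwise in Fourier gives $\norm{\partial_{YY}U_0}_{H^s}\leq\norm{\Delta U_0}_{H^s}$; this is absorbed into the left side for $\epsilon\nu^{-1}$ small. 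For the high-low piece,
\begin{align*}
\norm{G_{Hi}(\partial_{YY}U_0)_{Lo}}_{H^s} \lesssim \norm{G}_{H^s}\norm{\partial_{YY}U_0}_{H^{1+}},
\end{align*}
and here I bound $\norm{\partial_{YY}U_0}_{H^{1+}} \lesssim \norm{\grad U_0}_{L^2}^{1-\theta}\norm{\Delta U_0}_{H^s}^\theta$ by interpolation, while $\norm{G}_{H^s}\lesssim\epsilon\nu^{-1}$. After Young's inequality, the contribution splits into a piece absorbed by $\norm{\Delta U_0}_{H^s}$ and a piece of the form $\epsilon\nu^{-1}\norm{\grad U_0}_{L^2}$, which is the desired right-hand side. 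The $\psi_z\,\partial_{YZ}U_0$ term is handled identically, and the $(\Delta_t C)\partial_Y U_0$ term is easier since only one derivative falls on $U_0$.

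The main technical obstacle is the interpolation-and-absorption accounting in the third estimate: one must be careful that the low-frequency residue always produces only $\norm{\grad U_0}_{L^2}$ (rather than $\norm{U_0}_{L^2}$ or $\norm{U_0}_{H^{s+2}}$), which is dictated by the precise structure of the differentiated coefficient factors and is why the $(\Delta_t C)\partial_Y U_0$ term, with its one-derivative loss on $U_0$, is permissible despite the two-derivative loss on $C$. The 2.5D paraproduct bound \eqref{LPprod2p5D} is what makes everything work at regularity just above $1$.
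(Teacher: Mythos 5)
Your overall strategy is the same as the paper's: write $\Delta U_0 = \phi_0 - G\partial_{YY}U_0 - 2\psi_z\partial_{YZ}U_0 - (\Delta_t C)\partial_Y U_0$ at zero $X$-frequency, treat it as a perturbed 2D Poisson problem with coefficients of size $\epsilon\nu^{-1}$ via Lemma \ref{lem:CoefCtrl} and the bootstrap, absorb the small terms, and isolate a low-frequency residue. The paper simply uses the two-dimensional algebra property (valid for $s>1$ since everything depends only on $(Y,Z)$) plus a $P_{\leq 1}/P_{>1}$ split of $\grad\Delta_t^{-1}\phi_0$, where you use the paraproduct \eqref{LPprod2p5D} plus interpolation and Young; these are equivalent in substance, and your treatments of \eqref{ineq:ExLow0} and \eqref{ineq:0BasicPEL} go through (your interpolation exponent exists precisely because $s>1$, matching the paper's use of the 2D threshold).

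There is, however, a genuine flaw in your one-line treatment of \eqref{ineq:Grad0}. The starting inequality you state, $\norm{\grad U_0}_{H^{s+1}} \lesssim \norm{\Delta U_0}_{H^{s-1}} + \norm{\grad U_0}_{L^2}$, is false: at high frequencies the left-hand side carries $s+2$ derivatives of $U_0$ while the right-hand side carries only $s+1$ (in Fourier, $\abs{\xi}\jap{\xi}^{s+1} \not\lesssim \abs{\xi}^2\jap{\xi}^{s-1} + \abs{\xi}$ as $\abs{\xi}\to\infty$). Moreover, even the repaired non-gradient version $\norm{\grad U_0}_{H^{s+1}}\lesssim \norm{\Delta U_0}_{H^{s}}+\norm{\grad U_0}_{L^2}$ would, after substituting the equation, only produce $\norm{\phi_0}_{H^s}$ on the right, which does not imply the stated bound with $\norm{\grad\phi_0}_{H^{s-1}}$ (the latter is strictly weaker at low frequencies of $\phi_0$). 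The correct route is to keep the gradient structure: start from the true estimate $\norm{\grad U_0}_{H^{s+1}} \lesssim \norm{\grad\Delta U_0}_{H^{s-1}} + \norm{\grad U_0}_{L^2}$, write $\grad\Delta U_0 = \grad\phi_0 - \grad(\mbox{error terms})$, bound $\norm{\grad(\mbox{error})}_{H^{s-1}} \leq \norm{\mbox{error}}_{H^s}$, and then run exactly your product estimates, noting that every resulting factor $\norm{\partial_{YY}U_0}_{H^s}$, $\norm{\partial_{YY}U_0}_{H^{1+}}$, $\norm{\partial_Y U_0}_{H^s}$ is dominated by $\norm{\grad U_0}_{H^{s+1}}$ for $s>1$ and hence absorbable. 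With that correction the argument for \eqref{ineq:Grad0} closes in the same way as the other two estimates.
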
 
\begin{proof} 
Consider \eqref{ineq:ExLow0}. First, 
\begin{align*}
\norm{\Delta_t^{-1} \phi_0}_{H^{s+2}} \leq \norm{\Delta \Delta_t^{-1} \phi_0}_{H^{s}} + \norm{\Delta_t^{-1} \phi_0}_{L^2}.  
\end{align*}
From the definition of $\Delta_t$ we have  
\begin{align}
\norm{\Delta \Delta_t^{-1} \phi_0}_{H^{s}} & \leq \norm{\phi_0}_{H^{s}} + \norm{ \left( G \partial_{YY} \Delta_t^{-1} \phi_0\right)}_{H^{s}} \nonumber \\ & \quad  + 2\norm{ \left(\psi_z\partial_{YZ} \Delta_t^{-1}\phi_0\right)}_{H^{s}} + \norm{ \left(\Delta_t C\partial_{Y} \Delta_t^{-1}\phi_0\right)}_{H^{s}}. \label{ineq:phi0est}
\end{align}
For the first two error terms we simply have, 
\begin{align*}
\norm{ \left( G \partial_{YY} \Delta_t^{-1} \phi_0\right)}_{H^{s}} + 2\norm{ \left(\psi_z\partial_{YZ} \Delta_t^{-1}\phi_0\right)}_{H^{s}} & \lesssim \norm{\grad C}_{H^s} \norm{\Delta \Delta_t^{-1}\phi_0}_{H^s} \\ 
& \lesssim \epsilon \nu^{-1}\norm{\Delta \Delta_t^{-1}\phi_0}_{H^s}, 
\end{align*}
which is then absorbed on the left-hand side of \eqref{ineq:phi0est} for $\epsilon \nu^{-1} \ll 1$. 
For the last error term we use the product rule and a frequency decomposition: 
\begin{align*}
\norm{ \left(\Delta_t C\partial_{Y} \Delta_t^{-1}\phi_0\right)}_{H^{s}} & \leq \norm{ \left(\Delta_t C\partial_{Y}P_{\leq 1}\Delta_t^{-1}\phi_0\right)}_{H^{s}} + \norm{ \left(\Delta_t C\partial_{Y} P_{> 1} \Delta_t^{-1} \phi_0\right)}_{H^{s}} \\ 
& \lesssim \norm{C}_{H^{s+2}}\norm{\Delta_t^{-1}\phi_0}_{L^2} + \norm{C}_{H^{s+2}}\norm{\Delta \Delta_t^{-1}\phi_0}_{H^s}.  
\end{align*}
The latter term is again absorbed on the left-hand side of \eqref{ineq:phi0est} for $\epsilon \nu^{-1} \ll 1$ (since $s \leq N$), the former is consistent with the right-hand side of \eqref{ineq:ExLow0}. 
Estimate \eqref{ineq:Grad0} follows by similar considerations. 

Estimate \eqref{ineq:0BasicPEL} follows from 
\begin{align*}
\norm{\Delta \Delta_t^{-1} \phi_0}_{H^{s}} & \leq \norm{\phi_0}_{H^{s}} + \norm{G \partial_{YY} \Delta_t^{-1} \phi_0}_{H^{s}} + 2\norm{\psi_z\partial_{YZ} \Delta_t^{-1}\phi_0}_{H^{s}} + \norm{\Delta_t C\partial_{Y}\Delta_t^{-1}\phi_0}_{H^{s}}.  \\ 
& \lesssim  \norm{\phi_0}_{H^{s}} + \epsilon \nu^{-1}\norm{\Delta \Delta_t^{-1} \phi_0}_{H^{s}} + \epsilon \nu^{-1}\norm{\grad \Delta_t^{-1}\phi_0}_{H^{s}} \\ 
& \lesssim  \norm{\phi_0}_{H^{s}} + \epsilon \nu^{-1}\norm{\Delta \Delta_t^{-1} \phi_0}_{H^{s}} + \epsilon \nu^{-1}\norm{P_{< 1} \left(\grad \Delta_t^{-1}\phi_0\right)}_{H^{s}} \\  
& \lesssim  \norm{\phi_0}_{H^{s}} + \epsilon \nu^{-1}\norm{\Delta \Delta_t^{-1} \phi_0}_{H^{s}} + \epsilon \nu^{-1}\norm{\grad \Delta_t^{-1}\phi_0}_{L^2}. 
\end{align*}
The second term is then absorbed on the left hand side. 
\end{proof} 

\begin{lemma} \label{lem:BasicPEL} 
Under the bootstrap hypotheses, for $\epsilon \nu^{-4/3}$ sufficiently small, there holds for any $\alpha \in [0,1]$, $3/2 < s \leq N$,   
\begin{align*}
\norm{m^\alpha \Delta_L \Delta_t^{-1} \phi_{\neq}}_{H^s} \lesssim \norm{m^\alpha \phi_{\neq}}_{H^s}. 
\end{align*} 
\end{lemma}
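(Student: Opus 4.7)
The plan is to treat $\Delta_t$ as a perturbation of $\Delta_L$ and close an iterative absorption estimate. From the definition of $\Delta_t$ in \S\ref{sec:coordchnge}, setting
\[
EF := G\,\partial_{YY}^L F + 2\psi_z\,\partial_{YZ}^L F + \Delta_tC\,\partial_Y^L F,
\]
one has $\Delta_t = \Delta_L + E$. Writing $\Phi := \Delta_t^{-1}\phi_{\neq}$, the defining identity $\Delta_t\Phi = \phi_{\neq}$ rearranges as $\Delta_L\Phi = \phi_{\neq} - E\Phi$, so it suffices to prove
\[
\norm{m^\alpha E\Phi}_{H^s} \leq \tfrac{1}{2}\, \norm{m^\alpha \Delta_L\Phi}_{H^s} + C\,\norm{m^\alpha \phi_{\neq}}_{H^s}
\]
for a universal constant $C$, after which the first term on the right is absorbed into the left-hand side of the desired bound. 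The required smallness is supplied by the bootstrap \eqref{boundC} combined with Lemma \ref{lem:CoefCtrl}: all three coefficients $G$, $\psi_z$, $\Delta_tC$ are of size $O(\epsilon\nu^{-1})$ in $L^\infty H^{N+1}$.

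I would focus on the leading term $G\,\partial_{YY}^L\Phi$ (the other two are analogous, the $\Delta_tC\,\partial_Y^L\Phi$ piece being strictly easier since it involves only one factor of $\grad_L$ on $\Phi$, compensated by an additional derivative absorbed into $C \in L^\infty H^{N+2}$). Since $G = G(Y,Z)$ is two-dimensional, the sharpened paraproduct estimate \eqref{LPprod2p5D} applies. For the low-high piece (with $G$ at low frequency), Lemma \ref{lem:mcomm} raised to the power $\alpha\in[0,1]$ yields the pointwise Fourier bound $m^\alpha(t,k,\eta,l) \lesssim \jap{\eta-\eta',l-l'}^{2\alpha}\, m^\alpha(t,k,\eta',l')$, allowing $m^\alpha$ to be moved past $G$ at the cost of $2\alpha \leq 2$ derivatives on the coefficient. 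Combined with the pointwise bound $\abs{(\eta-kt)^2\hat\Phi} \leq \abs{\widehat{\Delta_L\Phi}}$, this piece is controlled by $\norm{G}_{L^\infty H^{1+2\alpha+}}\norm{m^\alpha\Delta_L\Phi}_{H^s}\lesssim \epsilon\nu^{-1}\norm{m^\alpha\Delta_L\Phi}_{H^s}$, since $1+2\alpha+ \leq 3+ \leq N+1$ under the hypothesis $N > 5/2$. This piece is then absorbed into the left-hand side.

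The main obstacle is the high-low piece, with $G$ at high frequency: a direct commutator estimate via Lemma \ref{lem:mcomm} would demand $\norm{G}_{H^{s+2\alpha}}$, which for the critical application $(s,\alpha) = (N,1)$ (needed to establish \eqref{ineq:mU1Boot} and \eqref{ineq:mU3Boot}) reaches $H^{N+2}$, beyond the uniform-in-time control $\norm{G}_{L^\infty H^{N+1}} \lesssim \epsilon\nu^{-1}$. My plan to resolve this is to first establish the $\alpha = 0$ version of the lemma by exactly the same iteration (requiring no commutator estimate), yielding $\norm{\Delta_L\Phi}_{H^{1+}}\lesssim\norm{\phi_{\neq}}_{H^{1+}}$. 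Then, bounding $m^\alpha\leq 1$ pointwise (from \eqref{mlowbound}) and applying \eqref{LPprod2p5D} with $G$ on the high-frequency side controls the HL piece by $\norm{G}_{H^s}\norm{\Delta_L\Phi}_{H^{1+}}\lesssim \epsilon\nu^{-1}\norm{\phi_{\neq}}_{H^{1+}}$; Sobolev embedding ($s > 3/2 > 1$) gives $\norm{\phi_{\neq}}_{H^{1+}}\lesssim \norm{\phi_{\neq}}_{H^s}$, and the sharp threshold $\epsilon\nu^{-4/3}\ll 1$ is exactly what is needed to balance this estimate against the $m^{-\alpha} \lesssim \nu^{-2\alpha/3}$ loss when comparing $\norm{\phi_{\neq}}_{H^s}$ to $\norm{m^\alpha\phi_{\neq}}_{H^s}$ via \eqref{mlowbound}, closing the iteration uniformly in $\alpha\in[0,1]$.
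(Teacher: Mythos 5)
Your setup (write $\Delta_L\Phi=\phi_{\neq}-E\Phi$ and absorb) is the same as the paper's, and your treatment of the low-high piece is fine. The genuine gap is in the high-low piece, and it is quantitative: bounding $m^\alpha\le 1$ there and only restoring $m^\alpha$ at the very end on $\phi_{\neq}$ via \eqref{mlowbound} costs $\norm{G}_{H^s}\cdot \nu^{-2\alpha/3}\lesssim \epsilon\nu^{-1}\cdot\nu^{-2\alpha/3}$, which at the critical value $\alpha=1$ (the case actually needed for \eqref{ineq:mU1Boot}, \eqref{ineq:mU3Boot}) is $\epsilon\nu^{-5/3}$, not $\epsilon\nu^{-4/3}$ as you claim ($1+\tfrac23=\tfrac53$). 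Under the bootstrap $\epsilon\lesssim\nu^{3/2}$ this prefactor is of size $\nu^{-1/6}$, so your high-low estimate neither is absorbed nor is dominated by $\norm{m^\alpha\phi_{\neq}}_{H^s}$; the iteration closes only for $\alpha\le 1/2$, which is not enough. (A secondary, fixable slip: \eqref{LPprod2p5D} requires the two-dimensional function to be the \emph{low}-frequency factor, so in the HL piece you must use \eqref{LPprod} and hence $H^{3/2+}$, not $H^{1+}$, on the low factor; this does not affect the main counting.)

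The paper avoids exactly this loss by a partial commutation: write $m^\alpha=m^{\alpha-\beta}m^{\beta}$ with $\beta=\min(1/2,\alpha)$, bound $m^{\alpha-\beta}\le 1$, and commute only $m^{\beta}$ past the coefficient using Lemma \ref{lem:mcomm}, which costs at most $2\beta\le 1$ derivative on $\grad C$, i.e.\ only $\norm{\grad C}_{H^{s+\min(2\alpha,1)}}\le\norm{\grad C}_{H^{N+1}}\lesssim\epsilon\nu^{-1}$ (so the $H^{N+2}$ obstruction you identified never arises). The resulting factor $\norm{m^{\beta}\Delta_L P}_{H^s}$ is then converted back to $\norm{m^{\alpha}\Delta_L P}_{H^s}$ using $m\gtrsim\nu^{2/3}$ at cost $\nu^{-\max(0,2\alpha-1)/3}\le\nu^{-1/3}$, and this error, with total prefactor $\epsilon\nu^{-4/3}$, is absorbed into the \emph{left-hand side} rather than converted into a bound by $\norm{m^\alpha\phi_{\neq}}_{H^s}$. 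To repair your argument you would need to implement this partial commutation (or an equivalent device) in your high-low piece; the low-regularity bootstrap through $\norm{\Delta_L\Phi}_{H^{3/2+}}\lesssim\norm{\phi_{\neq}}_{H^{3/2+}}$ by itself cannot recover the missing factor of $\nu^{1/3}$.
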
 
\begin{proof} 
Writing $P = \Delta_t^{-1}\phi$ gives 
\begin{align*}
\Delta_L P = \phi - G\partial_{Y}^L\partial_{Y}^LP - 2\psi_z \partial_Y^L \partial_Z P - \Delta_t C \partial_Y^L P. 
\end{align*}
Applying $\jap{D}^s m^\alpha$ to both sides gives
\begin{align}
\norm{m^\alpha \Delta_L \Delta_t^{-1} \phi_{\neq}}_{H^s} \lesssim \norm{m^\alpha \phi_{\neq}}_{H^s} + \sum_{j = 1}^3 \mathcal{E}_j, \label{ineq:basicPELstp} 
\end{align}
where
$$
\mathcal{E}_1 = \| m^\alpha G\partial_{Y}^L\partial_{Y}^LP \|_{H^s}, \quad \mathcal{E}_2 = \| m^\alpha \psi_z \partial_{Y}^L\partial_{Z}P \|_{H^s}, \quad \mathcal{E}_3 = \| m^\alpha  \Delta_t C \partial_Y^L P \|_{H^s}.
$$
By Lemma \ref{lem:mcomm} we can deduce 
\begin{align*}
\mathcal{E}_{1} + \mathcal{E}_{2}  & \lesssim \norm{\grad C}_{H^{s+\min(2\alpha,1)}}\norm{m^{\min(1/2,\alpha)} \Delta_L P}_{H^{s}} \\
& \lesssim \nu^{-\max(0,2\alpha-1)/3 } \norm{\grad C}_{H^{s+\min(2\alpha,1)}}\norm{m^{\alpha} \Delta_L P}_{H^{s}}. 
\end{align*}
However, since $s \leq N$, by the bootstrap hypotheses,  
\begin{align*}
\nu^{-\max(0,2\alpha-1)/3 } \norm{\grad C}_{H^{s+\min(2\alpha,1)}} \lesssim \epsilon \nu^{-4/3} \ll 1,  
\end{align*}
and this error can be absorbed by the LHS of the estimate in \eqref{ineq:basicPELstp}. 
For $\mathcal{E}_3$ we apply \eqref{ineq:mDelTrick}: 
\begin{align*}
\mathcal{E}_3 & \lesssim \norm{\grad^2 C}_{H^{s}}\norm{\grad_L P}_{H^{s}} \lesssim \norm{\grad C}_{H^{s+1}}\norm{m^{\min(1/2,\alpha)}\Delta_L P}_{H^{s}}, 
\end{align*}
and from here we may proceed as in $\mathcal{E}_{1,2}$ above. 
\end{proof}

\begin{lemma} \label{lem:UNLP} 
Under the bootstrap hypotheses, for $\epsilon \nu^{-4/3}$ sufficiently small, there holds for any $3/2 < s \leq N$,   
\begin{align*}
\norm{ \Delta_t^{-1} \partial_i^t \partial_j^t \phi_{\neq}}_{H^s} \lesssim \norm{\phi_{\neq}}_{H^s}. 
\end{align*} 
\end{lemma}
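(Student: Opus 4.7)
The plan is to isolate $\Delta_L$ inside $\Delta_t$ and reduce everything to the boundedness of certain Riesz-type Fourier multipliers on non-zero $X$ frequencies. The key elementary observation is that, since $k \in \Integers$ with $k \neq 0$ forces $k^2 + (\eta - kt)^2 + l^2 \geq 1$, every operator of the form $\Delta_L^{-1}$, $\Delta_L^{-1}\partial_m^L$, or $\Delta_L^{-1}\partial_k^L\partial_l^L$ has a symbol bounded by $1$ on non-zero $X$ modes and is therefore $H^s$-bounded there.

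Set $F = \Delta_t^{-1}\partial_i^t \partial_j^t \phi_{\neq}$; note $F$ has non-zero $X$ frequency. Using $\Delta_t = \Delta_L + G\partial_{YY}^L + 2\psi_z\partial_{YZ}^L + \Delta_t C\,\partial_Y^L$ and applying $\Delta_L^{-1}$ (legitimate on non-zero $X$ modes),
\[
F = \Delta_L^{-1}\partial_i^t\partial_j^t \phi_{\neq} - \Delta_L^{-1}\!\left[G\partial_{YY}^L F + 2\psi_z\partial_{YZ}^L F + \Delta_t C\,\partial_Y^L F\right].
\]
Each bracketed error term combines a coefficient controlled by $\|\nabla C\|_{H^{N+2}} \lesssim \epsilon\nu^{-1}$ (via Lemma~\ref{lem:CoefCtrl} and \eqref{boundC}) with an $\leq 2$-order derivative of $F$ that is absorbed by $\Delta_L^{-1}$. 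A Leibniz rearrangement such as
\[
G\,\partial_{YY}^L F = \partial_{YY}^L(GF) - 2\partial_Y^L[(\partial_Y^L G)F] + (\partial_{YY}^L G)F
\]
puts each piece into the schematic form $\Delta_L^{-1}\mathcal{D}[(\partial^\alpha G)\,F]$ with $\mathcal{D} \in \{\mathrm{Id}, \partial^L_\bullet, \partial^L_\bullet\partial^L_\bullet\}$ and $|\alpha| \leq 2$. By the bounded-symbol observation and the Sobolev algebra property ($s > 3/2$), combined with $\|G\|_{H^{s+|\alpha|}} \lesssim \|\nabla C\|_{H^{s+|\alpha|}} \lesssim \epsilon\nu^{-1}$ for $s + |\alpha| \leq N+2$, the total contribution is $\lesssim \epsilon\nu^{-1}\,\|F\|_{H^s}$, which is absorbed on the left for $\delta$ small.

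The main term $\Delta_L^{-1}\partial_i^t\partial_j^t \phi_{\neq}$ is treated in the same spirit. Expanding $\partial_Y^t = (1+\psi_y)\partial_Y^L$ and $\partial_Z^t = \partial_Z + \psi_z\partial_Y^L$ produces a finite sum of pieces of the form $\Delta_L^{-1}[\mu\,\partial_k^L\partial_l^L \phi_{\neq}]$ and $\Delta_L^{-1}[(\partial_a^L \mu)\,\partial_b^L \phi_{\neq}]$, where $\mu$ is a polynomial in $\psi_y,\psi_z$. The identity
\[
\mu\,\partial_k^L\partial_l^L \phi_{\neq} = \partial_k^L\partial_l^L(\mu\phi_{\neq}) - (\partial_{kl}^L \mu)\phi_{\neq} - (\partial_k^L \mu)\partial_l^L \phi_{\neq} - (\partial_l^L\mu)\partial_k^L \phi_{\neq},
\]
applied recursively (and a similar identity for the lower-order pieces), rewrites each contribution as $\Delta_L^{-1}\mathcal{D}[(\partial^\alpha \mu)\phi_{\neq}]$ with $|\alpha| \leq 2$. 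The bounded-symbol observation handles $\Delta_L^{-1}\mathcal{D}$, while Sobolev algebra and Lemma~\ref{lem:CoefCtrl} give $\|(\partial^\alpha \mu)\phi_{\neq}\|_{H^s} \lesssim \|\nabla C\|_{H^{s+2}}\,\|\phi_{\neq}\|_{H^s} \lesssim (1+\epsilon\nu^{-1})\|\phi_{\neq}\|_{H^s}$, valid precisely because the hypothesis $s \leq N$ yields $s + 2 \leq N+2$.

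The main obstacle is avoiding a regularity loss when commuting multiplication by a coefficient past $\Delta_L^{-1}$. The recursive Leibniz identity above is tailored so that, after finitely many steps, at most two derivatives land on $\mu$ (matching the available control of $C$ in $H^{N+2}$) and none are lost from $\phi_{\neq}$; this is exactly where the ceiling $s \leq N$ becomes sharp. No $m$-multiplier commutators or $\nu^{-1/3}$ losses appear, and in fact the smallness $\epsilon\nu^{-4/3} \ll 1$ is much more than required for the argument, the actual threshold being $\epsilon\nu^{-1} \ll 1$.
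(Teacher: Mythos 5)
There is a genuine gap, and it sits exactly where you claim the argument is sharp. Your scheme controls only $\norm{F}_{H^s}$ with $F=\Delta_t^{-1}\partial_i^t\partial_j^t\phi_{\neq}$, so to exploit the boundedness of $\Delta_L^{-1}\mathcal{D}$ you must integrate by parts and let up to two derivatives fall on the coefficients; in the paraproduct piece where the coefficient carries the high frequency this forces bounds of the form $\norm{\partial^2 G}_{H^s}\norm{F}_{H^{3/2+}}$ and $\norm{\partial^2\mu}_{H^s}\norm{\phi_{\neq}}_{H^{3/2+}}$, i.e.\ you need $\norm{\grad C(t)}_{H^{s+2}}\lesssim \epsilon\nu^{-1}$ pointwise in time for $s$ up to $N$. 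The bootstrap \eqref{boundC} only gives $C\in L^\infty H^{N+2}$, hence $\norm{\grad C(t)}_{H^{N+1}}\lesssim\epsilon\nu^{-1}$; the $H^{N+2}$ control of $\grad C$ is only $L^2$ in time and costs an extra $\nu^{-1/2}$. So your key inequality ``$\norm{(\partial^\alpha\mu)\phi_{\neq}}_{H^s}\lesssim\norm{\grad C}_{H^{s+2}}\norm{\phi_{\neq}}_{H^s}\lesssim(1+\epsilon\nu^{-1})\norm{\phi_{\neq}}_{H^s}$ because $s+2\leq N+2$'' is unjustified for $s\in(N-1,N]$, which is part of the claimed range. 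This is not a cosmetic issue: the paper's Lemma \ref{lem:EDPEL} exhibits exactly this obstruction, keeping an explicit $\norm{\grad C}_{H^{s+2}}$ term in its conclusion that can only be dropped for $s\leq N-1$ (Remark \ref{rmk:EDPEL}). Your argument does prove the statement for $3/2<s\leq N-1$ (which happens to be how the lemma is used in the $U^1_{\neq}$ estimate), but not the lemma as stated.

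The paper's proof avoids the problem by never moving derivatives onto the coefficients. Its first observation is that $\Delta_t^{-1}$ and $\partial_i^t\partial_j^t$ commute \emph{exactly} (undo the coordinate change, commute them as constant-coefficient Fourier multipliers, redo the change), so the quantity to bound is $\norm{\partial_i^t\partial_j^t\Delta_t^{-1}\phi_{\neq}}_{H^s}$; this is $\lesssim\norm{\Delta_L\Delta_t^{-1}\phi_{\neq}}_{H^s}$ by \eqref{ineq:gradtL}, and one concludes with Lemma \ref{lem:BasicPEL}. In Lemma \ref{lem:BasicPEL} the controlled quantity $\norm{\Delta_L P}_{H^s}$ already dominates $\norm{\partial_{YY}^LP}_{H^s}$, so the error terms $G\partial_{YY}^LP$, $\psi_z\partial_{YZ}^LP$, $\Delta_tC\,\partial_Y^LP$ are absorbed by the algebra property using at most $\norm{\grad C}_{H^{s+1}}\leq\norm{C}_{H^{N+2}}$ — no integration by parts, hence no extra derivative on $C$. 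If you want to keep your route, you must either prove that stronger $\Delta_L\Delta_t^{-1}$ bound first (at which point you have reproduced Lemma \ref{lem:BasicPEL}) or restrict to $s\leq N-1$. Your side remark that $\epsilon\nu^{-1}\ll1$ should suffice is plausible for the $\alpha=0$ case but moot given the gap.
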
 
\begin{proof} 
The first observation is that $\Delta_t^{-1}$ and $\partial_i^t \partial_j^t$ \emph{commute} -- indeed one need only un-do the coordinate transform, commute them as Fourier multipliers, and then re-do the coordinate transform. Therefore the estimate is the same as 
\begin{align*}
\norm{\partial_i^t \partial_j^t \Delta_t^{-1}  \phi_{\neq}}_{H^s} \lesssim \norm{\phi_{\neq}}_{H^s}. 
\end{align*}
By the $L^\infty H^{N+2}$ control on $C$ and the projection to non-zero frequencies, we have 
\begin{align*}
\norm{\partial_i^t \partial_j^t \Delta_t^{-1}  \phi_{\neq}}_{H^s} \lesssim \norm{\Delta_L \Delta_t^{-1}  \phi_{\neq}}_{H^s}. 
\end{align*}
Hence, the desired result now follows from Lemma \ref{lem:BasicPEL}. 
\end{proof} 

\begin{lemma}\label{lem:EDPEL} 
Under the bootstrap hypotheses, for $\epsilon \nu^{-4/3}$ sufficiently small, there holds for any $\alpha \in [0,1]$, $3/2 < s \leq N$,  
\begin{align}
\norm{\grad_L m^\alpha \Delta_L \Delta_t^{-1} \phi_{\neq}}_{H^s} \lesssim \norm{\grad_L m^\alpha \phi_{\neq}}_{H^s} + \norm{\grad C}_{H^{s+2}} \nu^{-\max(0,2\alpha-1)/3} \norm{m^\alpha \phi_{\neq}}_{H^{3/2+}}. \label{ineq:EDPEL}
\end{align}
\end{lemma}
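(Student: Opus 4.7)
The plan is to adapt the proof of Lemma \ref{lem:BasicPEL}. Setting $P = \Delta_t^{-1}\phi_{\neq}$, the identity $\Delta_t P = \phi_{\neq}$ rearranges to
\begin{equation*}
\Delta_L P = \phi_{\neq} - G (\partial_Y^L)^2 P - 2\psi_z \partial_Y^L \partial_Z P - (\Delta_t C)\, \partial_Y^L P.
\end{equation*}
Applying $\grad_L m^\alpha$ and taking $H^s$ norms produces
\begin{equation*}
\norm{\grad_L m^\alpha \Delta_L P}_{H^s} \leq \norm{\grad_L m^\alpha \phi_{\neq}}_{H^s} + \mathcal{E}_1 + \mathcal{E}_2 + \mathcal{E}_3,
\end{equation*}
where the $\mathcal{E}_j$ are the three coefficient error terms.

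For each $\mathcal{E}_j$ I would perform a paraproduct decomposition as in \S\ref{sec:LPP}. The low-high pieces (coefficient at low frequency) should be absorbed into the left-hand side: Lemma \ref{lem:mcomm} lets $m^\alpha$ commute past the low-frequency coefficient at $O(1)$ cost, the pointwise inequality $|\grad_L (\partial_Y^L)^k| \leq |\grad_L \Delta_L|$ preserves the $\grad_L m^\alpha \Delta_L P$ structure on the $P$-factor, and by Lemma \ref{lem:CoefCtrl} together with the bootstrap the low-frequency coefficient is at most $\epsilon\nu^{-1}$ in $L^\infty$, which is absorbable for $\epsilon\nu^{-4/3}$ sufficiently small.

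For the high-low pieces, mirroring Lemma \ref{lem:BasicPEL}'s strategy I would commute $m^{\min(1/2,\alpha)}$ (rather than $m^\alpha$) past the high-frequency coefficient via Lemma \ref{lem:mcomm}, at cost $H^{s+\min(1,2\alpha)}$ regularity on the coefficient, and then bound the low-frequency $P$-factor in $H^{3/2+}$ by appealing to Lemma \ref{lem:BasicPEL}. The elementary pointwise inequality $m^{\min(1/2,\alpha)} \leq \nu^{-\max(0,2\alpha-1)/3} m^\alpha$, a consequence of $m \geq \nu^{2/3}$ (cf.\ \eqref{mlowbound}), then produces the advertised $\nu$ factor. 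For $\mathcal{E}_1$ and $\mathcal{E}_2$ the coefficients $G,\psi_z$ are first order in $\grad C$ by Lemma \ref{lem:CoefCtrl}, so the resulting regularity cost is at most $\|\grad C\|_{H^{s+2}}$, as required.

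The main obstacle will be $\mathcal{E}_3$, whose coefficient $\Delta_t C$ is already second order in $C$; the naive high-low estimate would cost $\|\grad C\|_{H^{s+3}}$, exceeding the advertised $H^{s+2}$ bound. I would first split via the product rule $\grad_L[(\Delta_t C)\partial_Y^L P] = (\Delta_t C)\grad_L \partial_Y^L P + (\grad \Delta_t C)\partial_Y^L P$: the first summand reduces to the $\mathcal{E}_1$/$\mathcal{E}_2$-type analysis since $|\grad_L \partial_Y^L| \leq |\Delta_L|$ pointwise. For the second summand I would exploit that $\Delta_t C$ is $X$-independent (so $\grad \Delta_t C = \grad_L \Delta_t C$ carries no problematic $-t\partial_X$ contribution) and transfer one spatial derivative from the coefficient onto the $P$-factor via a para-commutator estimate, reducing the effective coefficient cost to $\|\Delta_t C\|_{H^{s+\min(1,2\alpha)+1}} \lesssim \|\grad C\|_{H^{s+2}}$ for $\alpha \in [0,1]$, which is what the lemma requires.
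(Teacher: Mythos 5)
Your setup, your treatment of $\mathcal{E}_1,\mathcal{E}_2$, and your use of $m^{\min(1/2,\alpha)}\leq \nu^{-\max(0,2\alpha-1)/3}m^\alpha$ all match the paper's argument, but the handling of the worst piece of $\mathcal{E}_3$ — the summand $(\grad \Delta_t C)\,\partial_Y^L P$ with the coefficient at high frequency — has a genuine gap. There is no mechanism behind ``transfer one spatial derivative from the coefficient onto the $P$-factor via a para-commutator estimate'': this is a pure product estimate in $H^s$, there is no pairing against which to integrate by parts, and a paraproduct decomposition cannot remove a derivative that is already part of the high-frequency factor (in the high-low piece the output frequency is comparable to the coefficient frequency, so the coefficient must absorb the full $H^s$ weight). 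Moreover the inequality you end with, $\norm{\Delta_t C}_{H^{s+\min(1,2\alpha)+1}} \lesssim \norm{\grad C}_{H^{s+2}}$, is false for $\alpha>0$: the left side costs roughly $s+3+\min(1,2\alpha)$ derivatives of $C$, i.e.\ it exceeds the budget by exactly the commutation cost $\min(1,2\alpha)$ from Lemma \ref{lem:mcomm} that you insisted on paying. Since for $s=N$, $\alpha>0$ the norm $\norm{\grad C}_{H^{s+2+\min(1,2\alpha)}}$ is not controlled by the bootstrap, this term is not closed by your argument.

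The resolution in the paper is simpler: for this piece one should \emph{not} commute any power of $m$ past the high-frequency coefficient at all. Bound $m^\alpha\leq 1$, so the coefficient only costs $\norm{\grad \Delta_t C}_{H^{s}} \lesssim \norm{\Delta C}_{H^{s+1}} \lesssim \norm{\grad C}_{H^{s+2}}$, and recover the missing $m$-weight on the low-frequency factor for free from \eqref{ineq:mDelTrick}: because the term $\Delta_t C\,\partial_Y^L P$ carries only \emph{one} derivative on $P$ while Lemma \ref{lem:BasicPEL} controls two, the spare power of $\grad_L$ converts into $m^{1/2}$ at nonzero $X$-frequency, i.e.\ $\norm{\grad_L P}_{H^{3/2+}} \lesssim \norm{m^{1/2}\Delta_L P}_{H^{3/2+}} \leq \nu^{-\max(0,2\alpha-1)/3}\norm{m^{\alpha}\Delta_L P}_{H^{3/2+}}$ by \eqref{mlowbound}, and then Lemma \ref{lem:BasicPEL} gives the second term of \eqref{ineq:EDPEL}. (A smaller imprecision of the same flavor: in the high-low pieces of $\mathcal{E}_{1,2}$ where the outer $\grad_L$ falls on the low-frequency $P$-factor, the factor $\norm{\grad_L m^{\min(1/2,\alpha)}\Delta_L P}_{H^{3/2+}}$ is \emph{not} controlled by Lemma \ref{lem:BasicPEL}, since $\grad_L$ contains $\partial_Y-t\partial_X$; there one bounds $H^{3/2+}$ by $H^s$ and absorbs the term into the left-hand side using the smallness $\norm{\grad C}_{H^{s+1}}\nu^{-\max(0,2\alpha-1)/3} \lesssim \epsilon\nu^{-4/3}$, which is the route the paper takes.)
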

\begin{remark} \label{rmk:EDPEL}
For $s \leq N-1$, the second term in \eqref{ineq:EDPEL} can be removed.  
\end{remark} 
\begin{proof} 
Define $P = \Delta_t^{-1} \phi_{\neq}$. 
As in the proof of Lemma \ref{lem:BasicPEL} above, 
\begin{align}
\norm{\grad_L m^\alpha \Delta_L \Delta_t^{-1} \phi_{\neq}}_{H^s} \lesssim \norm{\grad_L m^\alpha \phi_{\neq}}_{H^s} + \sum_{j = 1}^3 \mathcal{E}_j, \label{EDPELintermed}
\end{align}
where
\begin{align*} 
\mathcal{E}_1 = \| \grad_L m^\alpha G\partial_{Y}^L\partial_{Y}^LP \|_{H^s}, \quad \mathcal{E}_2 = \| \grad_L  m^\alpha \psi_z \partial_{Y}^L\partial_{Z}P \|_{H^s}, \quad \mathcal{E}_3 = \| \grad_L m^\alpha  \Delta_t C \partial_Y^L P \|_{H^s}.
\end{align*} 
By Leibniz's rule, a paraproduct decomposition, and Lemma \ref{lem:mcomm}, 
\begin{align*}
\mathcal{E}_{1,2} & \lesssim \norm{\grad C}_{H^{1+2\alpha+}}\norm{\grad_L m^\alpha \Delta_L P}_{H^s}  + \norm{\grad C}_{H^{2+\min(2\alpha,1)}}\norm{m^{\min(1/2,\alpha)} \Delta_L P}_{H^s} \\  
& \quad + \norm{\grad C}_{H^{s+\min(2\alpha,1)}}\norm{\grad_L m^{\min(1/2,\alpha)} \Delta_L P}_{H^{3/2+}} + \norm{\grad C}_{H^{s+1+\min(2\alpha,1)}}\norm{m^{\min(1/2,\alpha)}\Delta_L P}_{H^{3/2+}}. 
\end{align*}
By \eqref{mlowbound}, \eqref{ineq:mDelTrick}, and $N > 2$, we have 
\begin{align*}
\mathcal{E}_{1,2} & \lesssim \left(\norm{C}_{H^{N+2}} + \nu^{-\max(0,2\alpha-1)/3} \norm{\grad C}_{H^{s+\min(2\alpha,1)}} \right)\norm{\grad_L m^\alpha \Delta_L P}_{H^s} \\ & \quad + \nu^{-\max(0,2\alpha-1)/3} \norm{\grad C}_{H^{3+}}\norm{m^\alpha \Delta_L P}_{H^s}  + \nu^{-\max(0,2\alpha-1)/3} \norm{\grad C}_{H^{s+2}}\norm{ m^{\alpha} \Delta_L P}_{H^{3/2+}} \\ 
& \lesssim \epsilon \nu^{-4/3} \norm{\grad_L m^\alpha \Delta_L P}_{H^s} + \nu^{-\max(0,2\alpha-1)/3} \norm{\grad C}_{H^{s+2}}\norm{m^\alpha \Delta_L P}_{H^s}.  
\end{align*}
By $\epsilon \nu^{-4/3}$ sufficiently small, the first term can be absorbed on the right hand side of \eqref{EDPELintermed} and the second term is consistent with the stated \eqref{ineq:EDPEL} by Lemma \ref{lem:BasicPEL}. 

For $\mathcal{E}_3$ we apply again Leibniz's rule 
and \eqref{ineq:mDelTrick}: 
\begin{align*}
\mathcal{E}_3 & \lesssim \norm{\Delta C}_{H^{1+2\alpha+}}\norm{m^\alpha  \Delta_L P}_{H^s} + \norm{\Delta C}_{H^{2+}}\norm{\grad_L P}_{H^s} \\ 
& \quad + \norm{\Delta C}_{H^{s}}\norm{\Delta_L P}_{H^{3/2+}} + \norm{\Delta C}_{H^{s+1}}\norm{\grad_L P}_{H^{3/2+}} \\ 
& \lesssim \norm{C}_{H^{N+2}}\norm{m^\alpha \grad_L \Delta_L P}_{H^s}  + \norm{\Delta C}_{H^{s}}\norm{m^{\min(1/2,\alpha)}\grad_L \Delta_L P}_{H^{3/2+}} \\ & \quad + \norm{\Delta C}_{H^{s+1}}\norm{m^{\min(1/2,\alpha)}\Delta_L P}_{H^{3/2+}} \\  
& \lesssim \nu^{-\max(0,2\alpha-1)/3}\norm{C}_{H^{N+2}}\norm{m^\alpha \grad_L \Delta_L P}_{H^s} + \norm{\Delta C}_{H^{s+1}}\norm{m^{\min(1/2,\alpha)}\Delta_L P}_{H^{3/2+}} \\  
& \lesssim \epsilon \nu^{-4/3} \norm{m^{\alpha}\grad_L \Delta_L P}_{H^{s}} + \nu^{-\max(0,2\alpha-1)/3} \norm{\grad C}_{H^{s+2}}\norm{m^{\alpha}\Delta_L P}_{H^{3/2+}}. 
\end{align*}
As above, for $\epsilon \nu^{-4/3}$ sufficiently small, the first term can be absorbed on the right hand side of \eqref{EDPELintermed} and the second term is consistent with the stated \eqref{ineq:EDPEL} by Lemma \ref{lem:BasicPEL}. 
Also note that if $s+3 \leq N+2$, then the latter term can be absorbed on the right hand side of \eqref{EDPELintermed} for $\epsilon \nu^{-4/3} \ll 1$, as claimed in Remark \ref{rmk:EDPEL}. 
\end{proof} 

\begin{lemma}\label{lem:dotMPEL} 
Suppose $i \in \set{0,1,2}$. Under the bootstrap hypotheses, for $\epsilon \nu^{-3/2}$ sufficiently small, there holds for any $\alpha \in [0,1]$, $0 \leq s \leq N$,  
\begin{align}
\norm{\sqrt{-\dot{M}^i M^i} m^\alpha \Delta_L \Delta_t^{-1} \phi_{\neq}}_{H^s} \lesssim \norm{\sqrt{-\dot{M} M} m^\alpha \phi_{\neq}}_{H^s} + (\epsilon \nu^{-3/2}) \nu^{1/2} \norm{\grad_L m^{\alpha} \Delta_L \Delta_t^{-1}\phi_{\neq}}_{H^s}. \label{ineq:dotMPEL}
\end{align}
\end{lemma}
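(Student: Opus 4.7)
The plan is to mirror the proofs of Lemmas \ref{lem:BasicPEL} and \ref{lem:EDPEL}. Set $P = \Delta_t^{-1}\phi_{\neq}$, so that
$$
\Delta_L P = \phi_{\neq} - G\,\partial_{YY}^L P - 2\psi_z\,\partial_{YZ}^L P - (\Delta_t C)\,\partial_Y^L P,
$$
and apply $\jap{D}^s m^{\alpha}\sqrt{-\dot{M}^i M^i}$ together with the triangle inequality. The contribution of $\phi_{\neq}$ is controlled by the first term on the right-hand side of \eqref{ineq:dotMPEL} because $\sqrt{-\dot{M}^i M^i}\lesssim \sqrt{-\dot M M}$ (the remaining $M^j$ factors with $j\neq i$ are bounded below by positive constants). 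One is then left with three error terms
$$
\mathcal{E}_j = \|\sqrt{-\dot{M}^i M^i}\, m^{\alpha}\,c_j\,\partial^L_{\sigma_j} P\|_{H^s},\qquad j=1,2,3,
$$
where $(c_j,\partial^L_{\sigma_j})$ runs over $(G,\partial_{YY}^L)$, $(\psi_z,\partial_{YZ}^L)$, $(\Delta_t C,\partial_Y^L)$, to bound by the right-hand side of \eqref{ineq:dotMPEL}.

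For each $\mathcal{E}_j$ I would apply a paraproduct decomposition, splitting into an $HL$ piece (coefficient at high frequency) and an $LH$ piece (derivative of $P$ at high frequency), and use Lemmas \ref{lem:mcomm} and \ref{lem:dotMcomm} to commute the multipliers. In the $LH$ piece, one commutes $\sqrt{-\dot{M}^i M^i}m^{\alpha}$ onto the high-frequency factor at a loss in $\jap{\eta-\eta',l-l'}$ absorbed by the $H^{3/2+}$ regularity of the coefficient, yielding $\lesssim \|\grad C\|_{L^\infty H^{3+2\alpha+}}\,\|\sqrt{-\dot{M}^i M^i}m^{\alpha}\Delta_L P\|_{H^s}\lesssim \epsilon\nu^{-1}\,\|\sqrt{-\dot{M}^i M^i}m^{\alpha}\Delta_L P\|_{H^s}$, which is absorbed back into the left-hand side of \eqref{ineq:dotMPEL} for $\epsilon\nu^{-1}\ll 1$. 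In the $HL$ piece the multipliers land on the coefficient, and we use the inequality $1\lesssim \sqrt{-\dot{M}^0 M^0}\,|\grad_L|$ valid on non-zero $X$ frequencies to trade the lack of regularity on the low-frequency derivative of $P$ for one factor of $\grad_L$ acting on $\Delta_L P$; this yields a bound of the form $\|\grad C\|_{L^\infty H^{s+2}}\,\|\grad_L m^{\alpha}\Delta_L P\|_{H^s}\lesssim (\epsilon\nu^{-3/2})\,\nu^{1/2}\,\|\grad_L m^{\alpha}\Delta_L P\|_{H^s}$, exactly the second term on the right-hand side of \eqref{ineq:dotMPEL}.

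The term $\mathcal{E}_3$ is slightly more delicate because $\Delta_t C$ carries two derivatives of $C$; expanding $\Delta_t C = \Delta C + G\partial_{YY}^L C + 2\psi_z\partial_{YZ}^L C$ reduces $\mathcal{E}_3$ to a linear-in-$C$ term (treated identically to $\mathcal{E}_1,\mathcal{E}_2$) plus quadratic-in-$C$ terms, which gain an extra factor $\|\grad C\|_{L^\infty H^{N+1}}\lesssim \epsilon\nu^{-1}\ll 1$ and are therefore negligible. The main obstacle will be the bookkeeping of the $M^1$ commutator cost of $\jap{\eta-\eta',l-l'}^{3/2}$, which exceeds the naive one-derivative loss from a product rule; the device from Lemma \ref{lem:EDPEL} of replacing $m^{\alpha}$ by $m^{\min(\alpha,1/2)}$ at the price of $\nu^{-\max(0,2\alpha-1)/3}$ keeps this under control, and the extra half-derivative of commutator loss beyond the one-derivative slack between $s\leq N$ and $\grad C\in L^\infty H^{N+1}$ is precisely what forces the full $\grad_L$ to appear on the right-hand side of \eqref{ineq:dotMPEL}. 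The $\epsilon\nu^{-3/2}$ smallness is used sharply only in the $HL$ piece of this estimate.
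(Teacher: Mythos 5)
Your skeleton is the paper's: set $P=\Delta_t^{-1}\phi_{\neq}$, compare $\Delta_L P$ with $\phi_{\neq}$, split the three coefficient errors $\mathcal{E}_1,\mathcal{E}_2,\mathcal{E}_3$ by paraproduct, and commute the multipliers with Lemmas \ref{lem:mcomm} and \ref{lem:dotMcomm}; your $LH$ treatment (absorb into the left-hand side, using the $m^{\min(\alpha,1/2)}$ / $\nu^{-\max(0,2\alpha-1)/3}$ device to keep the derivative load on $C$ within $H^{N+1}$) is fine in spirit. The genuine gap is in the $HL$ pieces, where the low-frequency factor $\norm{m^{\alpha}\Delta_L P}_{H^{3/2+}}$ (resp. $\norm{\grad_L P}_{H^{3/2+}}$ in $\mathcal{E}_3$) appears. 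You propose to handle it with $1\lesssim\sqrt{-\dot M^0 M^0}\,\abs{\grad_L}$, i.e.\ effectively the crude $1\lesssim\abs{\grad_L}$, arriving at $\norm{\grad C}_{H^{s+2}}\norm{\grad_L m^{\alpha}\Delta_L P}_{H^s}$. This does not yield \eqref{ineq:dotMPEL}: at $s=N$ the bootstrap only controls $\grad C$ in $H^{N+1}$, so $\norm{\grad C}_{H^{s+2}}\lesssim\epsilon\nu^{-1}$ is not available; and if, as you suggest, you instead commute only $m^{\min(\alpha,1/2)}$ to keep the coefficient in $H^{s+1}$, you pay $\nu^{-\max(0,2\alpha-1)/3}$ and end with the prefactor $\epsilon\nu^{-1-\max(0,2\alpha-1)/3}=(\epsilon\nu^{-3/2})\nu^{(5-4\alpha)/6}$, strictly weaker than the claimed $(\epsilon\nu^{-3/2})\nu^{1/2}$ whenever $\alpha>1/2$ — precisely the regime ($\alpha=1$, $s=N$) in which the lemma is invoked to prove \eqref{ineq:mU1Boot} and \eqref{ineq:mU3Boot}, where the extra $\nu^{-1/3}$ would break the $\epsilon\lesssim\nu^{3/2}$ bookkeeping of Proposition \ref{lem:BasicApriori}. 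The missing ingredient is Lemma \ref{lem:postmultED}: the paper bounds the low-frequency factor using $1\lesssim\nu^{-1/6}\sqrt{-\dot M^2M^2}+\nu^{1/3}\abs{\grad_L}$, which splits it into a $\sqrt{-\dot M M}\,m^{\alpha}\Delta_L P$ piece absorbable into the left-hand side (this absorption is where the $\epsilon\nu^{-3/2}$ smallness is actually used, not only in $\mathcal{E}_3$) and a $\grad_L$ piece carrying the gain $\nu^{1/3}\cdot\nu^{1/6}=\nu^{1/2}$; your crude $1\lesssim\abs{\grad_L}$ forfeits exactly this $\nu^{1/2}$, and your explanation that the $\grad_L$ term is forced by a half-derivative commutator deficit on $C$ is not how it arises.

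A secondary problem: treating $\mathcal{E}_3$ ``identically to $\mathcal{E}_1,\mathcal{E}_2$'' after expanding $\Delta_t C$ places the $m$ and $\dot M$ commutator losses on top of $\Delta C$ in the $HL$ piece, requiring $\grad C\in H^{s+1+\min(1,2\alpha)}$, again unavailable at $s=N$. The paper instead commutes no $m$ there: it measures $\grad^2 C$ in $H^{s}$ only, and upgrades the other factor via \eqref{ineq:mDelTrick} (so that $\grad_L P$ is controlled by $m^{1/2}\Delta_L P$) together with the lower bound \eqref{mlowbound}, before applying Lemma \ref{lem:postmultED} once more. So the proposal needs both of these repairs before it establishes the stated inequality.
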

\begin{proof} 
Writing $P = \Delta_t^{-1}\phi$, applying the multiplier $\sqrt{-\dot{M}^i M^i}\jap{D}^s m^\alpha$ to both sides of the equation, and taking $L^2$ norms, gives 
\begin{align}
\norm{\sqrt{-\dot{M}^i M^i} m^\alpha \Delta_L \Delta_t^{-1} \phi_{\neq}}_{H^s} & \lesssim \norm{\sqrt{-\dot{M} M} m^\alpha \phi_{\neq}}_{H^s} + \sum_{j = 1}^3 \mathcal{E}_j, \label{ineq:MdotPELSetup}
\end{align}
where
\begin{align*} 
\mathcal{E}_1 = \| \sqrt{-\dot{M} M} m^\alpha G\partial_{Y}^L\partial_{Y}^LP \|_{H^s}, \quad \mathcal{E}_2 = \| \sqrt{-\dot{M} M}  m^\alpha \psi_z \partial_{Y}^L\partial_{Z}P \|_{H^s}, \quad \mathcal{E}_3 = \| \sqrt{-\dot{M} M} m^\alpha  \Delta_t C \partial_Y^L P \|_{H^s}.
\end{align*} 
Similar to the arguments employed in the other elliptic lemmas, via a paraproduct decomposition, Lemma \ref{lem:mcomm}, and Lemma \ref{lem:dotMcomm}, we get
\begin{align*}
\mathcal{E}_{1} + \mathcal{E}_{2} & \lesssim \nu^{-\max(0,2\alpha-1)/3}\norm{\grad C}_{H^{5/2 + \min(1,2\alpha)+}}\norm{\sqrt{-\dot{M} M} m^\alpha \Delta_L P}_{H^s} \\ & \quad + \nu^{-\max(0,2\alpha-1)/3}\norm{\grad C}_{H^{s+\min(1,2\alpha)}}\norm{m^{\alpha} \Delta_L P}_{H^{3/2+}}.  
\end{align*}
However, by Lemma \ref{lem:postmultED} we have 
\begin{align*}
\norm{m^{\alpha} \Delta_L P}_{H^{3/2+}} & \lesssim \nu^{-1/6}\left(\norm{\sqrt{-\dot{M} M}m^{\alpha} \Delta_L P}_{H^s} + \nu^{1/2}\norm{\grad_L m^{\alpha} \Delta_L P}_{H^s}\right), 
\end{align*}
which implies (along with $N \geq \max(s,5/2+)$), 
\begin{align*}
\mathcal{E}_{1} + \mathcal{E}_{2} \lesssim (\epsilon \nu^{-3/2}) \norm{\sqrt{-\dot{M} M} m^\alpha \Delta_L P}_{H^s} + (\epsilon \nu^{-3/2})\nu^{1/2}\norm{\grad_L m^{\alpha} \Delta_L P}_{H^s}. 
\end{align*}
For $\epsilon \nu^{-3/2}$ sufficiently small, the first term is absorbed in the LHS of \eqref{ineq:MdotPELSetup} whereas the latter term is consistent with \eqref{ineq:dotMPEL}.  

Consider next the error term $\mathcal{E}_3$, which by a paraproduct decomposition, Lemma \ref{lem:postmultED}, \eqref{ineq:mDelTrick}, and the lower bound on $m$,  
\begin{align*}
\mathcal{E}_3 & \lesssim \norm{\Delta C}_{H^{5/2 +}} \norm{\sqrt{-\dot{M} M} \grad_L P}_{H^s} + \norm{\grad^2 C}_{H^{s}}\norm{\grad_L P}_{3/2+} \\ 
& \lesssim \nu^{-\max(0,2\alpha-1)/3}\norm{C}_{H^{N+2}} \norm{\sqrt{-\dot{M} M} m^{\alpha} \Delta_L P}_{H^s} + \nu^{-\max(0,2\alpha-1)/3} \norm{\grad C}_{H^{s+1}}\norm{m^\alpha \Delta_L P}_{3/2+},
\end{align*} 
from which the result follows in the same way as for $\mathcal{E}_{1,2}$. 
\end{proof} 

\bibliographystyle{abbrv} 
\bibliography{eulereqns_vlad}

\end{document}